 \newtheorem{defn}{{\bf Definition}}[section]
 \newtheorem{eg}[defn]{{\bf Example}}
 \newtheorem{lemma}[defn]{{\bf Lemma}}
 \newtheorem{prop}[defn]{{\bf Proposition}}
 \newtheorem{theo}[defn]{{\bf Theorem}}
 \newtheorem{cor}[defn]{{\bf Corollary}}
 \newtheorem{remark}[defn]{{\bf Remark}}
 \newtheorem{conj}[defn]{{\bf Conjecture}}
\newcommand{\TPSS}{S^{\hspace{.2mm}3} \mbox{$\times
\hspace{-2.8mm}_{-}$} \, S^{\hspace{.1mm}1}}
\newcommand{\TPSSD}{S^{\hspace{.2mm}d-1} \mbox{$\times
\hspace{-2.8mm}_{-}$} \, S^{\hspace{.1mm}1}}
 \newcommand{\TPPSS}{\kern.24em \rule width.08em height1.5ex
depth-.08ex \kern-.36em \times}
\newcommand{\lk}[2]{{\rm lk}_{#1}(#2)} \newcommand{\st}[2]{{\rm
st}_{#1}(#2)}
\newcommand{\Kd}{{\mathcal{K}}(d)}
\newcommand{\lKd}{\overline{\mathcal{K}}(d)}
\begin{document}

\title{An infinite family of tight triangulations of manifolds}
\author{Basudeb Datta and Nitin Singh}

\date{}

\maketitle

\vspace{-10mm}
\begin{center}

\noindent {\small Department of Mathematics, Indian Institute of Science, Bangalore 560\,012, India.$^1$}


\footnotetext[1]{{\em E-mail addresses:} dattab@math.iisc.ernet.in (B. Datta), nitin@math.iisc.ernet.in (N.
Singh).}

\medskip

\date{Revised: June 11, 2013}
\end{center}


\hrule

\begin{abstract}
We give explicit construction of vertex-transitive tight triangulations of $d$-manifolds for $d\geq 2$. More
explicitly, for each $d\geq 2$, we construct two $(d^{\hspace{.3mm}2}+5d+5)$-vertex neighborly triangulated
$d$-manifolds whose vertex-links are stacked spheres. The only other non-trivial series of such tight
triangulated manifolds currently known is the series of non-simply connected triangulated $d$-manifolds with
$2d+3$ vertices constructed by K\"{u}hnel.  The manifolds we construct are strongly minimal. For $d\geq 3$, they
are also tight neighborly as defined by Lutz, Sulanke and Swartz. Like K\"{u}hnel's complexes, our manifolds are
orientable in even dimensions and non-orientable in odd dimensions.
\end{abstract}


\noindent {\small {\em MSC 2000\,:} 57Q15, 57R05.

\noindent {\em Keywords:} Stacked sphere; Tight triangulation; Strongly minimal triangulation.

}

\medskip

\hrule

\section{Introduction}

In \cite{wa}, Walkup introduced the class $\Kd$, $d\geq 2$, of simplicial complexes whose vertex-links are
stacked $(d-1)$-spheres. So, a member of Walkup's class $\Kd$ is a triangulated $d$-manifold for $d\geq 2$ and
any triangulated $2$-manifold is a member of ${\mathcal K}(2)$. The following result by Kalai \cite{ka} shows
that the members of this class triangulate a very natural class of manifolds obtained by handle additions on a
sphere.

\begin{prop}[Kalai]\label{prop:kalai}
For $d\geq 4$, a connected simplicial complex $X$ is in ${\mathcal K}(d)$ if and only if $X$ is obtained from a
stacked $d$-sphere by $\beta_1(X)$ combinatorial handle additions. In consequence, any such $X$ triangulates
either $(S^{\hspace{.1mm}d -1}\!\times S^1)^{\# \beta_1}$  or $(\TPSSD)^{\#\beta_1}$ according to whether $X$ is
orientable or not. $($Here $\beta_1 = \beta_1(X) = \beta_1(X;\mathbb{Z}_2).)$
\end{prop}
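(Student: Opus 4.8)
The plan is to treat the two implications separately; the forward direction is an inductive construction, while essentially all of the difficulty is concentrated in the converse, which rests on rigidity theory and the Lower Bound Theorem.

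For the ``if'' direction I would argue by induction on the number of handles. The base case is that a stacked $d$-sphere lies in $\Kd$, which is immediate from the recursive definition of stacking: the link of any vertex of a stacked sphere is again a stacked $(d-1)$-sphere. For the inductive step one shows that a single combinatorial handle addition --- deleting two vertex-disjoint facets $\sigma,\tau$ and identifying their boundaries by an admissible simplicial bijection --- preserves membership in $\Kd$. Only the links of the identified vertices are altered, and each new link is realized as a connected sum of the two old (stacked) links across a facet, hence is again a stacked sphere. Since each handle addition raises $\beta_1$ by $1$ and performs the surgery converting the sphere locally into a sphere bundle over $S^1$, after $\beta_1(X)$ additions one obtains $(S^{\hspace{.1mm}d-1}\!\times S^1)^{\#\beta_1}$ or $(\TPSSD)^{\#\beta_1}$, the orientability being dictated by whether the gluing bijection is orientation-reversing or orientation-preserving on the identified boundaries.

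For the ``only if'' direction the key numerical invariant is $g_2(X)=f_1-(d+1)f_0+\binom{d+2}{2}$. I would first establish, for $X\in\Kd$, the exact identity $g_2(X)=\binom{d+2}{2}\,\beta_1(X)$: the inequality $g_2\geq\binom{d+2}{2}\beta_1$ is the Lower Bound Theorem for triangulated manifolds, and the constraint that every vertex-link is a stacked sphere (each attaining its own bound $g_2=0$) forces equality, via summation of the link relations together with a Dehn--Sommerville / double-counting argument. The engine behind both the inequality and its equality analysis is Kalai's rigidity theorem: the $1$-skeleton of a triangulated $d$-manifold, placed generically in $\mathbb{R}^{d+1}$, is infinitesimally rigid, so its space of self-stresses has dimension exactly $g_2$. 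This is where the hypothesis $d\geq 4$ is genuinely used.

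The core of the converse, and the step I expect to be the main obstacle, is a handle-removal induction driven by the equality $g_2=\binom{d+2}{2}\beta_1$. One must show that when $\beta_1(X)>0$ the extremal stress count produces an admissible pair of facets along which the handle addition can be reversed, yielding a smaller complex $X'\in\Kd$ with $\beta_1(X')=\beta_1(X)-1$ and $g_2(X')=g_2(X)-\binom{d+2}{2}$. The difficulty is that this surgery must be shown to exist and to remain within $\Kd$ while simultaneously controlling the local link condition and the global first homology; this is exactly the delicate combinatorial-topological content of Kalai's argument. Iterating down to $\beta_1=0$ leaves a complex with $g_2=0$, which by the equality case of the classical Lower Bound Theorem is a stacked $d$-sphere; reassembling the removed handles exhibits $X$ as a stacked $d$-sphere with $\beta_1(X)$ combinatorial handle additions, and the homeomorphism classification follows from the forward analysis run in reverse.
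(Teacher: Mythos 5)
The paper does not prove this statement: Proposition \ref{prop:kalai} is quoted from Kalai \cite{ka} as a known result, so there is no in-paper argument to compare yours against line by line. Judged on its own, your forward direction is essentially the standard one and is sound, with one caveat you should make explicit: a bare combinatorial handle addition need not stay inside ${\mathcal K}(d)$. You need the identification $\psi$ to be admissible (no identified pair of vertices with a common neighbour) for each new vertex link to be an honest connected sum of two stacked spheres along a facet --- this is exactly the fact the paper records from \cite{bd9} in Section 2.

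The converse is where your sketch has a genuine gap, and I believe it also points at the wrong engine. You propose a ``handle-removal induction driven by the equality $g_2=\binom{d+2}{2}\beta_1$'' and concede that producing the admissible pair of facets to cut along is ``the main obstacle''; but nothing in the rigidity/stress count localises such a pair, and you give no mechanism by which it would. The actual mechanism in Kalai's argument (and in the Bagchi--Datta reformulation the paper uses, Proposition \ref{prop:bagchidatta}) is local and combinatorial rather than rigidity-theoretic: for $d\geq 4$ each vertex link is a stacked $(d-1)$-sphere with $d-1\geq 3$, and such a sphere bounds a \emph{unique} stacked $d$-ball, whose faces are precisely the cliques of the link's edge graph. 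Gluing these canonical fillings over all vertices produces a $(d+1)$-complex $\overline{X}\in\overline{\mathcal K}(d+1)$ with $\partial\overline{X}=X$; its dual graph $\Lambda(\overline{X})$ is connected with first Betti number $\beta_1(X)$, and cutting $\overline{X}$ open along the interior $d$-faces dual to the edges outside a spanning tree of $\Lambda(\overline{X})$ yields a complex whose dual graph is a tree with $f_0=f_{d+1}+d+1$, hence a stacked $(d+1)$-ball by the criterion of Lemma \ref{lemma:stackedball}; its boundary is the stacked $d$-sphere from which $X$ is recovered by $\beta_1(X)$ handle additions. This is also where the hypothesis $d\geq 4$ genuinely enters: uniqueness of the stacked ball filling fails for stacked $2$-spheres, which is why ${\mathcal K}(3)$ is exceptional (Proposition \ref{prop:tight3}). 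The rigidity inequality you invoke is the content of Proposition \ref{prop:novik-swartz}, not of this proposition, and is not needed here. So in the converse direction your outline needs to be replaced, not merely completed.
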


Walkup's class $\Kd$ has also been a major source of examples of tight triangulations. Recall that, for a field
$\mathbb{F}$, a $d$-dimensional simplicial complex $X$ is called {\em tight with respect to $\mathbb{F}$} (or
{\em $\mathbb{F}$-tight}) if {\rm (i)} $X$ is connected, and {\rm (ii)} for all induced sub-complexes $Y$ of $X$
and for all $0\leq j\leq d$, the morphism $H_j(Y;\mathbb{F})\rightarrow H_j(X;\mathbb{F})$ induced by the
inclusion map $Y\hookrightarrow X$ is injective \cite{ku95, bd16}. In this paper, by tight we mean tight with
respect to the field $\mathbb{Z}_2$.

Very few examples of tight triangulations are known. Apart from the trivial $(d+2)$-vertex triangulation
$S^{\hspace{.1mm}d}_{d+2}$ of the $d$-sphere $S^{\hspace{.1mm}d}$, the only non-trivial series of such
triangulations currently known is the $(2d+3)$-vertex non-simply connected triangulated manifolds
$K^{\hspace{.1mm}d}_{2d+3}$ constructed by K\"{u}hnel \cite{ku86}. The complex $K^{\hspace{.1mm}d}_{2d+3}$
triangulates an $S^{\hspace{.1mm}d-1}$-bundle over $S^1$. Not surprisingly, K\"{u}hnel's triangulations are
members of $\Kd$. Walkup's class also relates to one of the few combinatorial criteria for tightness that are
known (for more general combinatorial criteria see \cite[Theorem 3.10]{bd16}). For example, Effenberger \cite{ef}
showed that:

\begin{prop}[Effenberger]\label{prop:ef}
For $d\neq 3$, the neighborly members of $\Kd$ are tight.
\end{prop}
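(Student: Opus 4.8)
The plan is to verify the defining condition of $\mathbb{Z}_2$-tightness directly: writing $X$ for a neighborly member of $\Kd$, I must show that for every induced subcomplex $Y = X[W]$ and every $0 \le j \le d$ the inclusion-induced map $H_j(Y;\mathbb{Z}_2) \to H_j(X;\mathbb{Z}_2)$ is injective. Since the hypothesis is $d \neq 3$, I would treat $d = 2$ and $d \ge 4$ separately. First I would dispose of the two extreme degrees, which require no structure beyond being a neighborly manifold. For $j = 0$: neighborliness means the $1$-skeleton of $X$ is complete, so every induced subcomplex on a nonempty vertex set again has complete $1$-skeleton and is therefore connected, whence $H_0(Y) \to H_0(X)$ is an isomorphism onto. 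For $j = d$: $X$ is a closed connected $\mathbb{Z}_2$-homology $d$-manifold, so $H_d(X;\mathbb{Z}_2) = \mathbb{Z}_2$; a proper induced subcomplex $Y \neq X$ omits a vertex and hence omits every facet through it, and since each ridge of a closed manifold lies in exactly two facets, any nonzero $\mathbb{Z}_2$ top-cycle must use every facet, forcing $H_d(Y;\mathbb{Z}_2) = 0$.

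Next I would cut the problem down to a single essential degree. By $\mathbb{Z}_2$-Poincar\'e--Lefschetz duality on the homology manifold $X$ (relating the homology of the subcomplex on $W$ to that on the complementary vertex set), the tightness condition in degree $j$ is equivalent to the one in degree $d-j$; thus it suffices to treat $0 \le j \le \lfloor d/2\rfloor$. For $d = 2$ this leaves only $j=1$, and a neighborly triangulated surface is tight by the classical argument, so this serves as the base case. For $d \ge 4$ I would invoke Kalai's theorem (Proposition \ref{prop:kalai}): $X$ triangulates a connected sum of copies of $S^{\hspace{.1mm}d-1}\!\times S^1$ or of $\TPSSD$, whose $\mathbb{Z}_2$-homology is concentrated in degrees $0,1,d-1,d$. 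Hence $H_j(X;\mathbb{Z}_2)=0$ for $2 \le j \le d-2$, so in each intermediate degree the tightness condition reduces to the \emph{vanishing} $H_j(Y;\mathbb{Z}_2)=0$, and after duality the only condition with nonzero target that remains to be checked is degree $j=1$ (equivalently $j=d-1$).

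The core of the argument is therefore to control $H_*(Y;\mathbb{Z}_2)$ for an arbitrary induced $Y=X[W]$, and for this I would build $Y$ by adjoining the vertices of $W$ one at a time, tracking homology through the Mayer--Vietoris sequences. Adding a vertex $v$ to a partial induced subcomplex $Z$ attaches the cone over the induced subcomplex $\lk{X}{v}[W']$ of the vertex-link, and the crucial input is that every such $\lk{X}{v}$ is a \emph{stacked} $(d-1)$-sphere, which bounds a stacked ball inside the manifold. I would use this to show that the homology classes contributed by the $\lk{X}{v}[W']$ are precisely those already bounding in $X$: the intermediate reduced homology of $Y$ is killed, and any $1$-cycle surviving in $Y$ survives in $X$, giving the degree-$1$ injectivity. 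Note that the stacked links themselves do carry intermediate homology in induced subcomplexes (for instance the ``equator'' of a bipyramid is an induced cycle), so the content is genuinely about the injectivity of the map, not about vanishing of the source.

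The main obstacle I expect is exactly this degree-$1$ (equivalently degree-$(d-1)$) injectivity: one must rule out an induced subcomplex carrying a $1$-cycle that bounds in $X$ but not in $Y$, and the stacked structure of the links is used in an essential, non-formal way to prevent such ``short-circuiting'' cycles. This is also where the hypothesis $d \neq 3$ is forced: for $d=3$ Kalai's structural theorem (Proposition \ref{prop:kalai}) is unavailable, the homology of a member of $\mathcal{K}(3)$ need not be concentrated in the extreme degrees, and the degree-$1$ and degree-$2$ conditions no longer decouple under duality, so the reduction carried out above collapses and the argument genuinely breaks down.
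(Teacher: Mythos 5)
This proposition is not proved in the paper at all: it is quoted from Effenberger's article \cite{ef} as a known result, so the only meaningful comparison is with Effenberger's published argument, not with anything in this text. Measured against that, your proposal is a plan rather than a proof, and the plan stops exactly where the theorem begins. The outer layers are fine: degree $0$ from neighborliness, degree $d$ from the fact that a proper induced subcomplex misses a facet while any nonzero top $\mathbb{Z}_2$-cycle of a closed pseudomanifold must use all of them, and the use of Proposition \ref{prop:kalai} to concentrate $H_*(X;\mathbb{Z}_2)$ in degrees $0,1,d-1,d$ for $d\geq 4$. But the entire content of Effenberger's theorem is the step you describe as ``I would use this to show that the homology classes contributed by the links are precisely those already bounding in $X$.'' You correctly identify that the stacked structure of the vertex links must enter in a non-formal way to kill middle-dimensional homology of induced subcomplexes and to prevent short-circuiting $1$-cycles, but you give no inductive invariant for the vertex-by-vertex Mayer--Vietoris scheme and no argument that actually exploits stackedness (Effenberger's proof runs through a careful analysis of induced subcomplexes of stacked spheres and of the handle decomposition provided by Kalai's theorem). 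As it stands, the core claim is asserted, not proved.

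There is also a concrete technical error in the reduction. The duality you invoke is off by one: for a closed connected $\mathbb{Z}_2$-homology $d$-manifold $X$, the long exact sequence of the pair $(X,Y)$ together with Alexander--Lefschetz duality identifies $H_{j+1}(X,Y;\mathbb{Z}_2)$ with $H^{d-j-1}(Y^c;\mathbb{Z}_2)$, where $Y^c$ is the subcomplex induced on the complementary vertex set; hence the degree-$j$ tightness condition (for all $Y$) is equivalent to the degree-$(d-1-j)$ condition (for all $Y^c$), not the degree-$(d-j)$ condition. This is not a cosmetic slip. With the correct pairing, the $d=2$ case is immediate (degree $1$ pairs with degree $0$, i.e., with connectivity of complements, which is the classical argument for neighborly surfaces), and for $d\geq 4$ the degree-$1$ injectivity pairs with the degree-$(d-2)$ condition, whose target $H_{d-2}(X;\mathbb{Z}_2)$ vanishes; so the whole theorem reduces to proving $H_j(Y;\mathbb{Z}_2)=0$ for all induced $Y$ and all $2\leq j\leq d-2$. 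That vanishing statement is where all the work lies, and it is precisely what your proposal does not supply. Your diagnosis of why $d=3$ is excluded is essentially right in spirit (the degree-$1$ condition becomes self-dual with nonzero target and Kalai's structure theorem is unavailable), but it too should be phrased with the $j\leftrightarrow d-1-j$ pairing.
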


Analogous to Walkup's class $\Kd$, let $\overline{{\mathcal K}}(d)$ be the class of all simplicial complexes
whose vertex-links are stacked $(d-1)$-balls. So, a member of $\lKd$ is a triangulated $d$-manifold with
boundary. Recently, Bagchi and Datta \cite{bd17} proved

\begin{prop}[Bagchi and Datta]\label{prop:tight3}
If $M$ is a neighborly member of ${\mathcal K}(3)$ then the following are equivalent. {\rm (i)} $M$ is tight,
{\rm (ii)} $M$ is the boundary of a neighborly member of $\overline{{\mathcal K}}(4)$, and {\rm (iii)}
$\beta_1(M; \mathbb{Z}_2)= (f_0(M)-4)(f_0(M)- 5)/20$.
\end{prop}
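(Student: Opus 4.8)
The plan is to first recast the numerical condition (iii) in a usable form and then establish the two equivalences (i)$\Leftrightarrow$(iii) and (ii)$\Leftrightarrow$(iii), which together give the result. Writing $n=f_0(M)$ and $g_2(M)=f_1(M)-4f_0(M)+10$, the Dehn--Sommerville relations for a closed $3$-manifold give $f_3=f_1-f_0$ and $f_2=2(f_1-f_0)$, while neighborliness gives $f_1=\binom{n}{2}$. A short computation then yields $g_2(M)=(n-4)(n-5)/2$, so that (iii) is equivalent to the single identity $g_2(M)=10\,\beta_1(M)$ (with $10=\binom{d+2}{2}$ for $d=3$). Since the generalized lower bound theorem for triangulated $3$-manifolds gives $g_2\geq 10\,\beta_1$ unconditionally, condition (iii) says exactly that $M$ is extremal for this bound. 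The structural input I will rely on is the equality case: a connected member of $\mathcal{K}(3)$ satisfies $g_2=10\,\beta_1$ if and only if it is obtained from a stacked $3$-sphere by $\beta_1$ combinatorial handle additions. One direction is bookkeeping (each handle addition raises $g_2$ by $10$ and $\beta_1$ by $1$, starting from $g_2=\beta_1=0$ for a stacked sphere); the reverse is the $d=3$ analogue of Kalai's theorem (Proposition \ref{prop:kalai}), and since Kalai's rigidity argument needs $d\geq 4$, this is the step I expect to be the main obstacle.

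Granting this structural description, (iii)$\Rightarrow$(ii) is constructive. Realize $M=S_0/\!\sim$, where $S_0$ is a stacked $3$-sphere and $\sim$ identifies $\beta_1$ disjoint pairs of facets. Fill $S_0$ by the stacked $4$-ball $B_0$ with $\partial B_0=S_0$ and no interior vertices, and carry out on $B_0$ the matching handle additions: each identified pair of tetrahedra of $S_0$ consists of boundary facets of $B_0$, and identifying them realizes a combinatorial $1$-handle. The result $N=B_0/\!\sim$ is a triangulated $4$-manifold with $\partial N=S_0/\!\sim=M$, and its vertex-links are stacked $3$-balls: an interior vertex keeps the stacked $3$-sphere link of $B_0$, while a vertex at which two boundary links are amalgamated acquires the union of two stacked $3$-balls along a boundary tetrahedron, which is again a stacked $3$-ball. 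Hence $N\in\overline{\mathcal{K}}(4)$. Finally $V(N)=V(M)$ and $M=\partial N\subseteq N$, so the complete $1$-skeleton of $M$ forces $N$ to be neighborly, giving (ii).

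For (ii)$\Rightarrow$(iii) I would run this in reverse: a connected neighborly $N\in\overline{\mathcal{K}}(4)$ is, by the relative (ball) companion of Proposition \ref{prop:kalai}, obtained from a stacked $4$-ball by some number $h$ of handle additions, so that $\partial N=M$ is a stacked $3$-sphere with $h$ handle additions; this gives $\beta_1(M)=h$ and $g_2(M)=10h$, i.e.\ (iii). There remains (i)$\Leftrightarrow$(iii), which I would obtain from the combinatorial criterion for $\mathbb{Z}_2$-tightness of \cite{bd16}: for a member of $\mathcal{K}(3)$, once $H_0$-tightness is guaranteed by neighborliness, $\mathbb{Z}_2$-tightness reduces to injectivity of $H_1(Y;\mathbb{Z}_2)\to H_1(M;\mathbb{Z}_2)$ for all induced subcomplexes $Y$, and for a complex built from a stacked $3$-sphere by handle additions this is precisely the extremal identity $g_2=10\,\beta_1$. (This is exactly the phenomenon that distinguishes $d=3$ from the situation in Proposition \ref{prop:ef}: for $d\geq 4$ every connected member of $\mathcal{K}(d)$ already satisfies the extremal identity by Proposition \ref{prop:kalai}, so neighborliness alone forces tightness, whereas for $d=3$ the identity is a genuine extra hypothesis.)

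In summary, the whole argument rests on two structural theorems of Walkup-class type --- the $d=3$ equality case of the generalized lower bound theorem and its relative version for $\overline{\mathcal{K}}(4)$ --- together with the tightness criterion of \cite{bd16}; the filling construction and the Dehn--Sommerville reduction are routine. I expect the genuine difficulty to be isolated entirely in the $d=3$ equality case, where the rigidity-theoretic tools available for $d\geq 4$ do not apply and a more hands-on analysis of how handle additions account for all of $g_2$ and all of $\beta_1$ will be needed.
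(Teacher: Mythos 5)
First, a point of reference: the paper never proves Proposition \ref{prop:tight3}; it is quoted verbatim from \cite{bd17}, so there is no internal proof to compare your attempt against. Judged on its own terms, your proposal gets the easy reductions right --- the Dehn--Sommerville/neighborliness computation showing that (iii) is equivalent to the extremal identity $g_2(M)=10\,\beta_1(M)$ is correct, and the filling construction (cone a handle decomposition of $M$ up to a stacked $4$-ball with the matching identifications, then check links via Lemma \ref{lemma:stackedball}) is the right picture for passing between a handle decomposition and condition (ii), modulo checking admissibility of the identifications so that the quotients remain simplicial complexes. But both load-bearing steps are asserted rather than proved, and they \emph{are} the theorem. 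The claim that a connected member of ${\mathcal K}(3)$ with $g_2=10\beta_1$ is obtained from a stacked $3$-sphere by handle additions is exactly the $d=3$ equality case where the rigidity arguments behind Proposition \ref{prop:kalai} and Proposition \ref{prop:novik-swartz}(b) are unavailable; you flag it as ``the main obstacle'' and then use it anyway, and you invoke an equally unproved relative version for $\overline{\mathcal K}(4)$ in your (ii)$\Rightarrow$(iii) direction. That for $d=3$ such decompositions are genuinely not automatic is visible in this very paper: Lemma \ref{lemma:|(M^3_29|} has to exhibit the handle decomposition of $M^3_{29}$ by hand precisely because Kalai's theorem does not apply. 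In \cite{bd17} the proposition is established by the $\sigma$- and $\mu$-vector machinery of \cite{bd16}, not by a $d=3$ rigidity argument.

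The step (i)$\Leftrightarrow$(iii) is also logically muddled. For \emph{any} complex obtained from a stacked $3$-sphere by handle additions the identity $g_2=10\beta_1$ holds automatically, so it cannot simultaneously be the criterion that singles out the tight ones among them. What you actually need are two separate facts, neither supplied: that a \emph{neighborly} handle-decomposable member of ${\mathcal K}(3)$ is tight (the case deliberately excluded from Proposition \ref{prop:ef}, which is where \cite[Theorem 3.10]{bd16} really enters), and that tightness forces the extremal identity (a \emph{necessary} condition, which in \cite{bd16,bd17} comes from the inequalities $\mu_i\geq\beta_i$ and their saturation under tightness --- a sufficiency criterion for tightness cannot deliver this direction). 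You also reduce $\mathbb{Z}_2$-tightness of a $3$-manifold to $H_1$-injectivity without comment; the $H_2$ case needs at least a duality argument. In short: the reductions and the filling construction are sound, but the two structural theorems doing all the work are assumed, not proved.
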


Walkup's class is also closely related to the notion of tight
neighborly triangulation as introduced by Lutz, Sulanke and Swartz
in \cite{lss}. In particular, we have the following:

\begin{prop}[Novik and Swartz]\label{prop:novik-swartz}
Let $X$ be a connected triangulated $d$-manifold.  \begin{enumerate} \item[$(a)$] If $d\geq 3$ then
$\binom{d+2}{2}\beta_1(X; \mathbb{Z}_2) \leq f_1(X)-(d+1)f_0(X)+\binom{d+2}{2}$. \item[$(b)$] Further, if \,
$\binom{d+2}{2}\beta_1(X; \mathbb{Z}_2) = f_1(X)-(d+1)f_0(X)+\binom{d+2}{2}$ and $d\geq 4$ then $X\in {\mathcal
K}(d)$.
\end{enumerate}
\end{prop}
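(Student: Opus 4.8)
The plan is to read the right-hand side as the well-known invariant $g_2(X)=h_2(X)-h_1(X)=f_1-(d+1)f_0+\binom{d+2}{2}$, so that part $(a)$ is exactly the Betti-refined Lower Bound Theorem $g_2(X)\ge\binom{d+2}{2}\,\beta_1(X;\mathbb{Z}_2)$. Two preliminary observations fix the shape of the argument. First, a direct count shows that a single combinatorial handle addition changes $(f_0,f_1)$ by $(-(d+1),-\binom{d+1}{2})$, hence raises $g_2$ by exactly $-\binom{d+1}{2}+(d+1)^2=\binom{d+2}{2}$ while raising $\beta_1$ by $1$; this pins down the constant and shows that the members of $\Kd$ (stacked spheres with handles, by Proposition~\ref{prop:kalai}) realize equality. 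Second, the number $\binom{d+2}{2}=\binom{d+1}{2}+(d+1)$ is the dimension of the group of rigid motions of $\mathbb{R}^{d+1}$, which is the structural hint that each homology handle should be worth one full copy of that group's worth of stresses.

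The engine I would use is rigidity. By Kalai's theorem (extended by Fogelsanger), the $1$-skeleton of a connected triangulated $d$-manifold is generically $(d+1)$-rigid, so for a generic placement of the vertices in $\mathbb{R}^{d+1}$ the space of self-stresses has dimension precisely $g_2(X)$. Part $(a)$ then amounts to exhibiting $\binom{d+2}{2}\,\beta_1$ linearly independent self-stresses forced by the topology. Here I would pass to the face ring $\mathbb{Z}_2[X]$, which for a manifold is Buchsbaum of Krull dimension $d+1$; choosing a generic linear system of parameters $\theta$ and setting $A=\mathbb{Z}_2[X]/(\theta)$, Schenzel's formula gives $\dim_{\mathbb{Z}_2}A_1=h_1$ and $\dim_{\mathbb{Z}_2}A_2=h_2+\binom{d+1}{2}\beta_1$, isolating the topological contribution and tying the stress space to the socle of $A$. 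The hard part — and the step I expect to be the main obstacle — is showing that Poincar\'e duality actually produces these stresses and that they are independent: this is exactly where manifoldness (and not mere pseudomanifoldness) is used, through the Novik--Swartz analysis of the socle and local cohomology of $A$. It is also where $d\ge 3$ is essential, since for $d=2$ the same count would assert $6\beta_1\le g_2$, which already fails for surfaces of positive genus.

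For part $(b)$ I would study the equality case by localizing to vertex links. If $g_2(X)=\binom{d+2}{2}\beta_1$, then no stress is left unaccounted for by the homology, and I would propagate this extremality to the vertex stars and links via the same rigidity estimate to conclude that $g_2(\lk{X}{v})=0$ for every vertex $v$. Each such link is a $\mathbb{Z}_2$-homology $(d-1)$-sphere, and by the equality case of the Lower Bound Theorem a homology sphere of dimension at least $3$ with $g_2=0$ is a stacked sphere; hence every vertex link is a stacked $(d-1)$-sphere, that is, $X\in\Kd$ by definition. The requirement that the link dimension $d-1$ be at least $3$ is precisely the origin of the hypothesis $d\ge 4$ in $(b)$. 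The gap I expect to have to fill carefully is this propagation step: deriving $g_2(\lk{X}{v})=0$ for all $v$ from the single global equality, for which I would use the standard relation between the stress space of $X$ and those of its vertex stars, leaving no slack to distribute among the links.
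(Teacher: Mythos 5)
First, a point about the comparison you are being asked to survive: the paper contains \emph{no proof} of this proposition. It is imported as a black box from Novik and Swartz \cite{ns09} (only its consequence, Corollary \ref{prop:lss}, is actually used in the proof of Theorem \ref{thm:main}). So the benchmark is the Novik--Swartz paper itself, and measured against that your sketch reproduces the correct architecture: the reading of the right-hand side as $g_2$, the (correct) computation that a combinatorial handle addition raises $g_2$ by exactly $\binom{d+2}{2}$ and $\beta_1$ by $1$, the use of Kalai--Fogelsanger generic $(d+1)$-rigidity to realize $g_2$ as the dimension of a stress space, the passage to the Buchsbaum face ring and its Artinian reduction $A$, and, for part (b), the reduction to $g_2({\rm lk}_X(v))=0$ for every vertex $v$ followed by Kalai's equality case of the Lower Bound Theorem for homology spheres of dimension $\geq 3$ (which is indeed the source of the hypothesis $d\geq 4$).

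As a proof, however, the proposal has genuine gaps, and they sit exactly at the two places you flag. (1) The one concrete algebraic formula you offer is wrong: Schenzel's formula corrects $\dim_{\mathbb{Z}_2}A_i$ in degree $i$ only by a term involving $\tilde{\beta}_0,\dots,\tilde{\beta}_{i-2}$, so in degree $2$ the correction is a multiple of $\tilde{\beta}_0$, which vanishes for connected $X$; thus $\dim_{\mathbb{Z}_2}A_2=h_2$, and $\beta_1$ does not enter through Schenzel at all. The $\beta_1$-contribution in degree $2$ is the content of the Novik--Swartz socle theorem (the socle of $A$ in degree $2$ contains a subspace of dimension $\binom{d+1}{2}\beta_1$ coming from $H^1$), and even that only accounts for $\binom{d+1}{2}\beta_1$ of the required $\binom{d+2}{2}\beta_1$; the remaining $(d+1)\beta_1$ and the verification that the quotient by the socle submodule still satisfies the rigidity inequality is where orientability/Poincar\'e duality over $\mathbb{Z}_2$ is used, and none of this is supplied. (2) For part (b), the ``propagation'' of the single global equality to $g_2({\rm lk}_X(v))=0$ for all $v$ is not a routine application of the relation between the stress space of $X$ and those of its vertex stars: the standard star-decomposition inequalities do not by themselves distribute the equality to every link, and in Novik--Swartz the equality analysis goes through the algebraic structure of the quotient ring. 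So what you have is an accurate road map of the known proof, with the two load-bearing steps left as acknowledged gaps, plus one misattributed formula.
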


From Propositions \ref{prop:novik-swartz} and \ref{prop:kalai}, one can deduce the following\,:

\begin{cor}[Lutz, Sulanke and Swartz] \label{prop:lss}
Let $X$ be a connected triangulated $d$-manifold. If $d\geq 3$,  then \vspace{-2mm}
\begin{align}
\binom{d+2}{2}\beta_1(X; \mathbb{Z}_2) \leq \binom{f_0(X)-d-1}{2}.
\label{eq:tnbrly}
\end{align}
Moreover for $d\geq 4$, the equality holds if and only if $X$ is a neighborly member of ${\mathcal K}(d)$.
\end{cor}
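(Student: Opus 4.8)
The plan is to read inequality \eqref{eq:tnbrly} as a concatenation of two separate inequalities and to determine exactly when each is sharp. Throughout write $n=f_0(X)$ and $\beta_1=\beta_1(X;\mathbb{Z}_2)$. For the inequality itself (valid for $d\geq 3$), I would start from Proposition~\ref{prop:novik-swartz}(a),
$$\binom{d+2}{2}\beta_1\leq f_1(X)-(d+1)n+\binom{d+2}{2},$$
and then bound the right-hand side using the trivial fact that a simplicial complex carries at most one edge on each pair of vertices, so $f_1(X)\leq\binom{n}{2}$. It remains only to check the elementary identity
$$\binom{n}{2}-(d+1)n+\binom{d+2}{2}=\binom{n-d-1}{2},$$
which holds because both sides equal $\tfrac12\bigl(n^2-(2d+3)n+(d+1)(d+2)\bigr)$. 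Chaining the three relations gives \eqref{eq:tnbrly}.

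For the equality clause with $d\geq 4$, the point is that the derivation above passes through precisely two inequalities: the Novik--Swartz bound (a) and the edge bound $f_1(X)\leq\binom{n}{2}$. Hence equality in \eqref{eq:tnbrly} forces equality in both. Equality in the edge bound says every pair of vertices spans an edge, i.e.\ $X$ is neighborly; equality in (a) together with $d\geq 4$ yields $X\in\Kd$ by Proposition~\ref{prop:novik-swartz}(b). This settles the forward direction.

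For the converse I would show that every member of $\Kd$ automatically attains equality in Proposition~\ref{prop:novik-swartz}(a); combined with neighborliness, which supplies $f_1(X)=\binom{n}{2}$, this produces equality in \eqref{eq:tnbrly}. Here I would invoke Kalai's structure theorem (Proposition~\ref{prop:kalai}): a connected $X\in\Kd$ arises from a stacked $d$-sphere by $\beta_1$ combinatorial handle additions. A stacked $d$-sphere has $\beta_1=0$ and, by counting the $d+1$ edges added at each stacking step, satisfies $f_1=(d+1)n-\binom{d+2}{2}$, which is equality in (a). Each combinatorial handle addition identifies two vertex-disjoint facets and so changes $(f_0,f_1,\beta_1)$ by $(-(d+1),-\binom{d+1}{2},+1)$; since $-\binom{d+1}{2}+(d+1)^2-\binom{d+2}{2}=0$, the quantity $f_1-(d+1)f_0-\binom{d+2}{2}\beta_1$ is unchanged, so equality in (a) persists through every handle addition and hence holds for $X$.

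The step I expect to be the main obstacle is the bookkeeping just used, namely justifying that a combinatorial handle addition deletes exactly $\binom{d+1}{2}$ edges and creates none. This rests on the defining hypotheses of the move: the two identified facets are vertex-disjoint and far enough apart that only the $\binom{d+1}{2}$ edges internal to the facets are identified in pairs, while all remaining adjacencies stay distinct and the result is again a simplicial complex.
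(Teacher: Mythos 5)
Your proposal is correct and follows essentially the route the paper intends: the paper offers no written proof of this corollary beyond the remark that it follows from Propositions \ref{prop:novik-swartz} and \ref{prop:kalai}, and your argument is exactly that deduction — chaining the Novik--Swartz bound with $f_1(X)\leq\binom{f_0(X)}{2}$ and the binomial identity, then handling equality via neighborliness, part (b) of Proposition \ref{prop:novik-swartz}, and Kalai's structure theorem for the converse. The bookkeeping you flag (a handle addition removes $d+1$ vertices and exactly $\binom{d+1}{2}$ edges, guaranteed by the disjointness and no-common-neighbor conditions) checks out.
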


For $d\geq 3$, a triangulated $d$-manifold is called {\em tight neighborly} if it satisfies (\ref{eq:tnbrly})
with equality.

In this paper, we present the second infinite series of neighborly members of $\Kd$ after K\"{u}hnel's series
$K^{\hspace{.1mm}d}_{2d+3}$. Like K\"{u}hnel's complexes, our manifolds also exhibit vertex-transitive
automorphism groups. They are orientable in even dimensions, non-orientable in odd dimensions. In view of the
above results, it follows that the triangulated $d$-manifolds we construct are tight for $d\geq 2$ and are tight
neighborly for $d\geq 3$. Our examples are also strongly minimal. More explicitly we have

\begin{theo}\label{thm:main}
For $d\geq 2$ and $n=d^{\hspace{.3mm}2}+5d+5$, there exist $n$-vertex non-isomorphic members $M^d_{n}$ and
$N^d_n$ of ${\mathcal K}(d)$ with the following properties.
\begin{enumerate}[{\rm (a)}]
 \item $M^d_{n}$ and $N^d_{n}$ are neighborly for all $d$.
 \item $M^d_{n}$ and $N^d_{n}$ are tight for all $d$.
 \item $M^d_{n}$ and $N^d_{n}$ are tight neighborly for $d\geq 3$.
 \item  $\beta_1(M^d_{n}; \mathbb{Z}_2) = \beta_1(N^d_{n}; \mathbb{Z}_2) =\binom{n-d-1}{2}/\binom{d+2}{2}=
 d^{\hspace{.3mm}2}+5d+6$ for $d\geq 3$.
 \item If $d\geq 2$ is even then $M^d_{n}$ and $N^d_{n}$ triangulate $(S^{\hspace{.1mm}d -1}\!\times
S^1)^{\# \beta}$ and if $d\geq 3$ is odd then $M^d_{n}$ and $N^d_{n}$ triangulate $(\TPSSD)^{\#\beta}$, where
$\beta = d^{\hspace{.3mm}2}+5d+6$.
 \item  $M^d_{n}$ and $N^d_{n}$ are strongly minimal for all $d$.
 \item  $\mathbb{Z}_n$ acts vertex-transitively on $M^d_{n}$ and $N^d_{n}$, respectively for all $d$.
\end{enumerate}
\end{theo}

For $d\geq 3$, apart from the $(d+2)$-vertex standard spheres $S^{\hspace{.1mm}d}_{d+2}$, K\"{u}hnel's complexes
$K^{\hspace{.1mm}d}_{2d+3}$ and a few sporadic examples, our examples $M^d_{d^2+5d+5}$ and $N^d_{d^2+5d+5}$ are
the only known tight neighborly triangulated manifolds (cf. Table 1 in Section 6). For $d\geq 3$,
$K^{\hspace{.1mm}d}_{2d+3}$ is the unique $(2d+3)$-vertex triangulated manifold with $\beta_1 \neq 0$ \cite{bd8,
css}. We pose the following.

\begin{conj} \label{conj}
For $d\geq 3$, if $X$ is a $(d^{\hspace{.3mm}2}+5d+5)$-vertex triangulated $d$-manifold with $\beta_1(X;
\mathbb{Z}_2) = d^{\hspace{.3mm}2}+5d+6$ then $X$ is isomorphic to $M^d_{d^{\hspace{.1mm}2}+5d+5}$ or
$N^d_{d^{\hspace{.1mm}2} +5d+5}$.
\end{conj}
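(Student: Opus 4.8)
The final statement is Conjecture \ref{conj}, not a theorem, so no proof is claimed in the paper. Nevertheless, here is how one would attempt to prove it, together with the reason it is posed as a conjecture rather than established.

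The plan is to work entirely within the rigidity framework of Corollary \ref{prop:lss}. Suppose $X$ is a $(d^2+5d+5)$-vertex triangulated $d$-manifold with $\beta_1(X;\mathbb{Z}_2) = d^2+5d+6$. Setting $f_0(X) = n = d^2+5d+5$, one computes that $\binom{n-d-1}{2}/\binom{d+2}{2} = d^2+5d+6 = \beta_1(X;\mathbb{Z}_2)$, so $X$ attains equality in \eqref{eq:tnbrly}. By the moreover-clause of Corollary \ref{prop:lss}, for $d\geq 4$ this forces $X$ to be a \emph{neighborly} member of $\mathcal{K}(d)$. (For $d=3$ one cannot invoke this clause directly, so the $d=3$ case would have to be handled separately, presumably via Proposition \ref{prop:tight3} after first arguing neighborliness; this is already a point of friction.) Thus the first step, at least for $d\geq 4$, reduces the classification problem to: classify all neighborly members of $\mathcal{K}(d)$ on exactly $n$ vertices with the prescribed first Betti number.

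Next I would exploit Proposition \ref{prop:kalai}. Since $X \in \mathcal{K}(d)$ is connected with $\beta_1 = \beta := d^2+5d+6$, for $d\geq 4$ the complex $X$ is obtained from a stacked $d$-sphere by exactly $\beta$ combinatorial handle additions, and hence triangulates $(S^{d-1}\!\times S^1)^{\#\beta}$ or $(\TPSSD)^{\#\beta}$ according to orientability. The strategy would then be a counting/rigidity argument: a stacked $d$-sphere on $m$ vertices has a controlled face vector, each combinatorial handle addition identifies two facets and removes vertices in a prescribed way, and neighborliness together with the fixed vertex count $n$ severely constrains how the $\beta$ handles can be attached. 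One would try to show that the combinatorial data of the handle additions is rigid enough that, up to isomorphism, only the two assemblies realizing $M^d_n$ and $N^d_n$ are possible. The orientable/non-orientable dichotomy of part (e) of Theorem \ref{thm:main} should match the two candidate manifolds in each parity, so the real content is uniqueness of the combinatorial type within each topological class.

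The hard part, and the reason this remains a conjecture, is precisely this last uniqueness step: the passage from ``$X$ is a neighborly member of $\mathcal{K}(d)$ with the right $f_0$ and $\beta_1$'' to ``$X$ is combinatorially one of exactly two explicit complexes.'' The rigidity inequalities pin down the $f$-vector and the topological type, but they do not obviously pin down the simplicial isomorphism type, since in principle many non-isomorphic neighborly members of $\mathcal{K}(d)$ could share the same face numbers and the same underlying manifold. Proving that no such extra examples exist would require a genuinely combinatorial classification of handle-addition patterns on stacked spheres under the neighborliness constraint, for which no general technique is currently available. The evidence supporting the conjecture is the analogous known uniqueness for K\"uhnel's complexes $K^d_{2d+3}$ as the unique $(2d+3)$-vertex triangulated manifold with $\beta_1 \neq 0$ (cf.\ \cite{bd8, css}), which suggests that the same extremal rigidity phenomenon should persist at the next admissible vertex count $n = d^2+5d+5$; but extending that uniqueness argument from $2d+3$ to $d^2+5d+5$ vertices is exactly the open obstacle.
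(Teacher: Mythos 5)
You are right on the central point: this statement appears in the paper only as Conjecture \ref{conj}, and the paper contains no proof of it, so there is no argument to compare yours against. Your opening reduction is also sound and consistent with the machinery the paper sets up. With $n=d^2+5d+5$ one has $n-d-1=(d+2)^2$, so $\binom{n-d-1}{2}/\binom{d+2}{2}=(d+2)(d+3)=d^2+5d+6$; hence the hypothesis $\beta_1(X;\mathbb{Z}_2)=d^2+5d+6$ forces equality in (\ref{eq:tnbrly}), and by the equality clause of Corollary \ref{prop:lss} any such $X$ with $d\geq 4$ is a neighborly member of ${\mathcal K}(d)$, which by Proposition \ref{prop:kalai} is obtained from a stacked $d$-sphere by $d^2+5d+6$ combinatorial handle additions. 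You correctly identify both genuine obstacles: the $d=3$ case, where the equality characterization (which rests on Proposition \ref{prop:novik-swartz}(b), valid only for $d\geq 4$) is unavailable; and, above all, the passage from ``neighborly member of ${\mathcal K}(d)$ with the prescribed $f_0$ and $\beta_1$'' to a classification into exactly two combinatorial types, for which no technique is known. The paper's only stated evidence is precisely the analogy you cite: the uniqueness of $K^d_{2d+3}$ as the $(2d+3)$-vertex triangulated manifold with $\beta_1\neq 0$ \cite{bd8, css}.

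One correction to your sketch: the remark that the orientability dichotomy ``should match the two candidate manifolds in each parity'' misreads Theorem \ref{thm:main}(e). The complexes $M^d_n$ and $N^d_n$ triangulate the \emph{same} manifold in each dimension --- both the orientable bundle connected sum when $d$ is even, both the non-orientable one when $d$ is odd. So the conjecture does not pair one candidate complex with each orientability class; it asserts two non-isomorphic triangulations of a single topological manifold. Consequently, a proof along your lines would need an additional step your outline omits: ruling out, for each parity of $d$, any example of the \emph{opposite} orientability (e.g., a non-orientable example for even $d$), since Proposition \ref{prop:kalai} alone permits both possibilities for a member of ${\mathcal K}(d)$ with the given $\beta_1$.
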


The remainder of the paper is organized as follows. In Section \ref{sec:pre}, we review some basic definitions
and results. Explicit description of the manifolds in Theorem \ref{thm:main} appears in Section \ref{sec:exam}.
In Section \ref{sec:cons}, we present a purely combinatorial way of constructing neighborly members of $\lKd$ and
use it to construct the families $M^d_{d^{\hspace{.1mm}2}+5d+5}$ and $N^d_{d^{\hspace{.1mm}2} +5d+5}$. In Section
\ref{sec:proof}, we prove properties of the aforementioned manifolds mentioned in Theorem \ref{thm:main}.

\section{Preliminaries} \label{sec:pre}

All simplicial complexes considered here are finite and abstract. We identify two complexes if they are
isomorphic. By a triangulated manifold, sphere or ball, we mean a simplicial complex whose geometric carrier is a
topological manifold, sphere or ball, respectively.

A $d$-dimensional simplicial complex is called {\em pure} if all its maximal faces (called {\em facets}) are
$d$-dimensional. A $d$-dimensional pure simplicial complex is said to be a {\em weak pseudomanifold} if each of
its $(d - 1)$-faces is in at most two facets. For a $d$-dimensional weak pseudomanifold $X$, the {\em boundary}
$\partial X$ of $X$ is the pure subcomplex of $X$ whose facets are those $(d-1)$-dimensional faces of $X$ which
are contained in unique facets of $X$. The {\em dual graph} $\Lambda(X)$ of a pure simplicial complex $X$ is the
graph whose vertices are the facets of $X$, where two facets are adjacent in $\Lambda(X)$ if they intersect in a
face of codimension one. A {\em pseudomanifold} is a weak pseudomanifold with a connected dual graph. All
connected triangulated manifolds are necessarily pseudomanifolds.

If $X$ is a $d$-dimensional simplicial complex then, for $0\leq j \leq d$, the number of its $j$-faces is denoted
by $f_j = f_j(X)$. The vector $(f_0, \dots, f_d)$ is called the {\em face vector} of $X$ and the number $\chi(X)
:= \sum_{i=0}^{d} (-1)^i f_i$ is called the {\em Euler characteristic} of $X$. As is well known, $\chi(X)$ is a
topological invariant, i.e., it depends only on the homeomorphic type of $|X|$ and, for any field $\mathbb{F}$,
$\chi(X)= \sum_{i=0}^{d} (-1)^i \beta_i(X; \mathbb{F})$, where $\beta_i(X; \mathbb{F}) = \dim_{\mathbb{F}}(H_i(X;
\mathbb{F}))$ is the {\em $i$-th Betti number} of $X$ with respect to the field $\mathbb{F}$. A simplicial
complex $X$ is said to be {\em $l$-neighbourly} if any $l$ vertices of $X$ form a face of $X$. By a {\em
neighborly} complex, we shall mean a $2$-neighborly complex.

Let $X$ be a weak pseudomanifold with disjoint facets $\gamma$, $\delta$
and let $\psi \colon \gamma\to \delta$ be a bijection.
Let $X^{\psi}$
denote the weak pseudomanifold obtained from $X \setminus \{\gamma, \delta\}$ by identifying $x$ with $\psi(x)$
for each $x\in \gamma$. Then $X^{\psi}$ is said to be obtained from $X$
by a {\em combinatorial handle addition}.
If $u$ and $\psi(u)$ have no common neighbor in $X$ for each
$u\in \gamma$ (such a $\psi$ is called an {\em admissible} map) and
$X$ is in ${\mathcal K}(d)$ then $X^{\psi}$ is also in ${\mathcal K}(d)$
(see \cite{bd9}).

A {\em standard $d$-ball} is a pure $d$-dimensional simplicial complex with one facet. The standard ball with
facet $\sigma$ is denoted by $\overline{\sigma}$. A {\em standard $d$-sphere} is a simplicial complex isomorphic
to the boundary complex of a standard $(d+1)$-ball. The standard $d$-sphere on the vertex-set $V$ is denoted by
$S^{\hspace{.15mm}d}_{d+2}(V)$ (or simply by $S^{\hspace{.15mm}d}_{d+2}$). A simplicial complex $X$ is called a
{\em stacked $d$-ball} if there exists a sequence $B_1, \dots, B_m$ of simplicial complexes such that $B_1$ is a
standard $d$-ball, $B_m=X$ and, for $2\leq i\leq m$, $B_i = B_{i-1}\cup \overline{\sigma}_i$ and $B_{i-1} \cap
\overline{\sigma}_i = \overline{\tau}_i$, where $\sigma_i$ is a $d$-face of $B_i$ and $\tau_i$ is a $(d-1)$-face
of $\sigma_i$. Clearly, a stacked ball is a pseudomanifold. A simplicial complex is called a {\em stacked
$d$-sphere} if it is (isomorphic to) the boundary of a stacked $(d+1)$-ball. A trivial induction on $m$ shows
that a stacked $d$-ball actually triangulates a topological $d$-ball, and hence a stacked $d$-sphere is a
triangulated $d$-sphere. If $X$ is a stacked ball then clearly $\Lambda(X)$ is a tree. So, the dual graph of a
stacked ball is a tree. But, the converse is not true (e.g., the 7-vertex 3-pseudomanifold $P$ whose facets are
$1234, 2345, 3456, 4567, 1567$ is a pseudomanifold for which the dual graph $\Lambda(P)$ is a tree but $P$ is not
a triangulated ball). We have (\cite{bdns1})

\begin{lemma}\label{lemma:stackedball} Let $X$ be a pure simplicial complex of dimension $d$.
\begin{enumerate} \item[{\rm (i)}] If the dual graph $\Lambda(X)$ is a tree then $f_0(X) \leq f_d(X) +d$.
\item[{\rm (ii)}] The graph $\Lambda(X)$ is a tree and $f_0(X) = f_d(X) +d$ if and only if $X$ is a stacked ball.
\end{enumerate}
\end{lemma}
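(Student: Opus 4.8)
The plan is to prove both statements by induction on the number of facets, leveraging the recursive structure of stacked balls and a careful counting of vertices versus facets.

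For part (i), suppose $\Lambda(X)$ is a tree with $f_d(X) = m$ facets. I would induct on $m$. The base case $m=1$ is immediate: a single $d$-facet has $d+1$ vertices, so $f_0(X) = d+1 = f_d(X)+d$. For the inductive step, since $\Lambda(X)$ is a tree with at least two vertices, it has a leaf, i.e. a facet $\sigma$ meeting the rest of $X$ in exactly one codimension-one face. Let $X' = X \setminus \{\sigma\}$ (removing the top face and any faces of $\sigma$ no longer contained in a facet). Then $\Lambda(X')$ is the tree obtained by deleting the leaf, so it is still a tree with $m-1$ facets, and by induction $f_0(X') \leq f_d(X')+d = (m-1)+d$. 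Adding back $\sigma$ introduces at most one new vertex (the vertex of $\sigma$ opposite the shared codimension-one face), so $f_0(X) \leq f_0(X')+1 \leq m+d = f_d(X)+d$.

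For part (ii), the ``if'' direction is essentially the definition: if $X$ is a stacked ball built from the sequence $B_1,\dots,B_m$, then $\Lambda(X)$ is a tree (as noted in the text), and the equality $f_0(X)=f_d(X)+d$ follows because each step $B_i = B_{i-1}\cup\overline{\sigma}_i$ with $B_{i-1}\cap\overline{\sigma}_i = \overline{\tau}_i$ adds exactly one facet and exactly one new vertex (the vertex of $\sigma_i$ not in $\tau_i$). So $f_0$ and $f_d$ both increase by one at each step, preserving $f_0-f_d = d$ from the base case. For the ``only if'' direction, I would again induct on $m$, refining the argument of part~(i): equality $f_0(X)=f_d(X)+d$ forces equality at \emph{every} step of the inductive counting. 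Concretely, removing a leaf facet $\sigma$ must add back exactly one new vertex, which means the shared face $\overline{\tau} = \sigma \cap X'$ must be a full $(d-1)$-face (a codimension-one face of $\sigma$); if $\sigma$ met $X'$ in a lower-dimensional face, it would contribute more than one new vertex and the inequality would be strict. Moreover the equality $f_0(X')=f_d(X')+d$ is inherited, so by induction $X'$ is a stacked ball; appending $\sigma$ along the facet $\overline{\tau}$ is precisely the operation building a stacked ball, whence $X$ is stacked.

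\textbf{The main obstacle} I anticipate is the bookkeeping in the ``only if'' direction, specifically verifying that a leaf of $\Lambda(X)$ shares a \emph{genuine} $(d-1)$-face (not merely a face of codimension one in the graph-adjacency sense but possibly reused) and that removing it leaves a pure complex whose dual graph is still a tree with the new-vertex count tight. One must ensure the removed facet's shared face $\tau$ is disjoint from the ``extra'' vertices counted at earlier stages, so that the one-new-vertex-per-step accounting is exact rather than merely an upper bound. Care is also needed to confirm that deleting a leaf facet does not disconnect or otherwise corrupt $X'$ as a pure $d$-dimensional complex; this rests on $\sigma$ being a leaf, so that every $(d-1)$-face of $\sigma$ except $\tau$ lies in $\sigma$ alone and is correctly discarded, while $\tau$ survives as a boundary face of $X'$.
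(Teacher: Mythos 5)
Your proposal is correct and follows essentially the same route as the paper: induction on the number of facets, removing a leaf of the dual tree, and observing that the leaf facet contributes at most one new vertex (with equality forcing the intersection with the rest to be exactly a $(d-1)$-face, which drives the converse in part (ii)). The bookkeeping worries you flag are resolved exactly as in the paper, by noting that a leaf is by definition glued along a codimension-one face and that $f_0(Y)=n$ would contradict part (i) applied to $Y$.
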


\begin{proof} Let $f_d(X)=m$ and $f_0(X)=n$. So, $\Lambda(X)$ is a graph with $m$ vertices. We prove (i) by
induction on $m$. If $m=1$ then the result is true with equality. So, assume that $m >1$ and the result is true
for smaller values of $m$. Since $\Lambda(X)$ is a tree, it has a vertex $\sigma$ of degree one (leaf) and hence
$\Lambda(X)-\sigma$ is again a tree. Let $Y$ be the pure simplicial complex (of dimension $d$) whose facets are
those of $X$ other than $\sigma$. Since $\sigma$ has a $(d-1)$-face in $Y$, it follows that $f_0(Y) \geq n-1$.
Since $f_d(Y) = m -1$, the result is true for $Y$ and hence $f_0(Y) \leq (m -1) +d$. Therefore, $n \leq f_0(Y) +1
\leq 1 + (m-1) +d = m+d$. This proves (i).

If $X$ is a stacked $d$-ball with $m$ facets then $X$ is a pseudomanifold and by the definition (since at each of
the $m-1$ stages one adds one facet and one vertex), $n = (d+1) + (m-1) = m+d$. Conversely, let $\Lambda(X)$ be a
tree and $n= f_0(X) = m+d$. Let $Y$, $\sigma$ be as above. Since $f_0(Y) \geq n -1$, it follows that $f_0(Y) = n$
or $n -1$. If $f_0(Y) = n$ then $f_0(Y) = n > (m-1) +d = f_d(Y) +m$, a contradiction to part (i). So, $f_0(Y) =
n-1$ and hence $Y\cap \overline{\sigma}$ is a $(d-1)$-face of $\sigma$. Since $f_d(Y) = m-1$, by induction
hypothesis, $Y$ is a stacked $d$-ball and hence $X = Y \cup \overline{\sigma}$ is a stacked $d$-ball. This proves
(ii).
\end{proof}

\begin{cor}\label{cor:C0}
Let $X$ be a pure $d$-dimensional simplicial complex and let $CX$ denote a {\em cone} over $X$. Then $CX$ is a
stacked $(d+1)$-ball if and only if $X$ is a stacked $d$-ball.
\end{cor}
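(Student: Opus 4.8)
The plan is to reduce the whole statement to the combinatorial characterization of stacked balls established in Lemma \ref{lemma:stackedball}(ii): a pure $k$-dimensional complex $Y$ is a stacked $k$-ball if and only if its dual graph $\Lambda(Y)$ is a tree and $f_0(Y) = f_k(Y) + k$. So it suffices to show that passing from $X$ to $CX$ preserves both of these conditions simultaneously, and then apply the lemma in dimension $d+1$ to $CX$ and in dimension $d$ to $X$.

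First I would fix notation. Let $v$ be the cone apex, so $CX = v \ast X$, and let $\sigma_1, \dots, \sigma_m$ be the facets of $X$; these are $d$-faces since $X$ is pure of dimension $d$. The facets of $CX$ are then precisely $v\ast\sigma_1, \dots, v\ast\sigma_m$, because every face of $CX$ is contained in some $v\ast\sigma_i$. This gives the two bookkeeping identities $f_{d+1}(CX) = f_d(X) = m$ and $f_0(CX) = f_0(X) + 1$, so that the equality $f_0(CX) = f_{d+1}(CX) + (d+1)$ becomes $f_0(X) + 1 = m + (d+1)$, i.e.\ $f_0(X) = f_d(X) + d$.

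Next I would identify the dual graphs. For two facets $v\ast\sigma_i$ and $v\ast\sigma_j$ of $CX$, their intersection is $v\ast(\sigma_i \cap \sigma_j)$, and this is a codimension-one face of $CX$ exactly when $\sigma_i \cap \sigma_j$ is a $(d-1)$-face of $X$, that is, exactly when $\sigma_i$ and $\sigma_j$ are adjacent in $\Lambda(X)$. Hence the map $v\ast\sigma_i \mapsto \sigma_i$ is an isomorphism $\Lambda(CX) \cong \Lambda(X)$; in particular $\Lambda(CX)$ is a tree if and only if $\Lambda(X)$ is a tree.

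Combining the two equivalences with Lemma \ref{lemma:stackedball}(ii) then yields the corollary: $CX$ is a stacked $(d+1)$-ball iff $\Lambda(CX)$ is a tree and $f_0(CX) = f_{d+1}(CX)+(d+1)$, iff $\Lambda(X)$ is a tree and $f_0(X) = f_d(X)+d$, iff $X$ is a stacked $d$-ball. I do not expect a genuine obstacle here; the only point requiring a moment's care is the dual-graph isomorphism, namely that coning neither creates nor destroys codimension-one adjacencies, and this is exactly because the apex $v$ is common to all facets of $CX$.
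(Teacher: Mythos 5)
Your proposal is correct and is essentially the paper's own argument: the authors likewise note that $f_{d+1}(CX)=f_d(X)$, $f_0(CX)=f_0(X)+1$, and $\Lambda(CX)\cong\Lambda(X)$, and then invoke Lemma \ref{lemma:stackedball}. You have merely filled in the routine verifications (facets of $CX$, the adjacency correspondence) that the paper leaves implicit.
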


\begin{proof} Notice that $f_{d+1}(CX)=f_d(X)$ and
$f_0(CX)=f_0(X)+1$. Also $\Lambda(CX)$ is naturally isomorphic to $\Lambda(X)$. The proof now follows from Lemma
\ref{lemma:stackedball}.
\end{proof}

Clearly, if $N \in \overline{{\mathcal K}}(d)$ then $N$ is a triangulated manifold with boundary and satisfies
\begin{align} \label{eq:skel}
{\rm skel}_{d-2}(N) = {\rm skel}_{d-2}(\partial N).
\end{align}
Here ${\rm skel}_{j}(N) := \{\alpha\in N : \dim(\alpha) \leq j\}$ is the $j$-skeleton of $N$. From \cite[Remark
2.20]{bd17}, it follows\,:

\begin{prop}[Bagchi and Datta]\label{prop:bagchidatta}
For $d\geq 4$, the map $M \mapsto \partial M$ is a bijection between $\overline{\mathcal K}(d+1)$ and $\Kd$.
\end{prop}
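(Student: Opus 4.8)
The plan is to exhibit an explicit two-sided inverse to the boundary map, a \emph{filling map} $\Phi\colon \Kd\to\overline{\mathcal K}(d+1)$, and then to verify $\partial\circ\Phi={\rm id}$ and $\Phi\circ\partial={\rm id}$. Two combinatorial facts drive everything. First, for any pure $e$-dimensional complex $W$ (with $\partial$ defined as in Section \ref{sec:pre}) and any vertex $v$, the boundary and link operators commute, $\lk{\partial W}{v}=\partial(\lk{W}{v})$: indeed a $(e-2)$-face $\beta$ of $\lk{W}{v}$ lies in $\partial(\lk{W}{v})$ iff it lies in a unique facet of $\lk{W}{v}$, iff $\{v\}\cup\beta$ lies in a unique facet of $W$, iff $\{v\}\cup\beta\in\partial W$, iff $\beta\in\lk{\partial W}{v}$; the facet sets agree, so the complexes do. Second, and crucially, I claim that for $k\geq 3$ every stacked $(k-1)$-sphere bounds a \emph{unique} stacked $k$-ball. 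This uniqueness is the technical heart.

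To prove it, I would first record, by a short induction on the number of facets, that the vertex-links of a stacked $k$-ball are stacked $(k-1)$-balls: when one stacks a facet $\sigma$ onto $B_0$ along a $(k-1)$-face $\tau$, the link of a vertex of $\tau$ acquires exactly one new facet glued along a $(k-2)$-face, which is a stacking move. Hence stacked balls lie in $\overline{\mathcal K}(k)$ and obey \eqref{eq:skel}; in particular for $k\geq 2$ all their vertices lie on the boundary, so any stacked $k$-ball $B$ with $\partial B=S$ has $f_0(B)=f_0(S)$. Uniqueness then follows by induction on $f_0(S)$: when $f_0(S)=k+1$, Lemma \ref{lemma:stackedball} forces $f_k(B)=f_0(B)-k=1$, so $B$ is the standard ball; when $f_0(S)>k+1$, choose a vertex $w$ of $S$ with $\lk{S}{w}$ a standard $(k-2)$-sphere (such a vertex exists in every stacked sphere with more than $k+1$ vertices), observe that $\lk{B}{w}$ is a stacked $(k-1)$-ball with boundary $\lk{S}{w}$, hence equals the standard $(k-1)$-ball by the base case in dimension $k-1$ (which needs $k-1\geq 2$), so $w$ lies in the single facet $\{w\}\cup\lk{S}{w}$; deleting this free facet reduces both $B$ and $S$ and invokes the inductive hypothesis.

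With these tools, I define $\Phi$. For $X\in\Kd$ and each vertex $v$, let $B_v$ be the unique stacked $d$-ball with $\partial B_v=\lk{X}{v}$ (existence is the definition of a stacked sphere; uniqueness is the claim with $k=d\geq 4$). Let $M=\Phi(X)$ be the complex generated by all $(d+1)$-sets $\sigma$ for which $\sigma\setminus v$ is a facet of $B_v$ for \emph{some} $v\in\sigma$. The main obstacle is the \emph{compatibility} of these local fillings: I must show that ``for some $v$'' forces ``for all $v$''. Given $\sigma\setminus v$ a facet of $B_v$ and any $u\in\sigma\setminus\{v\}$, the set $\sigma\setminus\{u,v\}$ is a facet of $\lk{B_v}{u}$, a stacked $(d-1)$-ball whose boundary — by the commutation lemma applied inside $B_v$ and transitivity of links — equals $\lk{\lk{X}{v}}{u}=\lk{X}{\{u,v\}}$; symmetrically $\lk{B_u}{v}$ is a stacked $(d-1)$-ball with the same boundary. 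By uniqueness in dimension $k=d-1\geq 3$ these two balls coincide, so $\sigma\setminus\{u,v\}$ is a facet of $\lk{B_u}{v}$, i.e.\ $\sigma\setminus u$ is a facet of $B_u$. This is precisely the step that requires $d\geq 4$.

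Compatibility yields $\lk{M}{v}=B_v$ for every vertex $v$, so $M\in\overline{\mathcal K}(d+1)$ and $\Phi$ is well defined; moreover the commutation lemma gives $\lk{\partial M}{v}=\partial(\lk{M}{v})=\partial B_v=\lk{X}{v}$ for all $v$, and since $M$ and $X$ share the same vertex set this forces $\partial M=X$, proving $\partial\circ\Phi={\rm id}$. The same commutation computation, applied to an arbitrary $M\in\overline{\mathcal K}(d+1)$, shows $\lk{\partial M}{v}=\partial(\lk{M}{v})$ is a stacked $(d-1)$-sphere, so $\partial M\in\Kd$ and $\partial$ is well defined into $\Kd$. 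Finally, for such $M$ the local fillings of $\partial M$ are forced: $\lk{M}{v}$ is itself a stacked $d$-ball with boundary $\lk{\partial M}{v}$, so uniqueness gives $\lk{M}{v}=B_v$ and hence $\Phi(\partial M)=M$, i.e.\ $\Phi\circ\partial={\rm id}$. Therefore $M\mapsto\partial M$ is a bijection between $\overline{\mathcal K}(d+1)$ and $\Kd$ with inverse $\Phi$.
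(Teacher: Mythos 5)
Your proposal is essentially correct, but there is nothing in the paper to compare it against: Proposition \ref{prop:bagchidatta} is stated here without proof, being quoted from Remark 2.20 of \cite{bd17}. Your route --- localize everything at vertex links, prove that for $k\geq 3$ a stacked $(k-1)$-sphere bounds a unique stacked $k$-ball, and glue the local fillings using that same uniqueness one dimension lower at each edge --- is in substance the standard argument behind the cited result. The individual steps check out: the commutation $\lk{\partial W}{v}=\partial(\lk{W}{v})$; vertex links of stacked balls being stacked balls, whence \eqref{eq:skel} and (for $k\geq 2$) the fact that all vertices lie on the boundary, which is exactly what makes the induction on $f_0(S)$ work and what fails for $k=2$ (a path may have interior vertices, so a square bounds two stacked $2$-balls); and the compatibility step $\lk{B_v}{u}=\lk{B_u}{v}$, which you correctly identify as the only place where $d\geq 4$ is needed. (Injectivity of $\partial$ needs only uniqueness in dimension $d\geq 3$, which is consistent with the paper restricting Proposition \ref{prop:tight3} to a statement about which members of ${\mathcal K}(3)$ are boundaries.) A few small imprecisions to repair rather than gaps: the generators of $\Phi(X)$ are $(d+2)$-element sets, i.e.\ $(d+1)$-simplices, not ``$(d+1)$-sets''; the commutation lemma should be asserted for weak pseudomanifolds, where $\partial$ is defined, rather than for arbitrary pure complexes (every complex you apply it to is one, since its vertex links are stacked balls); and in the uniqueness induction the deletion step deserves one explicit line --- the unique facet through $w$ is a leaf of the tree $\Lambda(B)$, so removing it leaves a tree with one fewer vertex and one fewer facet, and Lemma \ref{lemma:stackedball}(ii) then certifies that the antistar of $w$ is again a stacked ball, with boundary determined by $S$ and $w$ alone.
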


The following corollary follows from Proposition \ref{prop:bagchidatta} (cf. \cite{bdns1}).

\begin{cor}\label{cor:auto}
For $d\geq 4$, if $M\in \overline{\mathcal K}(d+1)$ then ${\rm Aut}(M)={\rm Aut}(\partial M)$.
\end{cor}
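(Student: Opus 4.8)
The plan is to realize both groups as subgroups of the symmetric group on one common vertex set and then prove the two inclusions separately. First I would record that, since $M\in\overline{\mathcal K}(d+1)$, the skeleton identity (\ref{eq:skel}) applied to $M$ gives ${\rm skel}_{d-1}(M)={\rm skel}_{d-1}(\partial M)$; as $d\geq 4$, comparing $0$-skeleta shows that $M$ and $\partial M$ have the same vertex set $V$. Consequently every element of ${\rm Aut}(M)$ and every element of ${\rm Aut}(\partial M)$ is a permutation of $V$, and it suffices to show that these two subgroups of the symmetric group on $V$ coincide.

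For the inclusion ${\rm Aut}(M)\subseteq {\rm Aut}(\partial M)$, I would argue that any simplicial automorphism of a weak pseudomanifold preserves its boundary. Indeed, an automorphism $\phi$ of $M$ permutes the facets (the $(d+1)$-faces) bijectively, so for each $d$-face $\tau$ the number of facets containing $\tau$ equals the number containing $\phi(\tau)$. Hence $\phi$ carries the $d$-faces lying in a unique facet to $d$-faces with the same property, that is, it maps facets of $\partial M$ to facets of $\partial M$. Since $\partial M$ is generated by its facets and $\phi$ fixes $V$ setwise, $\phi$ restricts to an automorphism of $\partial M$; because the vertex sets agree, this restriction is literally $\phi$, so $\phi\in{\rm Aut}(\partial M)$. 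This direction uses nothing about $\overline{\mathcal K}$.

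The reverse inclusion ${\rm Aut}(\partial M)\subseteq {\rm Aut}(M)$ is where Proposition \ref{prop:bagchidatta} enters. Given $\psi\in{\rm Aut}(\partial M)$, viewed as a permutation of $V$, I would form the relabelled complex $\psi(M)=\{\psi(\sigma):\sigma\in M\}$. Being the image of $M$ under a simplicial bijection, $\psi(M)$ is isomorphic to $M$ and hence again a member of $\overline{\mathcal K}(d+1)$; moreover $\partial(\psi(M))=\psi(\partial M)=\partial M$, the last equality holding because $\psi$ fixes the complex $\partial M$. Thus $\psi(M)$ and $M$ are two members of $\overline{\mathcal K}(d+1)$ on the vertex set $V$ having the same boundary, and I would invoke the injectivity supplied by Proposition \ref{prop:bagchidatta} to conclude $\psi(M)=M$, that is, $\psi\in{\rm Aut}(M)$. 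Combining the two inclusions yields ${\rm Aut}(M)={\rm Aut}(\partial M)$.

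The step that needs care, and the only genuine obstacle, is that Proposition \ref{prop:bagchidatta} must be read as asserting that a member of $\overline{\mathcal K}(d+1)$ is determined by its boundary on a \emph{fixed} vertex set, and not merely up to abstract isomorphism; otherwise the equality $\partial(\psi(M))=\partial(M)$ would only return $\psi(M)\cong M$, which is already clear. This is precisely the content of the canonical reconstruction underlying Remark 2.20 of \cite{bd17}: above the common $(d-1)$-skeleton, the $d$- and $(d+1)$-faces of $M$ are forced by $\partial M$. I would therefore make explicit that the inverse of the boundary map is this canonical filling, so that two fillings of the same boundary complex on $V$ are equal as complexes on $V$ — which is exactly what the argument needs, and is the point where one must be most careful in the write-up.
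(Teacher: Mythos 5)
Your proof is correct and is essentially the paper's own argument: the paper derives the corollary in one line from Proposition \ref{prop:bagchidatta}, and your write-up simply unpacks that, including the one point that genuinely needs care (reading the bijection $M\mapsto\partial M$ as a canonical reconstruction over a fixed vertex set rather than merely up to isomorphism, which is indeed the content of Remark 2.20 of \cite{bd17}). The two inclusions you give, with the easy direction coming from automorphisms preserving the boundary of a weak pseudomanifold, are exactly what is implicit in the paper.
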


Note that any automorphism $\varphi$ of a pure simplicial complex $X$ induces an automorphism $\bar{\varphi}$ of
the dual graph $\Lambda(X)$ given by $\sigma \mapsto \varphi(\sigma)$ for any facet $\sigma$ of $X$. Here we
have\,:

\begin{lemma}\label{lemma:dualgraph}
Let $X$ be a pseudomanifold which is not a cone $($i.e., not all the facets are through a single vertex$)$. Then,
$\varphi \mapsto \bar{\varphi}$ is an injective group homomorphism from ${\rm Aut}(X)$ into ${\rm
Aut}(\Lambda(X))$. Thus, ${\rm Aut}(X)$ is naturally isomorphic to a subgroup of ${\rm Aut}(\Lambda(X))$.
\end{lemma}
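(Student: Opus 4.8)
The plan is to verify the two nontrivial assertions separately: that $\varphi \mapsto \bar\varphi$ respects composition, and that it has trivial kernel. The homomorphism property is immediate, since for automorphisms $\varphi,\psi$ of $X$ and any facet $\sigma$ one has $\overline{\varphi\circ\psi}(\sigma) = (\varphi\circ\psi)(\sigma) = \varphi(\psi(\sigma)) = \bar\varphi(\bar\psi(\sigma))$, whence $\overline{\varphi\circ\psi} = \bar\varphi\circ\bar\psi$. (That each $\bar\varphi$ is a well-defined element of $\mathrm{Aut}(\Lambda(X))$ is exactly the remark preceding the statement, so I would not re-derive it.) Consequently injectivity of $\varphi\mapsto\bar\varphi$ is equivalent to showing that its kernel is trivial, i.e. that $\bar\varphi = \mathrm{id}_{\Lambda(X)}$ forces $\varphi = \mathrm{id}_X$.

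So I would assume $\bar\varphi = \mathrm{id}$, which says precisely that $\varphi$ fixes every facet of $X$ setwise, and translate this into a statement about vertices. Fix a vertex $v$ and set $u := \varphi(v)$; I claim $u$ and $v$ lie in exactly the same facets. Indeed, for any facet $\sigma$ we have $\varphi(\sigma) = \sigma$, and since both $\varphi$ and $\varphi^{-1}$ are bijections fixing $\sigma$ setwise, $v \in \sigma \Leftrightarrow \varphi(v) = u \in \sigma$. Thus it suffices to prove that in a pseudomanifold which is not a cone, no two distinct vertices can be contained in precisely the same set of facets; this yields $u = v$ for every $v$, and hence $\varphi = \mathrm{id}_X$.

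The hard part is this last claim, where I expect the pseudomanifold and non-cone hypotheses to be used together, and I would prove it by exploiting connectedness of $\Lambda(X)$. Suppose $u \neq v$ lie in the same facets, and partition the facets of $X$ into the set $A$ of those containing both $u$ and $v$ and the set $B$ of those containing neither; by the supposition there are no other facets. A facet $\sigma \in A$ and a facet $\tau \in B$ can never be adjacent in $\Lambda(X)$: since $\sigma$ contains $u,v$ while $\tau$ contains neither, $\sigma \cap \tau \subseteq \sigma\setminus\{u,v\}$, so $|\sigma\cap\tau| \leq d-1 < d$ and $\sigma\cap\tau$ is not a codimension-one face. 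Hence no edge of $\Lambda(X)$ joins $A$ to $B$, and since $\Lambda(X)$ is connected (this is where $X$ being a pseudomanifold enters) one of $A,B$ is empty. As $u$ is a vertex it lies in some facet, which then also contains $v$ and so belongs to $A$; therefore $B = \emptyset$, meaning every facet of $X$ contains $v$. But that makes $X$ a cone with apex $v$, contradicting the hypothesis, and the claim follows.

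The only points to watch are that the equivalence in the second paragraph is genuinely bidirectional, which needs $\varphi^{-1}$ fixing each facet setwise, and that $A$ is nonempty; both are routine. The entire argument rests on the single observation that two facets disagreeing on their $\{u,v\}$-membership are too far apart to be adjacent in the dual graph, so that connectedness collapses the partition.
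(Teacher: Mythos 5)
Your proof is correct. It differs from the paper's in how the kernel is shown to be trivial, though both arguments ultimately rest on the same two hypotheses (connectedness of $\Lambda(X)$ and the non-cone condition). The paper argues pointwise and locally: for a fixed vertex $x$ it takes a path in $\Lambda(X)$ from a facet containing $x$ to one not containing $x$, locates the edge of the path where $x$ drops out, and observes that for adjacent facets $\alpha_l, \alpha_{l+1}$ one has $\alpha_l\setminus\alpha_{l+1}=\{x\}$; since $\varphi$ fixes both facets setwise it must fix this singleton, so $\varphi(x)=x$ at once. You instead factor the argument through an intermediate combinatorial lemma --- in a non-cone pseudomanifold no two distinct vertices lie in exactly the same set of facets --- which you prove globally by partitioning the facets into those containing both of $u,v$ and those containing neither, and showing no dual edge crosses the partition because the intersection of such a pair has at most $d-1$ vertices. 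What your route buys is a clean, reusable separation statement and a slightly weaker counting requirement (you never need that adjacent facets differ in exactly one vertex on each side, only that an $A$--$B$ intersection is too small to have codimension one); what the paper's route buys is brevity, reaching $\varphi(x)=x$ directly without the detour through facet-membership sets. Your two flagged points (the need for $\varphi^{-1}$ to fix facets setwise for the reverse implication, and the nonemptiness of $A$ via purity of $X$) are both handled correctly.
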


\begin{proof}
Clearly, $\varphi \mapsto \bar{\varphi}$ is a group homomorphism. Let $\varphi$ be such that $\bar{\varphi}$ is
identity on $\Lambda(X)$. Thus $\varphi(\sigma) = \sigma$ for each facet $\sigma$ in $X$. Let $x\in V(X)$ be
arbitrary. Choose facets $\alpha,\beta$ such that $x\in \alpha$ and $x\not\in \beta$. As $\Lambda(X)$ is
connected, there is a path $\alpha_0\alpha_1\cdots \alpha_k$ in $\Lambda(X)$ with $\alpha_0=\alpha$ and $\alpha_k
= \beta$. Since $x\in \alpha_0$ and $x\not\in \alpha_k$, there exists $l<k$ such that $x$ is in $\alpha_0,
\alpha_1, \ldots, \alpha_l$ and $x\not\in \alpha_{l+1}$. Hence $\alpha_l\setminus \alpha_{l+1} = \{x\}$.  Now
$\varphi(\alpha_l) = \alpha_l$ and $\varphi(\alpha_{l+1})=\alpha_{l+1}$ imply $\varphi(x) =x$. Since $x$ was
arbitrary, we see that $\varphi$ is identity on $X$.
\end{proof}

A $d$-dimensional simplicial complex $X$ is called {\em minimal} if $f_0(X) \leq f_0(Y)$ for every triangulation
$Y$ of the geometric carrier $|X|$ of $X$. We say that $X$ is {\em strongly minimal} if $f_i(X) \leq f_i(Y)$,
$0\leq i \leq d$, for all such $Y$. In  \cite{bd16}, Bagchi and Datta have shown the following.

\begin{prop}[Bagchi and Datta]\label{prop:minimal}
For any field $\mathbb{F}$, each $\mathbb{F}$-tight member of\,  ${\mathcal K}(d)$ is strongly minimal.
\end{prop}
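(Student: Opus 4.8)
The plan is to show that $\mathbb{F}$-tightness forces $X$ to have the minimum possible number of vertices among all triangulations of $M:=|X|$, and then to promote this single inequality to a coordinatewise comparison of the entire face vector by invoking the lower bound theorem for manifolds, whose equality case is exactly Walkup's class. Throughout I write $\beta:=\beta_1(M;\mathbb{Z}_2)$, which is a topological invariant and hence common to $X$ and to every triangulation $Y$ of $M$. As a preliminary I would record that $\mathbb{F}$-tightness implies $X$ is neighborly: taking $j=0$ in the tightness condition with $Y$ the induced subcomplex on an arbitrary pair $\{u,v\}$, injectivity of $H_0(Y;\mathbb{F})\to H_0(X;\mathbb{F})=\mathbb{F}$ forces $Y$ to be connected, i.e. $uv\in X$. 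This step is field-independent.

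The key step is to prove $f_0(X)\le f_0(Y)$ for every triangulation $Y$ of $M$. For $d\ge 4$, Proposition \ref{prop:kalai} presents $X\in\Kd$ as a stacked $d$-sphere with $\beta$ combinatorial handles, and since each handle addition raises $g_2:=f_1-(d+1)f_0+\binom{d+2}{2}$ by $\binom{d+2}{2}$ while a stacked sphere has $g_2=0$, we get $g_2(X)=\binom{d+2}{2}\beta$. Neighborliness gives $f_1(X)=\binom{f_0(X)}{2}$, and a direct computation turns $g_2(X)=\binom{d+2}{2}\beta$ into $\binom{f_0(X)-d-1}{2}=\binom{d+2}{2}\beta$, i.e. $X$ attains equality in (\ref{eq:tnbrly}). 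Applying Corollary \ref{prop:lss} to an arbitrary $Y$ yields $\binom{d+2}{2}\beta\le\binom{f_0(Y)-d-1}{2}$, and since $t\mapsto\binom{t-d-1}{2}$ is strictly increasing for $t>d+1$ we conclude $f_0(Y)\ge f_0(X)$. For $d=3$ the same conclusion follows from Proposition \ref{prop:tight3}, whose condition (iii) is precisely equality in (\ref{eq:tnbrly}) with $d=3$; for $d=2$ one argues classically, using that neighborliness together with Euler's relation and $2f_1=3f_2$ fixes $f_0(X)$ at the Heawood value while the same relations bound $f_0(Y)$ from below.

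Next I would bootstrap from $f_0$ to all $f_i$. For $X\in\Kd$ every vertex link is a stacked $(d-1)$-sphere, whose $f$-vector is an increasing affine function of its number of vertices; summing the incidence identity $\sum_{v}f_{i-1}(\mathrm{lk}(v))=(i+1)f_i$ over the vertices then shows that the whole face vector of any member of $\Kd$ (with $\beta$ and $d$ fixed) is a fixed, coordinatewise increasing function $\Phi_i$ of $f_0$, and that $X$ realizes it: $f_i(X)=\Phi_i(f_0(X))$. On the other hand the lower bound theorem for triangulated $d$-manifolds (Barnette and Kalai, with the handle correction of Novik and Swartz), whose minimizers for prescribed $f_0$ and $\beta$ are exactly the members of $\Kd$, gives $f_i(Y)\ge\Phi_i(f_0(Y))$ for every triangulation $Y$. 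Combining this with $f_0(Y)\ge f_0(X)$ and the monotonicity of $\Phi_i$ yields $f_i(Y)\ge\Phi_i(f_0(Y))\ge\Phi_i(f_0(X))=f_i(X)$ for all $0\le i\le d$, which is strong minimality. For $d=2$ this last step is immediate, since $f_1$ and $f_2$ are the same increasing functions of $f_0$ for every triangulation of a fixed surface.

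I expect the main obstacle to be the key step, the proof of vertex-minimality, since this is where tightness genuinely enters: for $d\ge 4$ it enters only through neighborliness, but at $d=3$ one needs the sharper equivalence of Proposition \ref{prop:tight3} (neighborly members of $\Kd$ need not be tight when $d=3$). Here some care is required over a general field $\mathbb{F}$, because the bounds invoked are phrased over $\mathbb{Z}_2$; one should check that the argument is unaffected, either by reproving the relevant inequalities over $\mathbb{F}$ or by noting that the pertinent Betti number is the handle count and so is field-independent for the manifolds in question. The second essential ingredient, the manifold lower bound theorem together with its Walkup-class equality case, is precisely what upgrades vertex-minimality to full strong minimality.
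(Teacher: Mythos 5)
The paper does not actually prove Proposition \ref{prop:minimal}; it imports it from \cite{bd16}, so there is no internal proof to compare against. Your strategy --- tightness forces neighborliness, neighborliness plus the Novik--Swartz/Lutz--Sulanke--Swartz bound forces vertex-minimality, and a lower bound theorem with handle correction promotes this to the whole face vector --- is the right one and is in the spirit of the cited source. Your computations are correct: each handle addition does raise $g_2$ by $\binom{d+2}{2}$, neighborliness does convert $g_2(X)=\binom{d+2}{2}\beta$ into equality in (\ref{eq:tnbrly}), and monotonicity of $t\mapsto\binom{t-d-1}{2}$ gives $f_0(Y)\geq f_0(X)$ for $d\geq 4$ (modulo the trivial edge case $\beta=0$).

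Two steps are genuinely incomplete. First, the coordinatewise bound $f_i(Y)\geq\Phi_i(f_0(Y))$ for an \emph{arbitrary} triangulation $Y$ is the entire content of strong minimality beyond vertex-minimality, and you only name it; the paper records only the $i=1$ case (Proposition \ref{prop:novik-swartz}). The claim is true, and the identity $\sum_v f_{i-1}(\mathrm{lk}(v))=(i+1)f_i$ that you use to compute $f_i(X)$ is exactly what closes it: apply Kalai's lower bound theorem \cite{ka} to the vertex links of $Y$, sum over $v$, and substitute the $f_1$-bound of Proposition \ref{prop:novik-swartz} (the coefficient of $f_1$ is positive, so the substitution preserves the inequality); for $X\in\Kd$ all of these inequalities are equalities. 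You should make this explicit rather than appeal to an unstated ``minimizers are exactly $\Kd$'' theorem, which in any case is not quite what you need and is not correct as stated for $d=3$. Second, and more seriously, the case $d=3$ over a field $\mathbb{F}\neq\mathbb{Z}_2$ is a real gap, not a formality: Proposition \ref{prop:tight3} is a statement about $\mathbb{Z}_2$-tightness, and $\mathbb{F}$-tightness does not obviously deliver its condition (iii). This is precisely where \cite{bd16} has to do work of a different kind (a Morse-theoretic $\mu$-vector argument valid over any field), and ``either reprove the inequalities over $\mathbb{F}$ or note field-independence'' is a placeholder rather than an argument. A minor observation: for $d\geq 4$ your proof uses tightness only through neighborliness, so it actually establishes strong minimality of all neighborly members of $\Kd$; that is consistent with Proposition \ref{prop:ef}, but it highlights that the tightness hypothesis is doing real work only in dimensions $2$ and $3$.
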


\section{Examples} \label{sec:exam}

In this section, we present our examples of neighborly members of $\Kd$ for every $d\geq 2$.

\begin{eg}\label{example:Md}
{\rm Let $d\geq 2$ and $n = d^{\hspace{.3mm}2}+5d+5$. Consider the $(d+1)$-dimensional pure simplicial complex
${\mathcal M}^{\hspace{.1mm}d+1}_{n}$ on the vertex set $\{a_0, a_1, \ldots, a_{n-1}\}$ whose $(d+2)n$ facets
are
\begin{align} 
\sigma_i  = & \{a_{i-j} \, : \, 0\leq j\leq d+1\}, \, \, \mu_i =\{a_i\}\cup \{a_{i+j(d+3)-1} \, : \, 1\leq j
\leq d+1\}, \nonumber \\
\alpha_{k,i}  = & \{a_i\}\cup \{a_{i-j} \, : \, 2\leq j\leq d+2-k\}\cup \{a_{i+j(d+3)-1}: 1\leq j\leq k\},
\end{align}
$0\leq i\leq n-1$, $1\leq k\leq d$. The subscripts (except the first subscript on $\alpha$) are to be taken
modulo $n$. For all $d\geq 2$, ${\mathcal M}^{\hspace{.1mm}d+1}_{n}$ is a neighborly member of
$\overline{\mathcal K}(d+1)$ (see Lemma \ref{lemma:Md}). We further define
\begin{equation}\label{eq:series-Md}
M^{\hspace{.1mm}d}_{n} := \partial {\mathcal M}^{\hspace{.1mm}d+1}_{n}.
\end{equation}
Since ${\mathcal M}^{\hspace{.1mm}d+1}_{n}\in \overline{{\mathcal K}}(d+1)$, we have $M^{\hspace{.1mm}d}_{n}\in
{\mathcal K}(d)$. By (\ref{eq:skel}), ${\rm skel}_{d- 1}(M^{\hspace{.1mm}d}_{n}) = {\rm skel}_{d-1}({\mathcal
M}^{\hspace{.1mm}d+ 1}_{n})$. This implies that $f_0(M^{\hspace{.1mm}d}_{n}) = f_{0}({\mathcal M}^{\hspace{.1mm}d
+1}_{n})=n$ and, since $d\geq 2$, $M^{\hspace{.1mm}d}_{n}$ is neighborly. }
\end{eg}

\begin{eg}\label{example:Nd}
{\rm Let $d\geq 2$ and $n = d^{\hspace{.3mm}2}+5d+5$. Consider the $(d+1)$-dimensional pure simplicial complex
${\mathcal N}^{\hspace{.1mm}d+1}_{n}$ on the vertex set $\{a_0, a_1, \ldots, a_{n-1}\}$ whose $(d+2)n$ facets
are
\begin{align} \label{eq:facets-Nd}
\sigma_i = & \, \{a_{i-j} \, : \, 0\leq j \leq d+1\}, \, \,
\mu_i = \{a_{i-j(d+3)} \, : \, 0\leq j\leq d+1\}, \nonumber \\
\alpha_{k,i} = & \, \{a_i\}\cup \{a_{i-j} \, : \, 2\leq j\leq d+2-k\} \cup \{a_{i-j(d+3)} \, : \, 2 \leq j \leq
k+1\},
\end{align}
$0\leq i\leq n-1$, $1\leq k\leq d$. The subscripts (except the first subscript on $\alpha$) are to be taken
modulo $n$. For all $d\geq 2$, ${\mathcal N}^{\hspace{.1mm}d+1}_{n}$ is a neighborly member of
$\overline{\mathcal K}(d+1)$ (see Lemma \ref{lemma:Nd}). We further define
\begin{equation}\label{eq:series-Nd}
N^{\hspace{.1mm}d}_{n} := \partial {\mathcal N}^{\hspace{.1mm}d+1}_{n}.
\end{equation}
Since ${\mathcal N}^{\hspace{.1mm}d+1}_{n}\in \overline{{\mathcal K}}(d+1)$,
we have $N^{\hspace{.1mm}d}_{n}\in {\mathcal
K}(d)$. By the similar arguments as in the case of  $M^{\hspace{.1mm}d}_{n}$, $N^{\hspace{.1mm}d}_{n}$ has $n$
vertices and is neighborly. }
\end{eg}

From the definition of ${\mathcal M}^{\hspace{.1mm}d+1}_{n}$ (resp., ${\mathcal N}^{\hspace{.1mm}d+1}_{n}$), the
permutation
\begin{align} \psi:= (a_0, a_1, \dots, a_{n-1}) \label{eq:psi}
\end{align}
is an automorphism of ${\mathcal M}^{\hspace{.1mm}d+1}_{n}$ (resp., ${\mathcal N}^{\hspace{.1mm}d+1}_{n}$). Since
any automorphism of ${\mathcal M}^{\hspace{.1mm}d+1}_{n}$ is an automorphism of $\partial {\mathcal
M}^{\hspace{.1mm}d+1}_{n}$, it follows that $\psi \in {\rm Aut}(M^{\hspace{.1mm}d}_{n})$. Similarly, $\psi \in
{\rm Aut}({\mathcal N}^{\hspace{.1mm}d+1}_{n}) \subseteq {\rm Aut}(N^{\hspace{.1mm}d}_{n})$.  Since the order of
$\psi$ is $n$, we get

\begin{lemma}\label{lemma:aut(MdNd1)}
$\mathbb{Z}_n$ acts vertex-transitively on ${\mathcal M}^{\hspace{.1mm}d+1}_{n}$, ${\mathcal N}^{\hspace{.1mm}d +
1}_{n}$, $M^{\hspace{.1mm}d}_{n}$ and $N^{\hspace{.1mm}d}_{n}$, respectively.
\end{lemma}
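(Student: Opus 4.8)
The plan is to verify directly that the $n$-cycle $\psi = (a_0, a_1, \dots, a_{n-1})$, which sends each $a_i$ to $a_{i+1}$ (indices modulo $n$), permutes the facets of ${\mathcal M}^{\hspace{.1mm}d+1}_{n}$ and of ${\mathcal N}^{\hspace{.1mm}d+1}_{n}$, then to transfer this to the boundaries and finally to read off transitivity on the vertex set.

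First I would check the three families of facets of ${\mathcal M}^{\hspace{.1mm}d+1}_{n}$ one at a time. Applying $\psi$ to $\sigma_i$ sends $\{a_{i-j} : 0\leq j\leq d+1\}$ to $\{a_{(i+1)-j} : 0\leq j\leq d+1\} = \sigma_{i+1}$; applied to $\mu_i$ it yields $\mu_{i+1}$; and applied to $\alpha_{k,i}$ it yields $\alpha_{k,i+1}$ (the index $k$ stays fixed, only $i$ advances). Each computation is just a uniform translation of every subscript by $1$, using that all subscripts are read modulo $n$. Since these three families exhaust the $(d+2)n$ facets and $\psi$ carries the facet set bijectively onto itself, $\psi$ is an automorphism of ${\mathcal M}^{\hspace{.1mm}d+1}_{n}$. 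The identical computation applied to the facets $\sigma_i$, $\mu_i$, $\alpha_{k,i}$ of (\ref{eq:facets-Nd}) shows $\psi \in {\rm Aut}({\mathcal N}^{\hspace{.1mm}d+1}_{n})$.

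Next, since $\psi$ preserves each $(d+1)$-complex, it preserves the collection of $d$-faces lying in a unique facet, hence it restricts to an automorphism of the boundary; this gives $\psi \in {\rm Aut}(M^{\hspace{.1mm}d}_{n})$ and $\psi \in {\rm Aut}(N^{\hspace{.1mm}d}_{n})$. As $\psi$ is an $n$-cycle it has order $n$, so $\langle \psi\rangle \cong \mathbb{Z}_n$ sits inside the automorphism group of each of the four complexes.

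Finally, for vertex-transitivity I would note that all four complexes share the common vertex set $\{a_0, \dots, a_{n-1}\}$ (for the boundaries this is because $f_0 = n$, as observed in Examples \ref{example:Md} and \ref{example:Nd}), and that $\psi^{\hspace{.1mm}k}(a_0) = a_k$ for $0\leq k\leq n-1$. Thus $\langle\psi\rangle$ already acts transitively on the vertices, which proves the claim. The only point that demands any care is the bookkeeping of subscripts modulo $n$ in the first step, but since every subscript is shifted by the same amount the verification is mechanical and poses no genuine obstacle.
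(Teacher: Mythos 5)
Your proposal is correct and follows essentially the same route as the paper: the paper likewise observes that the cyclic shift $\psi=(a_0,a_1,\dots,a_{n-1})$ permutes the facets of ${\mathcal M}^{\hspace{.1mm}d+1}_{n}$ and ${\mathcal N}^{\hspace{.1mm}d+1}_{n}$, hence restricts to an automorphism of the boundaries, and has order $n$, giving the transitive $\mathbb{Z}_n$-action. No gaps.
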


Observe that the induced automorphism $\bar{\psi}$ of $\Lambda({\mathcal M}^{\hspace{.1mm}d+1}_{n})$ (resp.,
$\Lambda({\mathcal N}^{\hspace{.1mm}d+1}_{n})$) is given by
\begin{align}
\bar{\psi} = ({\sigma}_0, \ldots, {\sigma}_{n-1})({\mu}_0, \ldots, {\mu}_{n-1}) \prod_{k=1}^{d}({\alpha}_{k,0},
\ldots, {\alpha}_{k,n-1}). \nonumber
\end{align}

We remark that triangulations of surfaces with cyclic automorphism group were also constructed by Ringel and
Youngs as part of their proof of the Map Color Theorem \cite[Chap 2,\ Sec 2.3]{Ringel}. As part of a series of
neighborly triangulations on $12s+7$ vertices, they obtained a neighborly triangulation of an orientable surface
on $19$ vertices with $\mathbb{Z}_{19}$ action. For $d=2$, the 19-vertex triangulated 2-manifold $M^2_{19}$
(resp., $N^2_{19}$) is obtained as the boundary of the triangulated 3-manifold ${\mathcal M}^3_{19}$ (resp.,
${\mathcal N}^3_{19}$). Our examples also exhibit $\mathbb{Z}_{19}$ 
action and are different
(non-isomorphic) from the one obtained in \cite{Ringel}. In the terminology of \cite[Chap 2,\  Sec 2.3]{Ringel},
the triangulations $R$ (Ringel), $M^2_{19}$ and $N^2_{19}$ are described by following cyclic permutations as
``row 0" (we identify the vertex $a_i$ with $i$).
\begin{align*}
R & : 1\  11\  14\  13\  15\  3\  8\  9\  7\  4\  17\  10\  18\  5\  16\ 12\  2\  6, \nonumber \\
M^2_{19} & : 1\  7\  3\  2\  11\  6\  18\  16\  4\  14\  8\  10\  15\  12\ 13\  5\  9\  17, \nonumber \\
N^2_{19} & : 1\  12\  3\  2\  6\  11\  18\  16\  9\  5\  13\  15\  10\  7\ 8\  14\  4\  17.
\end{align*}

Consider the 3-dimensional example $M^3_{29}$. Figure \ref{fig:lk_M_29} shows the link ${\rm lk}_{M^3_{29}}(a_0)$
of the vertex $a_0$ in $M^3_{29}$. Clearly, $\lk{M^3_{29}}{a_0}$ is a 28-vertex triangulation of the 2-sphere
$S^{\hspace{.15mm}2}$. By construction, we know that $\mathbb{Z}_{29}$ acts vertex-transitively on $M^3_{29}$.
These imply that the link of each vertex is a triangulated 2-sphere and hence $M^3_{29}$ is a triangulated
$3$-manifold. Here we prove

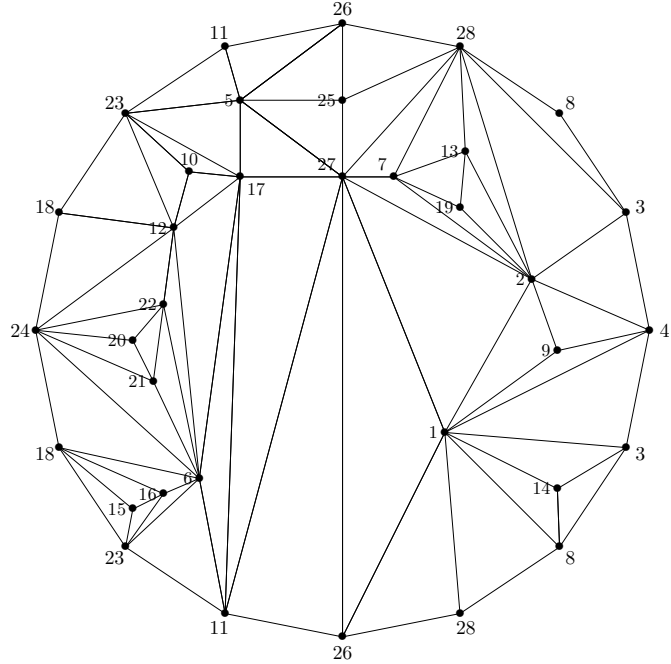
\begin{figure}[ht]
\centering \scalebox{0.68}{
\begin{tikzpicture}
\pgfmathsetmacro{\angle}{22.5};
\foreach \i/\j in {
    0/26,
    1/28,
    2/8,
    3/3,
    4/4,
    5/3,
    6/8,
    7/28,
    8/26,
    9/11,
    10/23,
    11/18,
    12/24,
    13/18,
    14/23,
    15/11
} {
    \draw (90-\angle*\i:6) node {$\bullet$};
    \draw (90-\angle*\i:6.3) node {$\j$};
}
\coordinate (V17) at (-2,3);
\coordinate (V27) at (0,3);
\coordinate (V7) at (1,3);
\coordinate (V10) at (-3,3.1);
\coordinate (V5) at (-2,4.5);
\coordinate (V12) at (-3.3,2);
\coordinate (V22) at (-3.5,0.5);
\coordinate (V21) at (-3.7,-1);
\coordinate (V6) at (-2.8,-2.9);
\coordinate (V13) at (2.4,3.5);
\coordinate (V19) at (2.3,2.4);
\coordinate (V2) at (3.7,1.0);
\coordinate (V9) at (4.2,-0.4);
\coordinate (V14) at (4.2,-3.1);
\coordinate (V1) at (2.0,-2.0);
\coordinate (V20) at (-4.1,-0.2);
\coordinate (V16) at (-3.5,-3.2);
\coordinate (V15) at (-4.1,-3.5);
\coordinate (V26) at (90:6);
\coordinate (V25) at (90:4.5);
\coordinate (V28) at (90-\angle:6);
\coordinate (V8) at (90-2*\angle:6);
\coordinate (V3) at (90-3*\angle:6);
\coordinate (V4) at (90-4*\angle:6);
\coordinate (V3b) at (90-5*\angle:6);
\coordinate (V8b) at (90-6*\angle:6);
\coordinate (V28b) at (90-7*\angle:6);
\coordinate (V26b) at (90-8*\angle:6);
\coordinate (V11b) at (90-9*\angle:6);
\coordinate (V23b) at (90-10*\angle:6);
\coordinate (V18b) at (90-11*\angle:6);
\coordinate (V24) at (90-12*\angle:6);
\coordinate (V18) at (90-13*\angle:6);
\coordinate (V23) at (90-14*\angle:6);
\coordinate (V11) at (90-15*\angle:6);

\draw (V17) node {$\bullet$};
\draw (V27) node {$\bullet$};
\draw (V7) node {$\bullet$};
\draw (V10) node {$\bullet$};
\draw (V5) node {$\bullet$};
\draw (V12) node {$\bullet$};
\draw (V22) node {$\bullet$};
\draw (V21) node {$\bullet$};
\draw (V6) node {$\bullet$};
\draw (V13) node {$\bullet$};
\draw (V19) node {$\bullet$};
\draw (V2) node {$\bullet$};
\draw (V9) node {$\bullet$};
\draw (V14) node {$\bullet$};
\draw (V20) node {$\bullet$};
\draw (V16) node {$\bullet$};
\draw (V15) node {$\bullet$};
\draw (V1) node {$\bullet$};
\draw (V25) node {$\bullet$};

\draw (V17) node[below right] {\small $17$};
\draw (V27) node[above left] {\small $27$};
\draw (V7) node[above left]  {\small $7$};
\draw (V10) node[above] {\small $10$};
\draw (V5) node[left] {\small $5$};
\draw (V12) node[left] {\small $12$};
\draw (V22) node[left] {\small $22$};
\draw (V21) node[left] {\small $21$};
\draw (V6) node[left] {\small $6$};
\draw (V13) node[left] {\small $13$};
\draw (V19) node[left] {\small $19$};
\draw (V2) node[left] {\small $2$};
\draw (V9) node[left] {\small $9$};
\draw (V14) node[left] {\small $14$};
\draw (V20) node[left] {\small $20$};
\draw (V16) node[left] {\small $16$};
\draw (V15) node[left] {\small $15$};
\draw (V1) node[left] {\small $1$};
\draw (V25) node[left] {\small $25$};

\draw (V27) -- (V2) -- (V9) -- (V4);
\draw (V3b) -- (V14) -- (V8b);
\draw (V26b) -- (V27);

\draw (V27) -- (V7) -- (V19) -- (V13) -- (V28) -- (V3);
\draw (V9) -- (V1) -- (V27);

\draw (V17) -- (V11b);
\draw (V23b) -- (V16) -- (V18b);
\draw (V24) -- (V21) -- (V22) -- (V12) -- (V17);

\draw (V11) -- (V5) -- (V17) -- (V10) -- (V12) -- (V18);
\draw (V18b) -- (V15) -- (V16) -- (V6) -- (V11b);

\draw (V17) -- (V5) -- (V26);
\draw (V28) -- (V7) -- (V2) -- (V1) -- (V26b);
\draw (V11b) -- (V17);

\draw (V3) -- (V2) -- (V13) -- (V7) -- (V27) -- (V5) -- (V26);
\draw (V26b) -- (V1) -- (V8b);

\draw (V26) -- (V5) -- (V23);
\draw (V23b) -- (V6) -- (V17) -- (V27) -- (V26b);

\draw (V10) -- (V23) -- (V5) -- (V27) -- (V11b) -- (V6) -- (V12) -- (V10);

\draw (V23) -- (V10) -- (V17) -- (V6) -- (V22) -- (V24);

\draw (V4) -- (V2) -- (V28);
\draw (V8b) -- (V14) -- (V1) -- (V4);

\draw (V26) -- (V25) -- (V27) -- (V17) -- (V23);

\draw (V24) -- (V6) (V15)--(V23b) (V23) -- (V12) -- (V24);

\draw (V18) -- (V12) -- (V22) -- (V20) -- (V21) -- (V6) -- (V18b);

\draw (V28) -- (V27) -- (V5) -- (V11);
\draw (V11b) -- (V27) -- (V1) -- (V28b);

\draw (V25) -- (V28) (V25) -- (V5)
      (V2) -- (V19)
      (V1) -- (V3b)
      (V20) -- (V24);

\draw (V26) -- (V28) -- (V8) -- (V3) -- (V4) -- (V3b) -- (V8b)
      -- (V28b) -- (V26b) -- (V11b) -- (V23b) -- (V18b) -- (V24)
      -- (V18) -- (V23) -- (V11) -- cycle;

\end{tikzpicture}
} \vspace{-2mm} \caption{Link of vertex $a_0$ in $M^{\hspace{.1mm}3}_{29}$ ($i$ stands for $a_i$)}
\label{fig:lk_M_29}
\end{figure}

\begin{lemma}\label{lemma:aut(M^3_29)}
The full automorphism group
of $M^{\hspace{.1mm}3}_{29}$ $($resp., $N^{\hspace{.1mm}3}_{29})$ is isomorphic to
$\mathbb{Z}_{29}$.
\end{lemma}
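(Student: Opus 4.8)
The plan is to prove that the stabilizer of the vertex $a_0$ in $G:={\rm Aut}(M^{3}_{29})$ is trivial; vertex-transitivity then forces $G\cong\mathbb{Z}_{29}$. By Lemma~\ref{lemma:aut(MdNd1)} the cyclic group $\langle\psi\rangle\cong\mathbb{Z}_{29}$ lies in $G$ and acts transitively on the $29$ vertices, so $G$ is itself vertex-transitive and the orbit--stabilizer theorem gives $|G|=29\cdot|G_{a_0}|$. As $29$ is prime, $\langle\psi\rangle$ acts regularly and $\langle\psi\rangle\cap G_{a_0}=\{{\rm id}\}$; hence it suffices to show $G_{a_0}=\{{\rm id}\}$, for then $|G|=29$ and $G=\langle\psi\rangle$.

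Next I would restrict to the link. Each $\varphi\in G_{a_0}$ fixes $a_0$ and therefore induces an automorphism $\varphi|_{L}$ of $L:={\rm lk}_{M^{3}_{29}}(a_0)$, the $28$-vertex stacked $2$-sphere of Figure~\ref{fig:lk_M_29}. Because $M^{3}_{29}$ is neighborly, every vertex other than $a_0$ is a vertex of $L$, so $\varphi|_{L}={\rm id}$ forces $\varphi={\rm id}$; thus $\varphi\mapsto\varphi|_{L}$ embeds $G_{a_0}$ into ${\rm Aut}(L)$ and the task becomes showing that no nontrivial automorphism of $L$ extends to $M^{3}_{29}$ fixing $a_0$.

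To cut down the candidates I would exploit $\mathbb{Z}_{29}$-invariance of the face set. Since all pairs are edges, I classify the edges through $a_0$ by the difference $c=\min\{i,29-i\}\in\{1,\dots,14\}$; the number of facets on $\{a_0,a_i\}$ --- equivalently the degree of $a_i$ in $L$ --- is $\mathbb{Z}_{29}$-invariant and depends only on $c$. Any $\varphi\in G_{a_0}$ preserves this degree, so it maps $a_i$ into its own difference class. Reading the $14$ degrees off Figure~\ref{fig:lk_M_29}, I would check that they are pairwise distinct; as each class consists of exactly the two vertices $a_c$ and $a_{-c}$, this pins $\varphi$ down to $\varphi(a_i)=a_{\pm i}$, with the sign constant on each class. (Should two degrees coincide, I would separate the classes by a finer $\mathbb{Z}_{29}$-invariant, e.g.\ the cyclic pattern of vertex degrees around the edge-link.) The resulting candidate maps form a subgroup of $(\mathbb{Z}/2)^{14}$, and I expect the triangle incidences through $a_0$ to force the sign to be the same on all classes, leaving only ${\rm id}$ and the reflection $\iota\colon a_i\mapsto a_{-i}$.

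The decisive and most laborious step is to rule out $\iota$. Here I would use ${\rm skel}_{2}(M^{3}_{29})={\rm skel}_{2}(\mathcal{M}^{4}_{29})$ from (\ref{eq:skel}) to exhibit, among the explicit facets $\sigma_i,\mu_i,\alpha_{k,i}$ of $\mathcal{M}^{4}_{29}$ in Example~\ref{example:Md}, a single triple that is a $2$-face of $M^{3}_{29}$ whose image under $\iota$ is contained in no facet and hence is not a face; this contradicts $\iota\in{\rm Aut}(M^{3}_{29})$. With $\iota$ (and any mixed-sign variant, eliminated the same way) excluded, $G_{a_0}=\{{\rm id}\}$ and ${\rm Aut}(M^{3}_{29})\cong\mathbb{Z}_{29}$; the identical argument applied to the facets of $\mathcal{N}^{4}_{29}$ in Example~\ref{example:Nd} settles $N^{3}_{29}$. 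The whole argument is a finite verification, and its only real difficulty is the bookkeeping: confirming that the fourteen edge-degrees separate the classes and locating the specific face that $\iota$ destroys.
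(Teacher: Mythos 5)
Your overall skeleton --- reduce to showing the stabilizer of $a_0$ is trivial, restrict to the link $L={\rm lk}_{M^3_{29}}(a_0)$ using neighborliness, and exploit the $\mathbb{Z}_{29}$-invariance of vertex degrees in $L$ --- coincides with the paper's proof up to the point where the real work begins. But your central combinatorial claim is false, and provably so without consulting Figure~\ref{fig:lk_M_29}: $L$ is a triangulated $2$-sphere on $28$ vertices, hence has $3\cdot 28-6=78$ edges, and the vertex degrees sum to $156$. Since (as you correctly argue) the degree is constant on each difference class $\{a_c,a_{-c}\}$, the $14$ class-degrees sum to $78$; but fourteen pairwise distinct integers, each at least $3$, would sum to at least $3+4+\cdots+16=133$. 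So the degrees cannot separate the classes --- most of them must coincide --- and the step on which your reduction to $\varphi(a_i)=a_{\pm i}$ rests collapses. Your parenthetical fallback (``a finer $\mathbb{Z}_{29}$-invariant'') is exactly where the difficulty lives and is not carried out; the elimination of the reflection $\iota$ is likewise only promised, not exhibited.

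The paper's proof avoids any global classification of the stabilizer's candidates. It uses only that degree $7$ is attained in $L$ by exactly one class, $\{a_{12},a_{17}\}$, so any $\beta$ fixing $a_0$ preserves that pair; this forces $\beta$ to preserve $\{a_6,a_{10}\}$ and then to fix $a_6$. Next, ${\rm lk}_{L}(a_6)$ is a $9$-cycle whose vertices are distinguished by their degrees in $L$ (e.g.\ $\deg(a_{22})=5$, $\deg(a_{11})=6$), so $\beta$ is the identity on that cycle, hence on every face of $L$ containing $a_6$, and the pseudomanifold connectivity of $L$ propagates this to all of $L$ and therefore to all of $M^3_{29}$. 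If you wish to salvage your route, you must replace ``pairwise distinct degrees'' by an invariant that genuinely separates the fourteen classes and then still explicitly rule out $a_i\mapsto a_{-i}$; as written, the proposal does not prove the lemma.
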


\begin{proof}
We present a proof for $M^{\hspace{.2mm}3}_{29}$. Similar arguments work for $N^{\hspace{.2mm}3}_{29}$.

Since $\mathbb{Z}_{29}$ acts vertex-transitively, it is sufficient to show that the stabilizer of the vertex
$a_0$ in ${\rm Aut}(M^{\hspace{.1mm}3}_{29})$ is the trivial subgroup.

Let $\beta\in {\rm Aut}(M^{\hspace{.1mm}3}_{29})$ and $\beta(a_0) =a_0$. So, $\beta\in {\rm
Aut}(\lk{M^3_{29}}{a_0})$. For $1\leq i\leq 28$, let $\deg(a_i)$ denote the number of edges through $a_i$ in
$\lk{M^3_{29}}{a_0}$. Clearly, $\deg(\beta(a_i)) = \deg(a_i)$. Since $\deg(a_i) = 7$ for $i= 12$ and 17,
$\beta(\{a_{12}, a_{17}\}) = \{a_{12}, a_{17}\}$ and hence $\beta(\{a_6, a_{10}\}) = \{a_{6}, a_{10}\}$. This
implies that $\beta(a_6) = a_6$ and hence $\beta(\lk{\lk{M^3_{29}}{a_0}}{a_6}) = \lk{\lk{M^3_{29}}{a_0}}{a_6}$.
Since $\lk{\lk{M^3_{29}}{a_0}}{a_6}$ is the 9-cycle $a_{11}a_{17}a_{12}a_{22}a_{21}a_{24}a_{18}a_{16}a_{23}
a_{11}$, $\deg(a_{22})=5$ and $\deg(a_{11}) =6$, it follows that $\beta$ is identity on
$\lk{\lk{M^3_{29}}{a_0}}{a_6}$. Then $\beta(\sigma) = \sigma$ for all simplices $\sigma$ in $\lk{M^3_{29}}{a_0}$
containing the vertex $a_6$. Since $\lk{M^3_{29}}{a_0}$ is a pseudomanifold, this implies that $\beta$ is the
identity on $\lk{M^3_{29}}{a_0}$. This proves that ${\rm Aut}(M^3_{29})$ is isomorphic to $\mathbb{Z}_{29}$.
\end{proof}

For the geometric carriers of $M^{\hspace{.1mm}3}_{29}$ and $N^{\hspace{.1mm}3}_{29}$, we have

\begin{lemma}\label{lemma:|(M^3_29|}
The simplicial complex $M^{\hspace{.1mm}3}_{29}$ $($resp., $N^{\hspace{.1mm}3}_{29})$ is obtained from a stacked
$3$-sphere by $30$ combinatorial handle additions.
\end{lemma}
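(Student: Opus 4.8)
The plan is to separate the claim into a homological computation and a purely combinatorial realization. First I would determine the first Betti number. By Example~\ref{example:Md} we have $M^3_{29}=\partial{\mathcal M}^{4}_{29}$ with ${\mathcal M}^{4}_{29}$ a neighborly member of $\overline{\mathcal K}(4)$, so $M^3_{29}$ is a neighborly member of ${\mathcal K}(3)$ satisfying condition~(ii) of Proposition~\ref{prop:tight3}. That proposition then forces condition~(iii), giving $\beta_1(M^3_{29};\mathbb{Z}_2)=(f_0-4)(f_0-5)/20=25\cdot 24/20=30$ (and simultaneously that $M^3_{29}$ is tight). The identical reasoning applied to $N^3_{29}=\partial{\mathcal N}^{4}_{29}$ yields $\beta_1(N^3_{29};\mathbb{Z}_2)=30$. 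It therefore remains to exhibit each of $M^3_{29},N^3_{29}$ as a stacked $3$-sphere to which $30$ combinatorial handles have been added.

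For this realization Proposition~\ref{prop:kalai} is not directly available, since it is stated only for $d\ge 4$; the dimension $d=3$ is exactly the case I must treat separately. What I would use is the $3$-dimensional analogue: \emph{every connected member of ${\mathcal K}(3)$ is obtained from a stacked $3$-sphere by $\beta_1$ combinatorial handle additions}. Granting this, the lemma is immediate from $\beta_1=30$. This analogue is implicit in Walkup's original analysis of the class ${\mathcal K}(3)$ in \cite{wa}, and the fact that a combinatorial handle addition along an admissible map keeps us inside ${\mathcal K}(d)$ is recorded in \cite{bd9}; so one legitimate route is simply to quote the $d=3$ structure result.

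If a self-contained proof is wanted, I would establish the $d=3$ statement by downward induction on $\beta_1$ via \emph{reverse} handle additions. A combinatorial handle addition on a $3$-complex deletes two disjoint tetrahedra and identifies their boundaries, so the result contains an induced standard sphere $S^{\,2}_4=\partial\sigma$ whose four vertices carry all six edges and all four triangles but span no facet. Conversely, in a member $M$ of ${\mathcal K}(3)$ with $\beta_1>0$ I would locate such a non-separating induced $S^{\,2}_4$, cut $M$ along it, and cap the two resulting copies with tetrahedra; the admissibility (no common neighbours) keeps the new complex in ${\mathcal K}(3)$ while lowering $\beta_1$ by one. After $30$ steps one reaches a member of ${\mathcal K}(3)$ with $\beta_1=0$, which is a stacked $3$-sphere. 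The crux — and the step I expect to be the main obstacle — is precisely this existence claim: that a member of ${\mathcal K}(3)$ with positive $\beta_1$ always contains a two-sided, non-separating induced $S^{\,2}_4$ along which cutting stays in ${\mathcal K}(3)$. Here I would exploit that the vertex-links are stacked $2$-spheres (to control which $4$-subsets bound such spheres) together with neighborliness (which supplies all the required edges), and use the $\mathbb{Z}_{29}$-action to cut down the search; note, however, that $30$ is not a multiple of $29$, so the handles cannot be taken symmetrically and the induction must be bookkept by hand.
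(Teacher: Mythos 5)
Your computation of $\beta_1(M^3_{29};\mathbb{Z}_2)=30$ via Proposition~\ref{prop:tight3} is correct, but it is not where the difficulty of this lemma lies. The substance of the statement is the \emph{realization} of $M^3_{29}$ as a stacked $3$-sphere with $30$ handles attached, and your route reduces that to the assertion that every connected member of ${\mathcal K}(3)$ is obtained from a stacked $3$-sphere by $\beta_1$ combinatorial handle additions. That assertion is not a quotable theorem: it is not proved in Walkup's paper (his class is \emph{defined} by handle additions; he does not show that every locally stacked $3$-manifold belongs to it), and its unavailability for $d=3$ is precisely why Proposition~\ref{prop:kalai} is stated only for $d\geq 4$ and why the present lemma has to be proved by hand at all. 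Already the base case of your induction is suspect: a member of ${\mathcal K}(3)$ with $\beta_1=0$ is a triangulated $3$-sphere all of whose vertex links are stacked $2$-spheres, and such a sphere need not itself be stacked (the link-to-ball correspondence that drives Kalai's argument fails in this dimension, a stacked $2$-sphere bounding more than one stacked $3$-ball). Your fallback ``reverse handle'' induction hinges on the existence, in every member of ${\mathcal K}(3)$ with $\beta_1>0$, of a two-sided non-separating induced $S^{\,2}_4$ along which one can cut and recap while staying in ${\mathcal K}(3)$; you flag this yourself as the main obstacle and give no proof, and it is essentially equivalent to the general structure theorem you are trying to avoid. So the argument is circular at its core: the lemma is exactly the $d=3$ instance that the missing general theorem would supply.

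The paper's proof is entirely constructive and makes no appeal to any structure theorem. It exhibits two explicit stacked $4$-balls $B_1$ and $B_2$ (the first with path dual graph, the second with a comb-shaped tree as dual graph, both certified by Lemma~\ref{lemma:stackedball}), glues them along one $3$-simplex to get a stacked $4$-ball $B$, and sets $S=\partial B$, a $149$-vertex stacked $3$-sphere. It then lists $30$ explicit bijections $\psi_i\colon\alpha_i\to\beta_i$ between disjoint $3$-simplices, verifies admissibility of each by computing the relevant vertex neighborhoods (so that each handle addition stays inside ${\mathcal K}(3)$ by the criterion recalled in Section~\ref{sec:pre}), and checks that the resulting complex $M_{30}$ is isomorphic to $M^3_{29}$ (note $149-30\cdot 4=29$). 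If you want to prove this lemma, you must either reproduce such an explicit decomposition or first prove the $d=3$ structure theorem you are invoking --- and the latter is a much harder (and, in the generality you state it, false) claim.
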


\begin{proof}
We present a proof for $M^{\hspace{.2mm}3}_{29}$. Similar arguments work for $N^{\hspace{.2mm}3}_{29}$.

Consider the pure 4-dimensional simplicial complexes $B_1$ and $B_2$ on the vertex sets $V_1 = \{a_j \, : -4 \leq
j\leq 28\}$ and $V_2 = \{b_0, b_1, \dots, b_{28}, b_{34}, b_{40}, b_{46}, b_{52}\} \cup\{u_{j} \, : \, -4\leq j
\leq 24\} \cup\{v_{j} \, : \, -3\leq j \leq 25\} \cup\{w_{j} \, : \, -2\leq j\leq 26\}$, respectively, given by
\begin{align}
B_1 = \{\widetilde{\sigma}_i \, : \, 0\leq i\leq 28\}, & \, \,
B_2  = \{\widetilde{\mu}_i \, : \, 0\leq i\leq 28\} \cup
 \{\widetilde{\alpha}_{k,i} \, : \, 0\leq i\leq 28, \, 1\leq k\leq 3\},
\end{align}
where $\widetilde{\sigma}_i := \{a_{i-j} \, : 0\leq j\leq 4\}$, $\widetilde{\mu}_i := \{b_{i+5+6j} \, : 0\leq
j\leq 4\}$, $\widetilde{\alpha}_{3,i} :=\{w_{i-2}, b_i, b_{i+5}, b_{i+11}$, $b_{i+17}\}$,
$\widetilde{\alpha}_{2,i} :=\{v_{i-3}, w_{i-2}, b_i, b_{i+5}, b_{i+11}\}$, $\widetilde{\alpha}_{1,i} :=
\{u_{i-4}, v_{i-3}, w_{i-2}, b_i, b_{i+5}\}$, for $m\geq 29$ and $m\neq 34, 40, 46, 52$ we have $b_m :=
b_{m-29}$. Clearly, $B_1$ is a stacked 4-ball whose dual graph is a path. The dual graph of $B_2$ is a tree (a
comb with 29 teeth) with 116 vertices. Since $B_2$ has 120 vertices, by Lemma \ref{lemma:stackedball}, $B_2$ is a
stacked 4-ball. Let $B$ be the simplicial complex obtained from $B_1 \sqcup B_2$ by identifying the 3-simplices
$\alpha_0 = a_{-4}a_{-3}a_{-2}a_0$ and $\beta_0= u_{-4}v_{-3}w_{-2}b_0$ by the map $\psi \colon u_{-4}\mapsto
a_{-4}$, $v_{-3}\mapsto a_{-3}$, $w_{-2}\mapsto a_{-2}$, $b_0 \mapsto a_{0}$. Again by Lemma
\ref{lemma:stackedball}, $B$ is a stacked 4-ball.  Let $S := \partial B$. Then $S$ is a stacked 3-sphere with 149
vertices.

Now consider the sixty 3-simplices $\alpha_i = \{u_{i-4}, v_{i-3}, w_{i-2}, b_{i}\}$, $\beta_i = \{a_{i-4},
a_{i-3}, a_{i-2}, a_{i}\}$, $1\leq i\leq 28$, $\alpha_{29} = \{a_{-4}, a_{-3}, a_{-2}, a_{-1}\}$, $\beta_{29} =
\{a_{25}, a_{26}, a_{27}, a_{28}\}$, $\alpha_{30} = \{b_{34}, b_{40}, b_{46}, b_{52}\}$, $\beta_{30} = \{b_{5},
b_{11}, b_{17}, b_{23}\}$ and the 30 maps $\psi_{i} \colon \alpha_i \to \beta_i$ given by $\psi_i(u_{i-4}) =
a_{i-4}$, $\psi_i(v_{i-3}) = a_{i-3}$, $\psi_i(w_{i-2}) = a_{i-2}$, $\psi_i(b_{i}) = a_{i}$, for $1\leq i\leq
28$, $\psi_{29}(a_j) = a_{j+29}$, $\psi_{30}(b_j) = b_{j-29}$. For $1\leq i\leq 30$, let $M_i= M_{i-1}^{\psi_i}$,
where $M_0=S$. For $0\leq j\leq 30$, let $N_j(x)$ denote the set of neighbors of $x$ in $M_j$. Then
$N_{i-1}(a_i)\setminus \beta_i = \{a_{i+k} : -4\leq k \leq 4, k\neq 0\}$, $N_{i-1}(b_i)\setminus \alpha_i =
\{b_{i+5+6k} : 0\leq k \leq 8, k\neq 4\}$. Therefore, $N_{i-1}(a_i) \cap N_{i-1}(b_i) =\emptyset$. Similarly,
$N_{i-1}(a_{i-2}) \cap N_{i-1}(w_{i-2})  = N_{i-1}(a_{i-3}) \cap N_{i-1}(v_{i-3}) =N_{i-1}(a_{i-4}) \cap
N_{i-1}(u_{i-4}) =\emptyset$. Thus $\psi_i$ is admissible for $1\leq i\leq 28$. Similarly, we can show that
$\psi_{29}$ and $\psi_{30}$ are admissible. Since $M_0=S\in {\mathcal K}(3)$, inductively it follows that
$M_{30}\in {\mathcal K}(3)$. It is now easy to see that $M_{30}$ is isomorphic to $M^{\hspace{.1mm}3}_{29}$. This
completes the proof.
\end{proof}

If $\sim$ is the equivalence relation generated by $x\sim \psi_i(x)$ for $x\in \alpha_i$, $1\leq i\leq 30$, then
the quotient complex $B/\!\sim$ is isomorphic to ${\mathcal M}^{\hspace{.1mm}4}_{29}$, where $B$ and $\psi_i$ are
as in the above proof. In Lemma \ref{lemma:orientability}, we show that $M^{\hspace{.1mm}3}_{29}$ is
non-orientable.

\section{Construction in $\overline{\mathcal K}(d)$} \label{sec:cons}

In this section, we present constructions of neighborly members of $\overline{{\mathcal K}}(d+1)$. In particular,
we construct manifolds in $\overline{{\mathcal K}}(d+1)$ whose boundaries are $M^{\hspace{.1mm}d}_{n}$ and
$N^{\hspace{.1mm}d}_{n}$, respectively. Our constructions are based on Lemma \ref{lemma:construction} below
(\cite{bdns1}). Given a graph $G$ and a family ${\mathcal T}=\{T_i\}_{i\in {\mathcal I}}$ of induced subtrees of
$G$, we say that $u \in V(G)$ {\em defines} the subset $\hat{u} := \{i \in {\mathcal I} : u \in V(T_i)\}$ of
$\mathcal I$.

\begin{lemma}\label{lemma:construction}
Let $G$ be a finite graph and ${\mathcal T} = \{T_i\}_{i=1}^{n}$ be a family of $(n-d)$-vertex induced subtrees
of $G$. Suppose that {\rm (i)} any two of the $T_i$'s intersect, {\rm (ii)} each vertex of $G$ is in exactly
$d+1$ members of $\mathcal T$ and {\rm (iii)} for any two vertices $u\neq v$ of $G$, $u$ and $v$ are together in
exactly $d$ members of $\cal T$ if and only if $uv$ is an edge of $G$. Then the pure simplicial complex $M$ whose
facets are $\hat{u}$, where $u \in V(G)$, is an $n$-vertex neighborly member of $\overline{\mathcal K}(d)$, with
$\Lambda(M)\cong G$.
\end{lemma}

\begin{proof}
First we prove that $\hat{u} \neq \hat{v}$ for $u\neq v$ in $V(G)$. Assume that $\hat{u} = \hat{v}$ for some
$u\neq v$. Let $P$ be a $u \mbox{-}v$ path in $G$. Let $w$ be the neighbor of $u$ on $P$. Then $uw$ is an edge of
$G$ and hence $d = \#(\hat{u} \cap \hat{w}) = \#(\hat{v} \cap \hat{w})$. Therefore $wv$ is an edge in $G$. Let
$i\in \hat{u} \setminus \hat{w}= \hat{v} \setminus \hat{w}$. Let $Q$ be the $u\mbox{-}v$ path in the tree $T_i$.
Let $z$ be the neighbor of $u$ on $Q$. Since $i\not\in \hat{w}$, $z\neq w$. As before, we have $d = \#(\hat{u}
\cap \hat{z}) = \#(\hat{v} \cap \hat{z})$. Therefore, $zv$ is an edge in $G$. Since $d\geq 2$, it follows that
$\hat{u} \cap \hat{w} \cap \hat{z} \neq \emptyset$. Let $j\in \hat{u} \cap \hat{w} \cap \hat{z}$. Since $\hat{u}
= \hat{v}$, it follows that $j$ is in $\hat{u}$, $\hat{v}$, $\hat{w}$ and $\hat{z}$. Then $T_j$ contains $u, v,
w$ and $z$. Since $T_j$ is an induced subgraph it contains the cycle $uwvzu$, a contradiction to the fact that
$T_j$ is a tree.

Let ${S} \subseteq \{1, \dots, n\}$ be of size $d$. We show that at most two facets of $M$ contain $S$. If
possible, let $\hat{u}$, $\hat{v}$ and $\hat{w}$ be three facets of $M$ that contain $S$. Then by assumption,
$uv$, $uw$ and $vw$ are edges in $G$. Let $i\in {S}$. Then $u$, $v$
and $w$ are vertices of $T_i$. Since $T_i$ is
induced subgraph, we conclude that $uv$, $uw$, $vw$ are edges of $T_i$, which is a contradiction to the fact that
$T_i$ is a tree. Thus $M$ is a $d$-dimensional weak pseudomanifold. Clearly $u\mapsto \hat{u}$ is an isomorphism
between $G$ and $\Lambda(M)$. Further the conditions on $(G, \mathcal T)$ imply that $G$ is connected. Thus $M$
is a $d$-pseudomanifold. Since any two members of $\mathcal T$ intersect, it follows that $M$ is neighborly. Let
$S_i = \st{M}{i}$ be the star of the vertex $i$ in $M$. Then by construction $\Lambda(S_i) \cong T_i$ and thus
$f_d(S_i) = \#(V(T_i)) = n-d$. Also from the neighborliness of $M$, $f_0(S_i)=n$. Thus $f_0(S_i)=f_d(S_i)+d$ and
hence, by Lemma \ref{lemma:stackedball}, $S_i$ is a stacked $d$-ball. Therefore, by Corollary \ref{cor:C0},
$\lk{M}{i}$ is a stacked $(d-1)$-ball and hence $M$ is a member of $\overline{\mathcal K}(d)$.
\end{proof}

We consider two examples of intersecting families of induced subtrees
of a graph which we use to show that ${\mathcal M}^{\hspace{.1mm}d
+1}_{n}$
and ${\mathcal N}^{\hspace{.1mm}d+1}_{n}$ are in $\overline{\mathcal
K}(d+1)$ (cf. Lemmata \ref{lemma:Md} and \ref{lemma:Nd}).

\begin{eg}\label{example:Gd}
{\rm Let $d\geq 2$ and $n = d^{\hspace{.3mm}2}+5d+5$. Consider the graph $G^{\hspace{.1mm}d}$ on $n(d+2)$
vertices consisting of two $n$-cycles $C_1$, $C_2$ and $n$ disjoint paths $P_{i}$, $0\leq i\leq n-1$, given by
\begin{align}\label{eq:graph}
C_1 =\sigma_0\sigma_1\cdots\sigma_{n-1}\sigma_0, \, C_2=\mu_0\mu_{d+3}\mu_{2(d+3)}\cdots\mu_{(n-1)(d+3)}\mu_0, \,
P_i =\sigma_i\alpha_{1,i}\alpha_{2,i}\cdots\alpha_{d,i}\mu_i,
\end{align}
where the subscripts (except the first subscript on $\alpha$) are to be
taken modulo $n$. Let ${\mathcal T}_1=\{T_i\}_{i=0}^{n-1}$ be the family of induced trees where the
vertex-set $V(T_i)$ of $T_i$ is given by
\begin{align}\label{eq:inducedsets}
V(T_i) =  & \{\sigma_{i+j}: 0\leq j\leq d+1\} \cup
            \{\mu_{i+j(d+3)}: 0\leq j\leq d+1\} \cup
            \{\alpha_{j,i}: 1\leq j\leq d\}\cup \nonumber \\
 & (\bigcup_{k=2}^{d+1} \{\alpha_{j,i+k}: 1\leq j\leq d+2-k\}) \cup
 (\bigcup_{k=2}^{d+1} \{\alpha_{j,i+k(d+3)}: d+2-k\leq j\leq d\}),
\end{align}
see Figure \ref{fig:fig1}. Figure \ref{fig:fig2} shows the graph $G^{\hspace{.1mm}4}$ with the tree $T_0$ in
black.}
\end{eg}

\begin{figure}[ht]
\centering
\begin{tikzpicture}[thick]
\draw (0,0) -- (0,1); \draw [dashed] (0,1) -- (0,4); \draw (0,4) -- (0,5) -- (1,5) -- (3,5); \draw [dashed] (3,5)
-- (5,5); \draw (5,5) -- (6,5) -- (6,4)
      (0,0) -- (2.5,0);
\draw [dashed] (2.5,0) -- (5.5,0); \draw (5.5,0) -- (6.5,0) -- (6.5,1); \draw [dashed] (6.5,1) -- (6.5,3); \draw
(6.5,3) -- (6.5,4); \draw (2,5) -- (2,4); \draw [dashed] (2,4) -- (2,2); \draw (2,2) -- (2,1); \draw (3,5) --
(3,4); \draw [dashed] (3,4) -- (3,3); \draw (3,3) -- (3,2); \draw (5.5,0) -- (5.5,1); \draw [dashed] (5.5,1) --
(5.5,2); \draw (5.5,2) -- (5.5,3); \draw (2.5,0) -- (2.5,1); \draw (5,5) -- (5,3);

\draw (0,0) node {$\bullet$}; \draw (1.5,0) node {$\bullet$}; \draw (2.5,0) node {$\bullet$}; \draw (5.5,0) node
{$\bullet$}; \draw (6.5,0) node {$\bullet$}; \draw (0,1) node {$\bullet$}; \draw (0,4) node {$\bullet$}; \draw
(0,5) node {$\bullet$}; \draw (1,5) node {$\bullet$}; \draw (2,5) node {$\bullet$}; \draw (5,5) node {$\bullet$};
\draw (6,5) node {$\bullet$}; \draw (2,5) node {$\bullet$}; \draw (2,2) node {$\bullet$}; \draw (2,1) node
{$\bullet$}; \draw (3,5) node {$\bullet$}; \draw (3,4) node {$\bullet$}; \draw (3,3) node {$\bullet$}; \draw
(2,4) node {$\bullet$}; \draw (6,4) node {$\bullet$}; \draw (3,3) node {$\bullet$}; \draw (2.5,1) node
{$\bullet$}; \draw (3,2) node {$\bullet$}; \draw (6.5,1) node {$\bullet$}; \draw (6.5,3) node {$\bullet$}; \draw
(5.5,3) node {$\bullet$}; \draw (6.5,4) node {$\bullet$}; \draw (5.5,1) node {$\bullet$}; \draw (5.5,2) node
{$\bullet$}; \draw (5.5,3) node {$\bullet$}; \draw (5,4) node {$\bullet$}; \draw (5,3) node {$\bullet$};

{\small \draw (0,0) node [below left] {$\mu_i$}; \draw (1.5,0) node [below] {$\mu_{i+d+3}$}; \draw (2.5,0) node
[below right] {$\mu_{i+2(d+3)}$}; \draw (5.5,0) node [below] {$\mu_{i+d(d+3)}$}; \draw (6.5,0) node [below right]
{$\mu_{i+(d+1)(d+3)}$}; \draw (0,1) node [left] {$\alpha_{d,i}$}; \draw (0,4) node [left] {$\alpha_{1,i}$}; \draw
(0,5) node [above left] {$\sigma_i$}; \draw (1,5) node [above] {$\sigma_{i+1}$}; \draw (2,5) node [above]
{$\sigma_{i+2}$}; \draw (3,5) node [above] {$\sigma_{i+3}$}; \draw (5,5) node [above] {$\sigma_{i+d}$}; \draw
(6,5) node [above right] {$\sigma_{i+d+1}$}; \draw (2,1) node [left] {$\alpha_{d,i+2}$}; \draw (2.5,1) node
[right] {$\alpha_{d,i+2(d+3)}$}; \draw (3,2) node [right] {$\alpha_{d-1,i+3}$}; \draw (5,3) node [left]
{$\alpha_{2,i+d}$}; \draw (6.5,4) node [right] {$\alpha_{1,i+(d+1)(d+3)}$};

 }
\end{tikzpicture}
\caption{Schematic representation of the tree $T_i$ in ${\cal T}_1$.} \label{fig:fig1}
\end{figure}
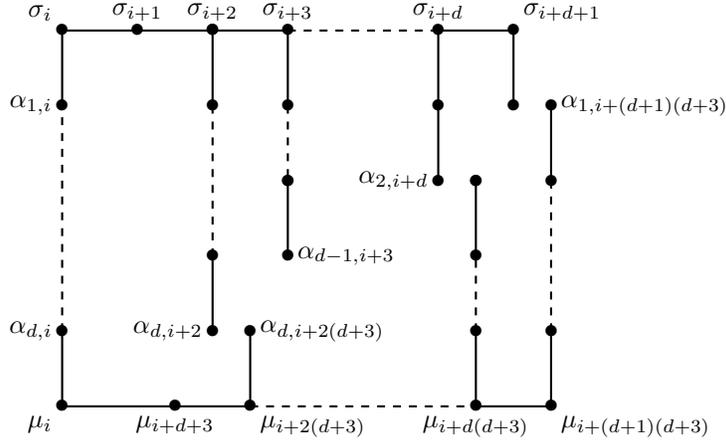


\begin{eg}\label{example:Hd}
{\rm Let $d\geq 2$ and $n = d^{\hspace{.3mm}2}+5d+5$. Let $G^{d}$ be the graph as defined in Example
\ref{example:Gd}. Further let, ${\cal T}_2 = \{T_i\}_{i=1}^n$ be the family of induced subtrees of $G^d$, where
the vertex-set $V(T_i)$, of the tree $T_i$ is given by (see Figure \ref{fig:fig3}):
\begin{align}\label{eq:inducedsets2}
V(T_i) =  & \{\sigma_{i+j}: 0\leq j\leq d+1\} \cup
            \{\mu_{i+j(d+3)}: 0\leq j\leq d+1\} \cup
            \{\alpha_{j,i}: 1\leq j\leq d\}\cup \nonumber \\
          & (\bigcup_{k=2}^{d+1} \{\alpha_{j,i+k}: 1\leq j\leq d+2-k\})
           \cup(\bigcup_{k=2}^{d+1} \{\alpha_{j,i+k(d+3)} :
k-1\leq j\leq d\}).
\end{align}
}
\end{eg}

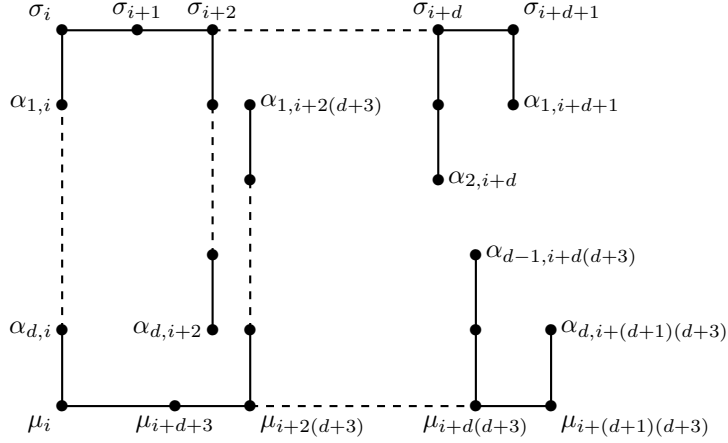
\begin{figure}[hb]
\centering
\begin{tikzpicture}[thick]
\draw (0,0) -- (0,1); \draw [dashed] (0,1) -- (0,4); \draw (0,4) -- (0,5) -- (1,5) -- (2,5); \draw [dashed] (2,5)
-- (5,5); \draw (5,5) -- (6,5) -- (6,4) (0,0) -- (2.5,0); \draw [dashed] (2.5,0) -- (5.5,0); \draw (5.5,0) --
(6.5,0) -- (6.5,1);

\draw (2,5) -- (2,4); \draw [dashed] (2,4) -- (2,2); \draw (2,2) -- (2,1); \draw (2.5,0) -- (2.5,1); \draw
[dashed] (2.5,1) -- (2.5,3); \draw (2.5,3) -- (2.5,4); \draw (5.5,0) -- (5.5,2); \draw (5,5) -- (5,3);

\draw (0,0) node {$\bullet$}; \draw (1.5,0) node {$\bullet$}; \draw (2.5,0) node {$\bullet$}; \draw (5.5,0) node
{$\bullet$}; \draw (6.5,0) node {$\bullet$}; \draw (0,1) node {$\bullet$}; \draw (0,4) node {$\bullet$}; \draw
(0,5) node {$\bullet$}; \draw (1,5) node {$\bullet$}; \draw (2,5) node {$\bullet$}; \draw (5,5) node {$\bullet$};
\draw (6,5) node {$\bullet$}; \draw (2,2) node {$\bullet$}; \draw (2,1) node {$\bullet$}; \draw (2,4) node
{$\bullet$}; \draw (6,4) node {$\bullet$}; \draw (2.5,1) node {$\bullet$}; \draw (2.5,3) node {$\bullet$}; \draw
(2.5,4) node {$\bullet$}; \draw (6.5,1) node {$\bullet$}; \draw (5.5,1) node {$\bullet$}; \draw (5.5,2) node
{$\bullet$}; \draw (5,4) node {$\bullet$}; \draw (5,3) node {$\bullet$};

{\small \draw (0,0) node [below left] {$\mu_i$}; \draw (1.5,0) node [below] {$\mu_{i+d+3}$}; \draw (2.5,0) node
[below right] {$\mu_{i+2(d+3)}$}; \draw (5.5,0) node [below] {$\mu_{i+d(d+3)}$}; \draw (6.5,0) node [below right]
{$\mu_{i+(d+1)(d+3)}$}; \draw (0,1) node [left] {$\alpha_{d,i}$}; \draw (0,4) node [left] {$\alpha_{1,i}$}; \draw
(0,5) node [above left] {$\sigma_i$}; \draw (1,5) node [above] {$\sigma_{i+1}$}; \draw (2,5) node [above]
{$\sigma_{i+2}$}; \draw (5,5) node [above] {$\sigma_{i+d}$}; \draw (6,5) node [above right] {$\sigma_{i+d+1}$};
\draw (2,1) node [left] {$\alpha_{d,i+2}$}; \draw (6.5,1) node [right] {$\alpha_{d,i+(d+1)(d+3)}$}; \draw (6,4)
node [right] {$\alpha_{1,i+d+1}$}; \draw (5,3) node [right] {$\alpha_{2,i+d}$}; \draw (5.5,2) node [right]
{$\alpha_{d-1,i+d(d+3)}$}; \draw (2.5,4) node [right] {$\alpha_{1,i+2(d+3)}$};
 }
\end{tikzpicture}
\caption{Schematic representation of the tree $T_i$ in
 ${\cal T}_2$.}
\label{fig:fig3}
\end{figure}


\begin{lemma}\label{lemma:neighborly}
Let $d\geq 2$ and $n = d^{\hspace{.3mm}2}+5d+5$. Let the graph $G^{\hspace{.1mm}d}$ and
the family of induced subtrees ${\mathcal
T}_1=\{T_i\}_{i=0}^{n-1}$ be as in Example $\ref{example:Gd}$. Then
$T_i\cap T_j\neq \emptyset$ for all $0\leq i,j\leq n-1$.
\end{lemma}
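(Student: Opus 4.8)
The plan is to exploit the cyclic structure of $(G^{\hspace{.1mm}d},{\mathcal T}_1)$. The index-shift $\sigma_c\mapsto\sigma_{c+1}$, $\mu_c\mapsto\mu_{c+1}$, $\alpha_{k,c}\mapsto\alpha_{k,c+1}$ is a bijection of $V(G^{\hspace{.1mm}d})$ that carries $V(T_i)$ onto $V(T_{i+1})$, so $T_i\cap T_j\neq\emptyset$ if and only if $T_0\cap T_{j-i}\neq\emptyset$; thus it suffices to prove $T_0\cap T_m\neq\emptyset$ for every $m\in\mathbb Z_n$. Writing, as in Lemma \ref{lemma:construction}, $\hat u=\{i:u\in V(T_i)\}$ for each vertex $u$, a common vertex of $T_0$ and $T_m$ is exactly a $u$ with $0,m\in\hat u$. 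Since $\widehat{\sigma_c}=\widehat{\sigma_0}+c$, and similarly for $\mu$ and $\alpha$, the existence of such a $u$ of a given type is equivalent to $m$ lying in the difference set $\widehat{u_0}-\widehat{u_0}$ of the base vertex $u_0\in\{\sigma_0,\mu_0,\alpha_{1,0},\dots,\alpha_{d,0}\}$. Hence the lemma reduces to showing that these difference sets cover $\mathbb Z_n$.

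Next I would read off the three kinds of difference sets from the definition of $V(T_i)$. One finds $\widehat{\sigma_0}=\{0,-1,\dots,-(d+1)\}$, an interval of $d+2$ consecutive residues, so $\widehat{\sigma_0}-\widehat{\sigma_0}=\{m:|m|\le d+1\}$ (the case where $T_0$ and $T_m$ already share a $\sigma$-vertex). Likewise $\widehat{\mu_0}=\{-t(d+3):0\le t\le d+1\}$ is an arithmetic progression of length $d+2$ and common difference $d+3$, giving $\widehat{\mu_0}-\widehat{\mu_0}=\{s(d+3):|s|\le d+1\}$ (the ``multiple of $d+3$'' case). Finally $\widehat{\alpha_{j,0}}$ splits as a near-interval $\{0\}\cup\{-a:2\le a\le d+2-j\}$ together with a block of multiples $\{-k(d+3):d+2-j\le k\le d+1\}$. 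Differences internal to each block only reproduce the two previous cases; the genuinely new differences are the cross-terms $\pm(k(d+3)-a)$. Letting $d+2-j$ range over $\{2,\dots,d+1\}$, these cross-terms sweep out exactly $\pm D$, where $D=\bigcup_{k=2}^{d+1}[\,k(d+2),\,k(d+3)-2\,]$.

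The conclusion then follows from a counting-plus-disjointness argument resting on the identity $n=(d+2)(d+3)-1$, so that $n\equiv-1\pmod{d+3}$ and $\gcd(d+3,n)=1$. The four pieces $A=\{m:|m|\le d+1\}$, $B=\{s(d+3):1\le|s|\le d+1\}$, $D$ and $-D$ have sizes $2d+3$, $2d+2$, $d(d+1)/2$ and $d(d+1)/2$, summing to exactly $d^{\hspace{.3mm}2}+5d+5=n$. I would check pairwise disjointness: reduction modulo $d+3$ separates $A$, $B$ and $D$ (elements of $D$ are $\equiv-a$ with $2\le a\le d+1$, hence $\not\equiv 0,-1$, whereas $B$ is $\equiv0$ or $\equiv-1$), and a collision in $D\cap(-D)$ would force $(k+k'-1)(d+3)\equiv0\pmod n$, impossible for $2\le k,k'\le d+1$ since $\gcd(d+3,n)=1$. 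Being disjoint with total size $n$, the four sets partition $\mathbb Z_n$; thus every $m$ is a difference within some $\hat u$, giving $T_0\cap T_m\neq\emptyset$.

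The main obstacle is the $\alpha$-computation together with this final bookkeeping: one must pin down $\widehat{\alpha_{j,0}}$ precisely, recognize that only the cross-differences are new, and verify that $A$, $B$ and $\pm D$ neither overlap nor leave gaps. The exactness of the count --- the four sizes summing to $n$ on the nose --- is the structural reason the argument needs $n=d^{\hspace{.3mm}2}+5d+5$: it makes the associated difference sets a \emph{perfect} difference cover of $\mathbb Z_n$.
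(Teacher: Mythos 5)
Your overall strategy is sound and genuinely different from the paper's: the paper uses the same shift symmetry to reduce to $T_0\cap T_i\neq\emptyset$ and then runs a six-case analysis on the position of $i$, whereas you recast the problem as showing that the difference sets of $\widehat{\sigma}_0$, $\widehat{\mu}_0$ and the $\widehat{\alpha}_{j,0}$ form a perfect difference cover of $\mathbb{Z}_n$. Your identification of those sets is correct: the only new (cross) differences contributed by the $\widehat{\alpha}_{j,0}$ are $\pm D$ with $D=\bigcup_{k=2}^{d+1}\{k(d+3)-a:2\le a\le k\}$, and the cardinalities $(2d+3)+(2d+2)+2\cdot d(d+1)/2=n$ do sum exactly; the asserted partition of $\mathbb{Z}_n$ into $A$, $B$, $D$, $-D$ is in fact true, so the lemma would follow.

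The gap is in your verification of pairwise disjointness, which is the entire content of the covering claim. First, ``reduction modulo $d+3$'' is not defined on $\mathbb{Z}_n$, since $d+3\nmid n$ (indeed $n\equiv-1\pmod{d+3}$); one must fix integer representatives and track the fact that adding $n$ shifts the residue by $-1$. More seriously, even after doing so the residues do not separate $A$ from $D$: the canonical representatives of $A$ in $[0,n-1]$ are $[0,d+1]\cup[n-d-1,n-1]$ and realize \emph{every} residue in $\{0,1,\dots,d+1\}$ modulo $d+3$, which meets the residue set $\{2,\dots,d+1\}$ of $D$. The separation here must come from magnitudes, not residues: $D\cup(-D)\subseteq[d+4,\,n-d-4]$ while the representatives of $A$ avoid $[d+2,\,n-d-2]$ (a similar magnitude bound, $0<|s(d+3)-m|\le(d+1)(d+4)=n-1$, is what gives $A\cap B=\emptyset$). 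Finally, your $D\cap(-D)$ step does not follow as stated: a collision $k(d+3)-a\equiv-(k'(d+3)-a')$ gives $(k+k')(d+3)\equiv a+a'\pmod n$, not $(k+k'-1)(d+3)\equiv0$. Since $0<(k+k')(d+3)-(a+a')<2n$, this difference must equal $n=(d+2)(d+3)-1$, which forces $k+k'=d+3$ and $a+a'=d+4$; that contradicts $a\le k$ and $a'\le k'$, and it is this use of the constraints $a\le k$, $a'\le k'$ --- not $\gcd(d+3,n)=1$ --- that kills the collision. With these repairs the argument closes, but as written the disjointness, and hence the covering, is not established.
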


\begin{proof} Let $\varphi$ be a bijection on $V(G^{\hspace{.1mm}d})$ given by \vspace{-4mm}
\begin{equation}
\varphi = (\sigma_0,\ldots,\sigma_{n-1})(\mu_0,\ldots,\mu_{n-1})
\prod_{j=1}^{d}(\alpha_{j,0},\ldots,\alpha_{j,n-1}). \nonumber
\end{equation}
It is easily seen that $\varphi$ is an automorphism of $G^{\hspace{.1mm}d}$ and further we have
$T_{i+1}=\varphi(T_i)$. Thus $T_i= \varphi^i(T_0)$. Thus to show that $T_i\cap T_j\neq \emptyset$ for $0\leq
i,j\leq n-1$, it is sufficient to show that $T_i\cap T_0\neq \emptyset$ for $0\leq i\leq n-1$.

\smallskip

\par\noindent{\bf Claim\,:} For $0\leq i\leq n-1$, if there exist
integers $l, k$ with $2\leq l\leq k\leq d+1$ which satisfy either (i) $i+k(d+3)=n+l$ or (ii) $i+l=k(d+3)$ then
$T_i$ intersects $T_0$.

\smallskip

Suppose $i+k(d+3)=n+l$ for some integers $l,k$ satisfying $2\leq l\leq k\leq d+1$. Thus $i+k(d+3)\equiv l$ (mod
$n$). Then from (\ref{eq:inducedsets}), we see that $\{\alpha_{j,l}:d+2-k\leq j\leq d\}\subseteq V(T_i)$. Also
from (\ref{eq:inducedsets}), $\{\alpha_{j,l}:1\leq j\leq d+2-l\}\subseteq V(T_0)$. For $l\leq k$, we see that the
intersection of the above two sets is $\{\alpha_{j,l} : d+2-k\leq j\leq d+2-l\} \subseteq V(P_l)$. Next suppose
that $i+l=k(d+3)$ for some integers $2\leq l\leq k\leq d+1$. Again from (\ref{eq:inducedsets}), we have
$\{\alpha_{j,i+l}: 1\leq j\leq d+2-l\} \subseteq V(T_i)$ and $\{\alpha_{j,i+l}: d+2-k\leq j\leq d\}\subseteq
V(T_0)$. Thus for $l\leq k$, the two sets intersect, and hence $T_0$ and $T_i$ intersect. This proves the claim.

\smallskip

Clearly, we have the following six cases. \begin{enumerate} \item[(a)] $0\leq i \leq d+1$: In this case, $T_i$
intersects $T_0$ in $\sigma_i$.

\item[(b)] $i> (d+1)(d+3)$: It is easy to see that $T_i$ contains $\sigma_0$ and hence $T_i \cap T_0 \neq
\emptyset$.

\item[(c)] $i= k(d+3)$, $1\leq k\leq d+1$: In this case, $T_i$ intersects $T_0$ in $\mu_i$.

\item[(d)] $i=k(d+3)-1$, $1\leq k\leq d+1$: Then $i+l(d+3)\equiv n\equiv 0$ (mod $n$), where $l=d+2-k \leq d+1$.
This implies $T_i$ contains $\mu_0\in V(T_0)$. So, $T_i \cap T_0 \neq \emptyset$.

\item[(e)] $j(d+3)<i<(j+1)(d+2)$, $1\leq j \leq d$: Let $i=j(d+3)+t$, where $1\leq t<d+2-j$. Let $k=d+2-j$. Then
$i + k(d +3)=n+l$ where $l=t+1$. Since $1\leq j\leq d$, we have $2 \leq l \leq k\leq d+1$. Hence, by the claim,
$T_i$ intersects $T_0$.

\item[(f)] $k(d+2)\leq i<k(d+3)-1$, $2 \leq k\leq d+1$: Let $i=k(d+2)+t$ where $0\leq t<k-1$. Let $l=k-t$. Then
$i+l= k(d+3)$ and $2\leq l\leq k\leq d+1$. Therefore, by the claim, $T_i$ intersects $T_0$.

\end{enumerate}
This completes the proof of the lemma.
\end{proof}

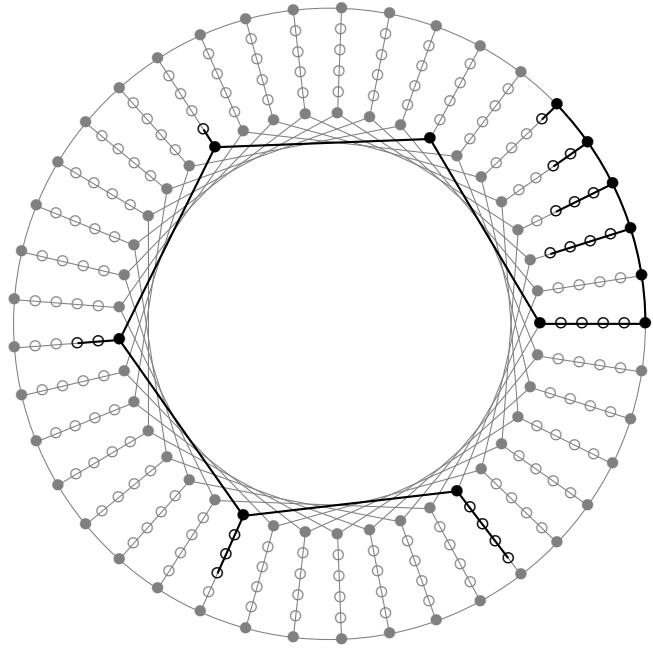
\begin{figure}[tp]
\centering
\begin{tikzpicture}[gray,thin,scale=1.4]
\draw (0,0) circle (3.0cm); \foreach \i in {0,...,40} { \draw ({(360/41*\i)}:3.0cm) node {$\bullet$}; \draw
({(360/41*\i)}:2.0cm) node {$\bullet$}; \draw ({(360/41*\i)}:2.0cm) -- ({(360/41*\i)}:3.0cm); \draw
({(360*7/41*\i)}:2.0cm) -- ({(360*7/41 *(\i+1))}:2.0cm); \draw ({(360/41*\i)}:2.2cm) node {$\circ$}; \draw
({(360/41*\i)}:2.4cm) node {$\circ$}; \draw ({(360/41*\i)}:2.6cm) node {$\circ$}; \draw ({(360/41*\i)}:2.8cm)
node {$\circ$}; }

    \foreach \i in {0} {
      \draw ({(360/41*\i)}:3.0cm) node {\color{black}{$\bullet$}}
            ({(360/41*\i)}:2.8cm) node {\color{black}{$\circ$}}
             ({(360/41*\i)}:2.6cm) node {\color{black}{$\circ$}}
             ({(360/41*\i)}:2.4cm) node {\color{black}{$\circ$}}
             ({(360/41*\i)}:2.2cm) node {\color{black}{$\circ$}}
            ({(360/41*\i)}:2.0cm) node {\color{black}{$\bullet$}}
            ({(360/41*(\i+1))}:3.0cm) node {\color{black}{$\bullet$}}
            ({(360/41*(\i+2))}:3.0cm) node {\color{black}{$\bullet$}}
            ({(360/41*(\i+2))}:2.8cm) node {\color{black}{$\circ$}}
            ({(360/41*(\i+2))}:2.6cm) node {\color{black}{$\circ$}}
            ({(360/41*(\i+2))}:2.4cm) node {\color{black}{$\circ$}}
            ({(360/41*(\i+2))}:2.2cm) node {\color{black}{$\circ$}}
            ({(360/41*(\i+3))}:3.0cm) node {\color{black}{$\bullet$}}
            ({(360/41*(\i+3))}:2.8cm) node {\color{black}{$\circ$}}
            ({(360/41*(\i+3))}:2.6cm) node {\color{black}{$\circ$}}
            ({(360/41*(\i+3))}:2.4cm) node {\color{black}{$\circ$}}
            ({(360/41*(\i+4))}:3.0cm) node {\color{black}{$\bullet$}}
            ({(360/41*(\i+4))}:2.8cm) node {\color{black}{$\circ$}}
            ({(360/41*(\i+4))}:2.6cm) node {\color{black}{$\circ$}}
            ({(360/41*(\i+5))}:3.0cm) node {\color{black}{$\bullet$}}
            ({(360/41*(\i+5))}:2.8cm) node {\color{black}{$\circ$}};

    \draw ({(360/41*(\i+7))}:2.0cm) node {\color{black}{$\bullet$}}
          ({(360/41*(\i+14))}:2.0cm) node {\color{black}{$\bullet$}}
          ({(360/41*(\i+14))}:2.2cm) node {\color{black}{$\circ$}}
          ({(360/41*(\i+21))}:2.0cm) node {\color{black}{$\bullet$}}
          ({(360/41*(\i+21))}:2.2cm) node {\color{black}{$\circ$}}
          ({(360/41*(\i+21))}:2.4cm) node {\color{black}{$\circ$}}
          ({(360/41*(\i+28))}:2.0cm) node {\color{black}{$\bullet$}}
          ({(360/41*(\i+28))}:2.2cm) node {\color{black}{$\circ$}}
          ({(360/41*(\i+28))}:2.4cm) node {\color{black}{$\circ$}}
          ({(360/41*(\i+28))}:2.6cm) node {\color{black}{$\circ$}}
          ({(360/41*(\i+35))}:2.0cm) node {\color{black}{$\bullet$}}
          ({(360/41*(\i+35))}:2.2cm) node {\color{black}{$\circ$}}
          ({(360/41*(\i+35))}:2.4cm) node {\color{black}{$\circ$}}
          ({(360/41*(\i+35))}:2.6cm) node {\color{black}{$\circ$}}
          ({(360/41*(\i+35))}:2.8cm) node {\color{black}{$\circ$}};

    }

    \foreach \i in {0} {
        \draw [black,thick] ({(360/41*\i)}:3.0cm) arc ({360/41*\i}:{360/41*(\i+5)}:3.0cm)
        ({(360/41*\i)}:3.0cm) -- ({(360/41*\i)}:2.0cm)
        -- ({360/41*(\i+7)}:2.0cm)
        -- ({360/41*(\i+14)}:2.0cm)
        -- ({360/41*(\i+21)}:2.0cm)
        -- ({360/41*(\i+28)}:2.0cm)
        -- ({360/41*(\i+35)}:2.0cm)
        ({360/41*(\i+2)}:3.0cm) -- ({360/41*(\i+2)}:2.2cm)
        ({360/41*(\i+3)}:3.0cm) -- ({360/41*(\i+3)}:2.4cm)
        ({360/41*(\i+4)}:3.0cm) -- ({360/41*(\i+4)}:2.6cm)
        ({360/41*(\i+5)}:3.0cm) -- ({360/41*(\i+5)}:2.8cm)
        ({360/41*(\i+14)}:2.0cm) -- ({360/41*(\i+14)}:2.2cm)
        ({360/41*(\i+21)}:2.0cm) -- ({360/41*(\i+21)}:2.4cm)
        ({360/41*(\i+28)}:2.0cm) -- ({360/41*(\i+28)}:2.6cm)
        ({360/41*(\i+35)}:2.0cm) -- ({360/41*(\i+35)}:2.8cm);
    }
\end{tikzpicture}
\caption{Graph $G^4$ and the tree $T_0$ ($\in {\mathcal T}_1$) in black} \label{fig:fig2}
\end{figure}

\begin{lemma}\label{lemma:pure}
Let $d\geq 2$ and $n = d^{\hspace{.3mm}2}+5d+5$. Let the graph $G^{\hspace{.1mm}d}$ and
the family of induced subtrees ${\mathcal
T}_1=\{T_i\}_{i=0}^{n-1}$ be as in Example $\ref{example:Gd}$. Then
\begin{enumerate}[{\rm (a)}]
\item $T_i$ is a tree on $n-d-1$ vertices.
\item For all $v\in V(G^{\hspace{.1mm}d})$, $\hat{v}$ is a $(d+
2)$-element set.
\item For $u,v\in V(G^{\hspace{.1mm}d})$, $\hat{u}\cap \hat{v}$ is a $(d+1)$-element set if and only if $uv$ is
an edge in $G^{\hspace{.1mm}d}$.

\end{enumerate}
\end{lemma}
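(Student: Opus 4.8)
The plan is to verify the three assertions by reducing everything to the single tree $T_0$ and its associated index sets, exploiting the cyclic automorphism $\varphi$ from the proof of Lemma~\ref{lemma:neighborly}. Since $T_i=\varphi^{\hspace{.1mm}i}(T_0)$ and $\varphi$ acts as $\sigma_m\mapsto\sigma_{m+1}$, $\mu_m\mapsto\mu_{m+1}$, $\alpha_{j,m}\mapsto\alpha_{j,m+1}$, we have $v\in V(T_i)\iff\varphi(v)\in V(T_{i+1})$, hence $\widehat{\varphi(v)}=\hat v+1$. Consequently all $T_i$ are isomorphic (so (a) can be read off from $T_0$ alone), $|\hat v|$ is constant on each $\varphi$-orbit (so (b) needs only the representatives $\sigma_0,\mu_0,\alpha_{1,0},\dots,\alpha_{d,0}$), and each intersection size $|\hat u\cap\hat v|$ depends only on the types of $u,v$ and on the difference $t$ of their second indices (so (c) becomes finitely many one-parameter computations). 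The arithmetic engine throughout is the identity $(d+2)(d+3)=d^{\hspace{.3mm}2}+5d+6=n+1\equiv 1\pmod n$: thus $d+3$ is a unit modulo $n$ with inverse $d+2$, which is exactly what makes $C_2$ a genuine $n$-cycle and underlies the index-distinctness claims below. (One may note that (a)--(c), together with Lemma~\ref{lemma:neighborly}, are precisely conditions (i)--(iii) of Lemma~\ref{lemma:construction} with $d$ replaced by $d+1$.)

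For part (a) I would first count $V(T_0)$ by checking that the five sets in (\ref{eq:inducedsets}) are pairwise disjoint; the $\sigma$-, $\mu$- and $\alpha$-vertices are separated by type, so the only issue is among the three $\alpha$-contributions, whose second indices are mutually incongruent modulo $n$ because the shifts $k$ and $k(d+3)$ (for $2\le k\le d+1$) all lie strictly between $0$ and $n$. The disjoint union then has $(d+2)+(d+2)+d+\tfrac{d(d+1)}2+\tfrac{d(d+1)}2=(d+2)^2=n-d-1$ vertices. To see $T_0$ is a tree I would exhibit it as a caterpillar: its spine is the arc $\sigma_{d+1}\cdots\sigma_0$ of $C_1$, followed by the full path $P_0$ down to $\mu_0$, followed by the arc $\mu_0\mu_{d+3}\cdots\mu_{(d+1)(d+3)}$ of $C_2$, while the remaining $\alpha$-vertices form disjoint partial paths each attached to the spine at a single vertex. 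The checks are that the $\sigma$-arc and $\mu$-arc are proper (lengths $d+2<n$, so neither closes up), that $P_0$ is the only complete connecting path present, and that no top tooth reaches a $\mu$-vertex while no bottom tooth reaches the $\sigma$-arc; together these show the induced subgraph is connected and acyclic.

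For part (b) the sets are explicit: $\hat\sigma_0=\{0,-1,\dots,-(d+1)\}$ and $\hat\mu_0=\{-k(d+3):0\le k\le d+1\}$ each have $d+2$ elements (distinctness of the $\mu$-set again using that $d+3$ is a unit), while $\alpha_{j,0}\in V(T_i)$ exactly for $i=0$, for $i=-k$ with $2\le k\le d+2-j$, and for $i=-k(d+3)$ with $d+2-j\le k\le d+1$. These three families are disjoint by the same incongruence argument, and they contribute $1+(d+1-j)+j=d+2$ indices, giving $|\hat\alpha_{j,0}|=d+2$.

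Part (c) is where the real work lies. The ``if'' direction is a short computation once the base sets are recorded: for each of the five edge types of $G^d$ one intersects the relevant two sets and reads off $d+1$ (two consecutive length-$(d+2)$ arcs of $C_1$ or $C_2$ overlap in $d+1$ points; the path edges $\sigma\alpha_1$, $\alpha_j\alpha_{j+1}$, $\alpha_d\mu$ give $d+1$ on combining the ``$\sigma$-branch'' and ``$\mu$-branch'' counts, e.g.\ $1+(d-j)+j$). The harder ``only if'' direction asks for $|\hat u\cap\hat v|\le d$ for every non-adjacent pair. I would first observe that the $\hat v$ are pairwise distinct, so $|\hat u\cap\hat v|\le d+1$ always, reducing the task to excluding equality off the edges. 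The two recurring tools are: (i) any arithmetic progression with common difference a multiple of $d+3$ meets a block of $d+2$ consecutive residues in at most one point, since every gap is $d+2$ or $d+3$; and (ii) multiplication by the unit $d+2$ turns each $\mu$-index set into an interval, so that pairs among $\{\sigma,\mu\}$ reduce to the statement that two length-$(d+2)$ intervals overlap in $d+1$ iff their centres are adjacent. The genuinely laborious cases are those involving an $\alpha$-vertex, namely $(\sigma,\alpha)$, $(\mu,\alpha)$ and especially $(\alpha,\alpha)$, where one must track the interaction of the $\sigma$-branch (an interval with a hole) and the $\mu$-branch (a short progression) of each base set $\hat\alpha_{j,0}$ as $t$ varies; I expect this to be the main obstacle, and I would organise it by the value of $t$, using tool (i) to cap the $\mu$-branch contributions at $1$ and estimating the $\sigma$-branch overlap directly. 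Finally, the latent $\sigma\leftrightarrow\mu$ symmetry of $G^d$ (implemented by multiplying indices by $d+2$ and reversing the $\alpha$-indices) lets one deduce the $(\mu,\alpha)$ cases from the $(\sigma,\alpha)$ cases, shortening the analysis.
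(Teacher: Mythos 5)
Your overall strategy coincides with the paper's: write down the index sets $\hat{\sigma}_m$, $\hat{\mu}_m$, $\hat{\alpha}_{l,m}$ explicitly, count them using the fact that $(d+2)(d+3)\equiv 1\pmod n$, and for (c) verify the "if" direction directly and rule out $(d+1)$-element intersections for non-adjacent pairs via the two arithmetic facts you call tools (i) and (ii). Parts (a) and (b) are fine as you describe them (indeed you do a little more than the paper, which never explicitly verifies that the $T_i$ are trees), and your use of the unit $d+2=(d+3)^{-1}$ to transport $\mu$-statements to $\sigma$-statements is exactly the paper's map $\pi:i\mapsto (d+2)i$.

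The gap is in the only-if direction of (c) for a pair $\alpha_{l,m},\alpha_{r,s}$ with $m\neq s$, which you correctly identify as the main obstacle but whose resolution you only sketch — and the sketch as stated does not close. Writing $\hat{\alpha}_{l,m}=A\sqcup B$ with $A$ the "interval with a hole" through $m$ and $B$ the $(d+3)$-progression, your plan bounds the two cross-terms $A\cap B'$ and $B\cap A'$ by $1$ each and "estimates the $\sigma$-branch overlap directly". In the regime $0<|m-s|_n\leq d+1$ this yields only $|\hat{\alpha}_{l,m}\cap\hat{\alpha}_{r,s}|\leq |A\cap A'|+1+1+|B\cap B'|$, and since $|A\cap A'|$ can genuinely be as large as $d-1$ (e.g.\ $l=r=1$, $s=m+1$), the bound tops out at $d+1$ rather than $d$; so the naive estimate does not exclude the forbidden value, and a finer argument is required. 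The paper's device is to \emph{regroup} in that regime: it uses the second decomposition $\hat{\alpha}_{l,m}=C\sqcup D$ with $C=\{m\}\cup B$ (a progression whose consecutive gaps are all $\geq d+2$) and $D$ the interval part without $m$, shows $C_{l,m}\cap C_{r,s}=\emptyset$ when $0<|m-s|_n\leq d+1$ and $\#(C\cap D')\leq 1$, and is thereby forced into $D_{l,m}=D_{r,s}$ of size $d$, whence $l=r=1$ and then $m\equiv s$ by the multiplication-by-$(d+2)$ trick — a contradiction. (The complementary regime $|m-s|_n>d+1$ works with $A\sqcup B$ as you propose.) Some such extra idea, or an honest exhaustive check over the small offsets $t$, is needed before your case analysis is complete. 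Separately, for the within-path pairs $(\sigma_m,\alpha_{l,m})$, $(\alpha_{l,m},\alpha_{r,m})$, $(\mu_m,\alpha_{l,m})$ the paper avoids case-by-case estimation entirely by introducing the metric $\Delta(u,v)=\#(\hat{u}\setminus\hat{v})$ and applying the triangle inequality along $P_m$ against $\Delta(\sigma_m,\mu_m)=d+1$; you may find this a substantial shortcut over organising those cases by hand.
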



\begin{proof}
From (\ref{eq:inducedsets}) we have \vspace{-4mm}
$$
\#V(T_i)=(d+2)+(d+2)+d+\sum_{k=2}^{d+1}(d+2-k) + \sum_{k=2}^{d+1} (k-1)
 = d^2+4d+4=n-d-1.
$$
This proves (a). Recall that for $v\in V(G^d)$, $\hat{v} = \{i \, : \, v\in T_i\}$. Then from
(\ref{eq:inducedsets}), we see that
\begin{align} \label{eq:facets1}
\hat{\alpha}_{l,m}  = & \{m\}\cup \{m-k : 2\leq k\leq d+2-l\}\cup \{m-(d+3)j: d+2-l\leq j\leq
d+1\}, \nonumber \\
\hat{\sigma}_m  = & \{m-k: 0\leq k\leq d+1\}, \, \, \hat{\mu}_m =\{m-k(d+3): 0\leq k\leq d+1\},
\end{align}
for $1\leq l\leq d$ and $0\leq m\leq n-1$. (Here $\hat{\sigma}_m, \hat{\mu}_m, \hat{\alpha}_{l,m} \subseteq
\mathbb{Z}_n$.) Clearly $\hat{\sigma}_l,\hat{\mu}_l$ are sets of size $d+2$. Further, we have: for $2\leq k,
j\leq d+1$, $k\not\equiv (d+3)j$ (mod $n$), and hence $\#(\hat{\alpha}_{l, m})= 1+ (d+1-l) +l =d+2$. This proves
(b).

Let us define a metric $\Delta$ on the set $V(G^{\hspace{.1mm}d})$ as $\Delta(u,v) := \#(\hat{u}\backslash
\hat{v})= \#(\hat{v} \backslash \hat{u})$. It is easy to see that $\Delta$ indeed defines a metric on
$V(G^{\hspace{.1mm}d})$, the proof of which will be omitted here. Clearly, $\#(\hat{\sigma}_i \cap \hat{\mu}_j) <
d+1$ and $\sigma_i\mu_j$ is not an edge of $G^{\hspace{.1mm}d}$ for $0\leq i, j\leq n-1$. Thus, to prove (c), we
need to show the following:

\begin{enumerate}[{\rm (i)}]
\item $\Delta(\sigma_i,\sigma_j)=1 \Leftrightarrow i-j\equiv \pm 1$ (mod $n$). \item $\Delta(\mu_i,\mu_j)=1
\Leftrightarrow i-j\equiv\pm (d+3)$ (mod $n$). \item $\Delta(\alpha_{l,m},\alpha_{r,s})=1$ $\Leftrightarrow$
$m=s$, $l-r=\pm 1$. \item $\Delta(\sigma_i,\alpha_{l,m})=1 \Leftrightarrow i=m, l=1$. \item
$\Delta(\mu_i,\alpha_{l,m})=1 \Leftrightarrow
 i=m, l=d$.
\end{enumerate}

In all the above cases, the reverse implications follow from the definitions of the sets in (\ref{eq:facets1}).
Before we proceed with the proofs of the forward implications, we introduce some notation. For integers $i,j$,
let $|i-j|_n$ denote the smallest non-negative integer $k$ such that either $i+k\equiv j$ (mod $n$), or
$j+k\equiv i$ (mod $n$). If we think of $\mathbb{Z}_n$ as the vertex set of the $n$-cycle $C_n$ whose edges are
$\{i, i+1\}$ then $|i-j|_n$ is the distance between vertices $i$ and $j$ in $C_n$. Thus, $| \cdot |_n$ is a
metric on $\mathbb{Z}_n$ and $|i-j|_n\leq n/2$ for all $i, j$. For integers $i\leq j$, let $[i,j]_n :=\{z\in
\mathbb{Z} : z\equiv k$ (mod $n$), for some $k \in \{i, i+1, \dots, j\}\}$.

\smallskip

\par\noindent {\bf Claim 1:} $\Delta(\sigma_i,\sigma_j)\geq
\min\{|i-j|_n, \, d+2\}$.

\smallskip

If $|i-j|_n = 0$ then there is nothing to prove. So, assume that $t :=|i-j|_n>0$.

Assume, without loss, that $j\equiv i+t$ (mod $n$). Let $T=\{j-k: 0\leq k\leq t-1\}$. We claim that $T\cap
\hat{\sigma}_i = \emptyset$. Assume that $T\cap \hat{\sigma}_i \neq \emptyset$. Then there exist integers
$k,k'$, where $0\leq k\leq t-1$ and $0\leq k'\leq d+1$, such that $j-k\equiv i- k'$ (mod $n$). So, $j\equiv
i+(k-k')$ (mod $n$). Since $k-k'\leq t-1$, this implies (by the definition of $|i-j|_n$) that $k-k'<0$. Thus,
$t-(k-k') >0$ and $t-(k-k') \equiv 0$ (mod $n$) (since $t = j-i \equiv k-k'$ (mod $n$)). So, $t-(k-k') = pn$ for
some positive integer $p$. Then $n \leq pn \leq t+k' \leq n/2+(d+1)$. This implies $n \leq 2d+2$, a
contradiction. Thus, $T\cap \hat{\sigma}_i= \emptyset$.

Now, if $t \leq d+1$, then $T \subseteq \{j-k : 0\leq k\leq d+1\} = \hat{\sigma}_j$ and hence $\Delta(\sigma_i,
\sigma_j) \geq \#(T)=t$. On the other hand, if $t\geq d+2$, then $T \supseteq \{j-k : 0\leq k\leq d+1\} =
\hat{\sigma}_j$, and hence $\hat{\sigma}_i\cap \hat{\sigma}_j=\emptyset$. Therefore $\Delta(\sigma_i, \sigma_j) =
d+2$.  This proves Claim 1.

\smallskip

For $1\leq i\leq d$, $0\leq j\leq n-1$, let $A_{i,j} :=\{j\}\cup \{j-k: 2\leq k\leq d+2-i\}$, $B_{i,j}
:=\{j-k(d+3): d+2-i \leq k\leq d+1\}$, $C_{i,j} :=\{j\}\cup \{j-k(d+3): d+2-i\leq k\leq d+1\}$ and $D_{i,j}
:=\{j-k: 2\leq k\leq d+2-i\}$. So, $\hat{\alpha}_{i, j} =A_{i,j}\sqcup B_{i,j} = C_{i,j}\sqcup D_{i,j}$.

\smallskip

\par\noindent {\bf Claim 2:} (a) If $|m-s|_n>d+1$ then $A_{l,m}\cap A_{r,s}=\emptyset$ and $\#(A_{l,m}\cap
B_{r,s})\leq 1$ for $1\leq l, r\leq d$. (b) If $0< |m-s|_n\leq d+1$ then $C_{l,m}\cap C_{r,s}=\emptyset$ and
$\#(C_{l,m}\cap D_{r,s})\leq 1$  for $1\leq l, r\leq d$.

\smallskip

Suppose $|m-s|_n>d+1$. Assume that $z\in A_{l,m}\cap A_{r,s}$. Then there exist integers $k, k'$ with $0\leq
k\leq d+2-l\leq d+1$ and $0\leq k'\leq d+2-r\leq d+1$ such that $m-k \equiv z \equiv s-k'$ (mod $n$). Then
$|m-s|_n\leq d+1$, a contradiction. Thus $A_{l,m}\cap A_{r,s}=\emptyset$.

Assume that $z, x\in A_{l, m} \cap B_{r, s}$, where $z \neq x$. Since $z, x\in A_{l,m}$, there exist $a, b
\in \{0, \dots, d+1\}$ such that $z=m-a$ and $x=m-b$. Then $z-x = b-a\in [-(d+1), (d+1)]_n$. Since $z, x\in
B_{r,s}$, there exist $k, k'\in \{2, \dots, d+1\}$ such that $z\equiv s-k(d+3)$ (mod $n$) and $x\equiv s-k'(d+3)$
(mod $n$). So, $z-x\equiv (k'-k)(d+3)$ (mod $n)$. Assume without loss that $k'>k$. Then $1\leq k'-k\leq d-1$ and
hence $d+1 < (k'-k)(d+3) < n-(d+1)$. This implies that $z-x=(k'-k)(d+3)\not\in [-(d+1),(d+1)]_n$, a
contradiction. Therefore, $\#(A_{l,m}\cap B_{r,s})\leq 1$. This proves part (a). By similar arguments, part (b)
of Claim 2 follows.

\smallskip

\par\noindent {\bf Claim 3:} If $\Delta(\alpha_{l,m},\alpha_{r,s}) =1$ then $m= s$.

\smallskip

Assume that $\Delta(\alpha_{l,m},\alpha_{r,s}) =1$. Then $\#(\hat{\alpha}_{l,m}\cap \hat{\alpha}_{r,s})=d+1$.
Assume that $m \neq s$. Then $|m-s|_n> 0$. We have the following two cases.

\smallskip

\noindent {\bf Case 1.} $|m-s|_n>d+1$. Then, by Claim 2 (a), we have $A_{l,m} \cap A_{r,s} = \emptyset$,
$\#(A_{l,m} \cap \hat{\alpha}_{r, s})\leq 1$ and $\#(A_{r,s} \cap \hat{\alpha}_{l, m}) \leq 1$. Also,
$\#(B_{l,m})= l \leq d$, $\#(B_{r,s}) = r \leq d$ and hence $\#(B_{l,m} \cap B_{r,s}) \leq d$. Since
$\#(\hat{\alpha}_{l,m}\cap \hat{\alpha}_{r,s})=d+1$, these imply  $\#(B_{l,m}\cap B_{r,s})=d$. This implies
$B_{l,m}=B_{r,s}$ and $\#(B_{l,m})= d = \#(B_{r,s})$. Therefore $l=d=r$. In particular, $B_{d, m} = B_{d, s}$.
Then there exist integers $2\leq k, k'\leq d+1$ such that $m-2(d+3) \equiv s-k(d+3)$ (mod $n$), and $m-k'(d+3)
\equiv s-2(d+3)$ (mod $n$). Subtracting we get $(k'-2)(d+3)\equiv (2-k)(d+3)$ (mod $n$). Multiplying by $d+2$, we
get $k'-2\equiv 2-k$ (mod $n$) and hence $k+k'\equiv 4$ (mod $n$). Since $4 \leq k+k' \leq 2d+2 < n$, it follows
that $k=k'=2$. Thus $m-2(d+3) \equiv s-2(d+3)$ (mod $n$) and hence $m\equiv s$ (mod $n$). This is not possible
since $0\leq m, s\leq n-1$ and $m \neq s$.

\smallskip
\noindent {\bf Case 2.} $0< |m-s|_n \leq d+1$. Then, by Claim 2 (b), we have $C_{l,m}\cap C_{r,s} = \emptyset$,
$\#(C_{l,m}\cap \hat{\alpha}_{r,s}) \leq 1$ and $\#(C_{r,s} \cap \hat{\alpha}_{l,m})\leq 1$. Since
$\#(\hat{\alpha}_{l,m}\cap \hat{\alpha}_{r,s})=d+1$, we must have $\#(D_{l,m} \cap D_{r,s})=d$. This implies (as
in Case 1) $D_{l,m}=D_{r,s}$ and $\#(D_{l,m}) = d = \#(D_{r,s})$. Then, from the definition of $D_{l,m}$ (resp.,
$D_{r,s}$), $l=r=1$. So, $D_{1,m} = D_{1,s}$. As in Case 1, we get $m\equiv s$ (mod $n$). Again this is not
possible.

Thus, we get contradictions in both cases. Therefore, $m=s$. This proves Claim 3.

\smallskip

If $\Delta(\sigma_i,\sigma_j)=1$ then, by Claim 1, $|i-j|_n \leq 1$ and hence $i-j\equiv\pm 1$ (mod $n$). This
proves (i).

Since $(d+2)(d+3)\equiv 1$ (mod $n$), we see that the map $\pi:\mathbb{Z}_n\rightarrow \mathbb{Z}_n$ given by
$i\mapsto (d+2)i$ is a bijection, with the inverse map $\pi^{-1}$ given by $i\mapsto (d+3)i$. From the
definitions of $\hat{\sigma}_i$ and $\hat{\mu}_i$, we see that $\pi(\hat{\mu}_i)=\hat{\sigma}_{i(d +2)}$. Thus
$\Delta(\mu_i,\mu_j)=1 \Leftrightarrow \Delta(\sigma_{i(d +2)},\sigma_{j(d+2)})=1\Leftrightarrow |i(d+2)- j(d
+2)|_n=1$. Thus $i(d+2)-j(d+2)\equiv\pm 1$ (mod $n$), where multiplying by $d+3$ gives $i-j\equiv\pm (d+3)$ (mod
$n$). This proves (ii).

Now, assume $\Delta(\alpha_{l, m}, \alpha_{r,s})=1$. By Claim 3, $m=s$. So, $\Delta(\alpha_{l, m},\alpha_{r,m})
=1$. From the reverse implication, we have $\Delta(u, v) =1$ whenever $uv$ is an edge in $G^{\hspace{.1mm}d}$.
Notice that $\hat{\sigma}_m\cap \hat{\mu}_m=\{m\}$. Thus $\Delta(\sigma_m,\mu_m) =d+1$. Assume that $l\neq r\pm
1$. Then we can assume, without loss, that $r>l$ and $r-l\geq 2$. Then by the triangle inequality we have $d+1=
\Delta(\sigma_m, \mu_m) \leq \Delta(\sigma_m, \alpha_{l,m})+ \Delta(\alpha_{l,m}, \alpha_{r,m}) +
\Delta(\alpha_{r, m}, \mu_m) \leq l+1+(d+1-r)<d+1$, a contradiction. Therefore,  $l=r\pm 1$. This completes the
proof of (iii).

By again using the triangle inequality for $\Delta$ along the path
$P_m=\sigma_m\alpha_{1,m}\cdots\alpha_{d,m}\mu_m$, we can prove (iv)
and (v). This completes the proof of the lemma.
\end{proof}

\begin{lemma}\label{lemma:Md}
For $d\geq 2$ and $n = d^{\hspace{.1mm}2}+5d+5$, the simplicial complex ${\mathcal M}^{\hspace{.1mm}d+1}_{n}$
defined in Example $\ref{example:Md}$ is a neighborly member of $\overline{\mathcal K}(d+1)$.
\end{lemma}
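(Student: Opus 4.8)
The plan is to recognize $\mathcal{M}^{\hspace{.1mm}d+1}_{n}$ as precisely the complex produced by the general construction of Lemma~\ref{lemma:construction}, applied to the pair $(G^{\hspace{.1mm}d},\mathcal{T}_1)$ of Example~\ref{example:Gd} with the dimension parameter of that lemma taken to be $d+1$. Concretely, Lemma~\ref{lemma:construction} (read with $d$ replaced by $d+1$) asks for a family of $(n-(d+1))$-vertex induced subtrees satisfying three conditions, and then guarantees that the complex with facets $\{\hat v : v \in V(G^{\hspace{.1mm}d})\}$ is an $n$-vertex neighborly member of $\overline{\mathcal K}(d+1)$ whose dual graph is isomorphic to $G^{\hspace{.1mm}d}$. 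So first I would check the three hypotheses, all of which have in fact already been established.

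For the subtree size, Lemma~\ref{lemma:pure}(a) gives $\#V(T_i)=n-d-1=n-(d+1)$, as required. Hypothesis (i), that any two $T_i$ intersect, is exactly Lemma~\ref{lemma:neighborly}. Hypothesis (ii), with $d$ replaced by $d+1$, asks that each vertex lie in exactly $d+2$ of the trees; this is Lemma~\ref{lemma:pure}(b), which says every $\hat v$ has $d+2$ elements. Hypothesis (iii), again with $d\mapsto d+1$, asks that two distinct vertices $u,v$ lie together in exactly $d+1$ trees if and only if $uv\in E(G^{\hspace{.1mm}d})$, i.e.\ that $\hat u\cap\hat v$ is a $(d+1)$-element set precisely when $uv$ is an edge; this is Lemma~\ref{lemma:pure}(c). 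Thus Lemma~\ref{lemma:construction} applies verbatim and yields a neighborly member of $\overline{\mathcal K}(d+1)$ on the index set $\{0,\dots,n-1\}$ with facet list $\{\hat\sigma_m,\hat\mu_m,\hat\alpha_{l,m}\}$.

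It then remains to identify this complex with $\mathcal{M}^{\hspace{.1mm}d+1}_{n}$, i.e.\ to check that the sets $\hat\sigma_m,\hat\mu_m,\hat\alpha_{l,m}$ recorded in \eqref{eq:facets1} coincide, under $i\leftrightarrow a_i$ in $\mathbb Z_n$, with the facets $\sigma_m,\mu_m,\alpha_{l,m}$ of Example~\ref{example:Md}. The $\sigma$-facets agree on the nose. For the $\mu$- and $\alpha$-facets the match is not literal but follows from the single arithmetic fact $(d+2)(d+3)=n+1\equiv 1\pmod n$: this makes $d+2$ and $d+3$ mutually inverse mod $n$, so the ``descending'' arm $\{m-(d+3)j\}$ appearing in $\hat\mu_m$ and $\hat\alpha_{l,m}$ reindexes, reversing the order of $j$, into the ``ascending'' arm $\{m+(d+3)j'-1\}$ appearing in $\mu_m$ and $\alpha_{l,m}$. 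I expect this modular reindexing---verifying $-(d+3)j\equiv (d+3)(d+2-j)-1\pmod n$ and matching the index ranges---to be the only genuine computation, and hence the main (if minor) obstacle; everything substantive has already been absorbed into Lemmas~\ref{lemma:neighborly} and \ref{lemma:pure}. Once the facet lists are seen to coincide, $\mathcal{M}^{\hspace{.1mm}d+1}_{n}$ is the complex furnished by Lemma~\ref{lemma:construction}, and is therefore a neighborly member of $\overline{\mathcal K}(d+1)$.
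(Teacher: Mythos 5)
Your proposal is correct and follows the paper's own proof essentially verbatim: the paper likewise invokes Lemma~\ref{lemma:construction} for the pair $(G^{\hspace{.1mm}d},\mathcal{T}_1)$, cites Lemmata~\ref{lemma:neighborly} and \ref{lemma:pure} for the hypotheses, and then identifies the resulting complex with $\mathcal{M}^{\hspace{.1mm}d+1}_{n}$ via the set identity $\{i+j(d+3)-1 : 1\leq j\leq k\}=\{i-j(d+3) : d+2-k\leq j\leq d+1\}$ in $\mathbb{Z}_n$, which is exactly your reindexing using $(d+2)(d+3)\equiv 1\pmod n$.
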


\begin{proof}
Let $(G^{\hspace{.1mm}d}, {\mathcal T}_1)$ be as in Example \ref{example:Gd}. By Lemma \ref{lemma:pure},
$\hat{v}= \{i\, : \, v\in T_i\}$ is a set of $d+2$ elements for each $v\in V(G^{\hspace{.1mm}d})$. Consider the
$(d+1)$-dimensional simplicial complex $M(G^{\hspace{.1mm}d})$ consisting of facets $\hat{v}$, $v\in
V(G^{\hspace{.1mm}d})$. From Lemmata \ref{lemma:neighborly} and \ref{lemma:pure}, we see that
$(G^{\hspace{.1mm}d}, {\mathcal T}_1)$ satisfy the hypothesis of Lemma \ref{lemma:construction} and hence, by
Lemma \ref{lemma:construction}, $M(G^{\hspace{.1mm}d})$ is a neighborly member of $\overline{\mathcal K}(d+1)$.
Since $\{i+j(d+3)-1 \, : \, 1\leq j\leq k\} = \{i-j(d+3) \, : \, d+2-k \leq j \leq d+1\}$ as subsets of
$\mathbb{Z}_n$ for $0\leq i\leq n-1$ and $1\leq k\leq d+1$, by (\ref{eq:facets1}), $i \mapsto a_i$ defines an
isomorphism from $M(G^{\hspace{.1mm}d})$ to ${\mathcal M}^{\hspace{.1mm}d+ 1}_{n}$. This proves the lemma.
\end{proof}

\begin{lemma}\label{lemma:Nd}
For $d\geq 2$ and $n = d^{\hspace{.3mm}2}+5d+5$, the simplicial complex ${\mathcal N}^{\hspace{.1mm}d+1}_{n}$
defined in Example $\ref{example:Nd}$ is a neighborly member of $\overline{\mathcal K}(d+1)$.
\end{lemma}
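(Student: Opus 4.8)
The plan is to follow the proof of Lemma \ref{lemma:Md} almost verbatim, replacing the family $\mathcal{T}_1$ by the family $\mathcal{T}_2 = \{T_i\}_{i=1}^{n}$ of Example \ref{example:Hd}, which lives on the same graph $G^{d}$. Concretely, I would first establish the analogues of Lemmata \ref{lemma:neighborly} and \ref{lemma:pure} for the pair $(G^{d}, \mathcal{T}_2)$, then invoke Lemma \ref{lemma:construction} with its dimension parameter set to $d+1$ (so that each $T_i$ is an $(n-d-1)$-vertex induced subtree), concluding that the complex with facets $\hat v$, $v\in V(G^d)$, is a neighborly member of $\overline{\mathcal K}(d+1)$, and finally identify this complex with $\mathcal{N}^{d+1}_n$.

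First I would record the defining index sets. Reading off from (\ref{eq:inducedsets2}), for each vertex of $G^d$, the set of $i$ with that vertex in $V(T_i)$, one gets $\hat{\sigma}_m = \{m-k : 0\le k\le d+1\}$, $\hat{\mu}_m = \{m-k(d+3): 0\le k\le d+1\}$, and
\[
\hat{\alpha}_{l,m} = \{m\}\cup\{m-k : 2\le k\le d+2-l\}\cup\{m-k(d+3): 2\le k\le l+1\},
\]
for $1\le l\le d$, $0\le m\le n-1$. The only change from (\ref{eq:facets1}) is in the ``$\mu$-side'' of $\hat{\alpha}_{l,m}$: the block $\{m-(d+3)j: d+2-l\le j\le d+1\}$ of $\mathcal{T}_1$ is replaced by $\{m-k(d+3): 2\le k\le l+1\}$, which again contributes exactly $l$ elements, so $\#\hat{\alpha}_{l,m}=d+2$ as before (the analogue of Lemma \ref{lemma:pure}(b)). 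I would also recompute $\#V(T_i) = (d+2)^2 = n-d-1$, giving part (a).

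The edge characterization (analogue of Lemma \ref{lemma:pure}(c)) is the technical core, and I expect it to be the main obstacle, though it runs parallel to the existing proof. Using the same metric $\Delta(u,v) = \#(\hat u\setminus\hat v)$ and the integers $|i-j|_n$, the $\sigma$--$\sigma$ statement (Claim 1) is unchanged since $\hat{\sigma}_m$ is unchanged, and the $\mu$--$\mu$ statement again follows via the bijection $\pi\colon i\mapsto (d+2)i$ using $(d+2)(d+3)\equiv 1 \pmod{n}$. For the $\alpha$--$\alpha$ statements I would keep $A_{i,j}=\{j\}\cup\{j-k:2\le k\le d+2-i\}$ but use the modified blocks $B'_{i,j}=\{j-k(d+3):2\le k\le i+1\}$ and $C'_{i,j}=\{j\}\cup\{j-k(d+3):2\le k\le i+1\}$, and re-run Claims 2 and 3. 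The bound $\#(A_{l,m}\cap B'_{r,s})\le 1$ still holds because any two elements of $B'_{r,s}$ differ by a multiple $(k'-k)(d+3)$ with $1\le |k'-k|\le d-1$, whence $(k'-k)(d+3)$ lies in the range $[d+3,\,d^2+2d-3]$, which is contained in $(d+1,\,n-(d+1))$ and hence outside $[-(d+1),(d+1)]_n$, exactly as in Lemma \ref{lemma:pure}; the argument for $C'$ is symmetric. Deriving $m=s$ (Claim 3) then proceeds as before, and the $\sigma$--$\alpha$ and $\mu$--$\alpha$ edges come from the triangle inequality along $P_m$. The intersection property $T_i\cap T_0\neq\emptyset$ (analogue of Lemma \ref{lemma:neighborly}) is handled by the same cyclic automorphism $\varphi$ of $G^d$, which still satisfies $T_{i+1}=\varphi(T_i)$ for $\mathcal{T}_2$, reducing to the analogous six-case analysis adapted to the ranges in (\ref{eq:inducedsets2}).

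Once Lemma \ref{lemma:construction} applies, the final identification is in fact cleaner than for $\mathcal{M}^{d+1}_n$. Comparing the sets $\hat{\sigma}_m,\hat{\mu}_m,\hat{\alpha}_{l,m}$ above directly with the facets $\sigma_i,\mu_i,\alpha_{k,i}$ of (\ref{eq:facets-Nd}), the map $i\mapsto a_i$ already sends $\hat{\sigma}_m\mapsto\sigma_m$, $\hat{\mu}_m\mapsto\mu_m$ and $\hat{\alpha}_{l,m}\mapsto\alpha_{l,m}$ with no reindexing — the $\mu$-side $\{m-k(d+3):2\le k\le l+1\}$ matches $\{a_{i-j(d+3)}:2\le j\le k+1\}$ on the nose — so, unlike in Lemma \ref{lemma:Md}, no auxiliary identity of index sets is needed. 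This exhibits the required isomorphism onto $\mathcal{N}^{d+1}_n$ and completes the proof.
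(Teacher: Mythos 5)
Your proposal is correct and follows essentially the same route as the paper, whose own proof of this lemma simply asserts that $(G^{\hspace{.1mm}d},{\mathcal T}_2)$ satisfies the hypotheses of Lemma \ref{lemma:construction} ``as in Lemmata \ref{lemma:neighborly} and \ref{lemma:pure}'' and then identifies the resulting complex with ${\mathcal N}^{\hspace{.1mm}d+1}_{n}$ via $i\mapsto a_i$. You in fact supply more detail than the paper does (the modified blocks $B'_{l,m}$, $C'_{l,m}$, the cardinality counts, and the observation that no reindexing identity is needed); the only step you leave compressed is the adaptation of the six-case intersection analysis, where the inequality in the Claim changes from $l\leq k$ to $k+l\leq d+3$ and the choice of $k$ in cases (e) and (f) must be adjusted accordingly, but this is exactly the kind of routine adaptation the paper itself waves at.
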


\begin{proof} Let $(G^{\hspace{.1mm}d}, {\mathcal T}_2)$ be as in Example \ref{example:Hd}.
It can be shown (as in Lemmata \ref{lemma:neighborly} and \ref{lemma:pure}), that $(G^{\hspace{.1mm}d},{\mathcal
T}_2)$ satisfies the hypothesis of Lemma \ref{lemma:construction}. Therefore, the simplicial complex
$N(G^{\hspace{.1mm}d})$ consisting of facets $\hat{v}$, $v\in V(G^{\hspace{.1mm}d})$, is a neighborly member of
$\overline{\mathcal K}(d+1)$. Again, $i \mapsto a_i$ defines an isomorphism from $N(G^{\hspace{.1mm}d})$ to
${\mathcal N}^{\hspace{.1mm}d+ 1}_{n}$. This proves the lemma.
\end{proof}

We know that $\mathbb{Z}_n$ acts vertex-transitively on each of ${\mathcal M}^{\hspace{.1mm}d+1}_{n}$, ${\mathcal
N}^{\hspace{.1mm}d+1}_{n}$, $M^{\hspace{.1mm}d}_{n}$ and $N^{\hspace{.1mm}d}_{n}$, respectively (see Lemma
\ref{lemma:aut(MdNd1)}). Here we prove

\begin{lemma}\label{lemma:aut(MdNd2)}
Let $\psi$ be the map given in $(\ref{eq:psi})$. Then ${\rm Aut}(M^{\hspace{.1mm}d}_{n}) = {\rm Aut}({\mathcal
M}^{\hspace{.1mm}d+1}_{n}) = \langle\psi\rangle = {\rm Aut}({\mathcal N}^{\hspace{.1mm}d+1}_{n}) = {\rm
Aut}(N^{\hspace{.1mm}d}_{n}) \cong\mathbb{Z}_n$.
\end{lemma}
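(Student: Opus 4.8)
The plan is to establish the two middle equalities $\mathrm{Aut}(\mathcal{M}^{\hspace{.1mm}d+1}_{n})=\langle\psi\rangle=\mathrm{Aut}(\mathcal{N}^{\hspace{.1mm}d+1}_{n})$ by a dual-graph analysis, and then transfer the conclusion to the boundaries $M^{\hspace{.1mm}d}_{n},N^{\hspace{.1mm}d}_{n}$. First observe that, since any automorphism of a pure complex restricts to one of its boundary, Lemma \ref{lemma:aut(MdNd1)} yields the chain $\langle\psi\rangle\subseteq\mathrm{Aut}(\mathcal{M}^{\hspace{.1mm}d+1}_{n})\subseteq\mathrm{Aut}(M^{\hspace{.1mm}d}_{n})$, and similarly for $\mathcal{N}$; so it suffices to collapse the outer groups to $\langle\psi\rangle$. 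As $\mathcal{M}^{\hspace{.1mm}d+1}_{n}\in\overline{\mathcal K}(d+1)$ is a pseudomanifold which is not a cone, Lemma \ref{lemma:dualgraph} embeds $\mathrm{Aut}(\mathcal{M}^{\hspace{.1mm}d+1}_{n})$ into $\mathrm{Aut}(\Lambda(\mathcal{M}^{\hspace{.1mm}d+1}_{n}))$, and by Lemma \ref{lemma:construction} I identify $\Lambda(\mathcal{M}^{\hspace{.1mm}d+1}_{n})$ with the graph $G^{\hspace{.1mm}d}$ of Example \ref{example:Gd}. Everything then reduces to understanding $\mathrm{Aut}(G^{\hspace{.1mm}d})$ and which of its elements are realized simplicially.

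Next I would analyze $\mathrm{Aut}(G^{\hspace{.1mm}d})$. The vertices $\sigma_i,\mu_i$ have degree $3$ and the $\alpha_{j,i}$ degree $2$, so any graph automorphism preserves these classes, maps the union of cycles $C_1\sqcup C_2$ (the degree-$3$ vertices) to itself, and maps rungs $P_i$ to rungs. The cycles cannot be interchanged: going around $C_1$, the feet on $C_2$ of rungs through consecutive vertices are $C_2$-distance $d+2$ apart, while going around $C_2$ the corresponding $C_1$-distance is $d+3$; since $d+2\neq d+3$ are both genuine cyclic distances in $\mathbb{Z}_n$ (as $d+2,d+3<n/2$), no isomorphism carries $C_1$ to $C_2$. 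Hence each automorphism fixes $C_1$ setwise and acts on it as $\sigma_i\mapsto\sigma_{\varepsilon i+t}$ with $\varepsilon=\pm1$, which, through the rungs, determines it on all of $G^{\hspace{.1mm}d}$. Thus the image of $\mathrm{Aut}(\mathcal{M}^{\hspace{.1mm}d+1}_{n})$ lies in the dihedral group generated by $\bar\psi$ (the case $\varepsilon=1$) and the reflection $\rho\colon i\mapsto -i$.

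The decisive step is to show no reflection is realized simplicially. Under the identification of Lemma \ref{lemma:construction}, a graph automorphism $\bar\varphi$ underlies a simplicial automorphism of $\mathcal{M}^{\hspace{.1mm}d+1}_{n}$ if and only if it permutes the tree family $\mathcal{T}_1=\{T_i\}$, i.e.\ $\bar\varphi(T_i)=T_{\pi(i)}$ for some permutation $\pi$; the rotations $\bar\psi^{\,t}$ do this since $T_{i+1}=\bar\psi(T_i)$, so only $\rho$ need be excluded. Applying $\rho$ to $T_0$ of (\ref{eq:inducedsets}), the $\sigma$-part becomes $\{\sigma_0,\sigma_{-1},\dots,\sigma_{-(d+1)}\}$, a run of $d+2$ consecutive indices, forcing $\rho(T_0)=T_{-(d+1)}$ if $\rho(T_0)\in\mathcal{T}_1$ at all. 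But the $\mu$-part of $\rho(T_0)$ contains $\mu_0$, whereas $\mu_0\in T_{-(d+1)}$ would require $j(d+3)\equiv d+1\ (\mathrm{mod}\ n)$ for some $0\le j\le d+1$; using $(d+2)(d+3)\equiv1$ this gives $j\equiv(d+1)(d+2)=d^{2}+3d+2\notin\{0,\dots,d+1\}$, a contradiction. Hence $\rho(T_0)\notin\mathcal{T}_1$, and since each $\bar\psi^{\,t}$ preserves $\mathcal{T}_1$ the same holds for every reflection $\bar\psi^{\,t}\rho$. So the image is exactly $\langle\bar\psi\rangle$ and, by injectivity, $\mathrm{Aut}(\mathcal{M}^{\hspace{.1mm}d+1}_{n})=\langle\psi\rangle\cong\mathbb{Z}_n$. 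The family $\mathcal{T}_2$ of Example \ref{example:Hd} has the same $\sigma$- and $\mu$-parts as $\mathcal{T}_1$, so the identical computation gives $\mathrm{Aut}(\mathcal{N}^{\hspace{.1mm}d+1}_{n})=\langle\psi\rangle$.

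Finally I would pass to the boundaries. For $d\geq4$, Corollary \ref{cor:auto} gives $\mathrm{Aut}(M^{\hspace{.1mm}d}_{n})=\mathrm{Aut}(\mathcal{M}^{\hspace{.1mm}d+1}_{n})=\langle\psi\rangle$ and similarly for $N^{\hspace{.1mm}d}_{n}$, closing the chain. For $d=3$, Lemma \ref{lemma:aut(M^3_29)} already yields $\mathrm{Aut}(M^{\hspace{.1mm}3}_{29})=\mathbb{Z}_{29}=\langle\psi\rangle$, which by the inclusions above forces all five groups to coincide. The case $d=2$ is not covered by Corollary \ref{cor:auto}, so for $M^{\hspace{.1mm}2}_{19}$ and $N^{\hspace{.1mm}2}_{19}$ I would argue directly as in Lemma \ref{lemma:aut(M^3_29)}: by vertex-transitivity it suffices to show the stabilizer of $a_0$ is trivial, which follows from its action on the $18$-cycle $\mathrm{lk}(a_0)$ by pinning down images of distinguished vertices via their degrees in the iterated link. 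I expect the main obstacle to be precisely the reflection computation of the third paragraph, namely showing $\rho$ ejects $T_0$ from the family through the relation $(d+2)(d+3)\equiv1\ (\mathrm{mod}\ n)$, together with the separate low-dimensional treatment needed for the boundary case $d=2$.
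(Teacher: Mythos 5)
Your argument for the two middle equalities is correct but takes a genuinely different route from the paper's. The paper works with the stabilizer of the vertex $a_0$: an automorphism $\beta$ fixing $a_0$ induces an automorphism of the tree $T_0\cong\Lambda({\rm st}(a_0))$, and the shape of $T_0$ then forces $\bar\beta$ to fix $C_1$ and $C_2$ pointwise; for ${\mathcal N}^{\hspace{.1mm}d+1}_{n}$ this needs extra care because $T_0\in{\mathcal T}_2$ admits an abstract symmetry swapping the $\sigma$- and $\mu$-sides, which the paper excludes by first proving $\bar\beta(T_i)=T_i$ for all $i$ and then examining the middle vertices $\alpha_{\lfloor(d+1)/2\rfloor,i}$. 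You instead determine ${\rm Aut}(G^{\hspace{.1mm}d})$ outright: it is contained in the dihedral group of order $2n$, since the two $n$-cycles of degree-$3$ vertices cannot be interchanged (the rung-feet of $C_1$-consecutive vertices lie at $C_2$-distance $d+2$, those of $C_2$-consecutive vertices at $C_1$-distance $d+3$, and both are genuine cyclic distances as $d+3<n/2$); you then characterize the simplicially realized graph automorphisms as exactly those permuting $\{V(T_i)\}$ (which follows from $\Lambda({\rm st}(i))\cong T_i$ together with Lemma \ref{lemma:dualgraph}), and kill the reflections by the computation $j(d+3)\equiv d+1 \Rightarrow j\equiv(d+1)(d+2)\notin\{0,\dots,d+1\}$. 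I checked these steps and they are sound. Your route treats ${\mathcal M}^{\hspace{.1mm}d+1}_{n}$ and ${\mathcal N}^{\hspace{.1mm}d+1}_{n}$ uniformly --- the cycle-swap exclusion at the graph level replaces the paper's separate, more delicate case analysis for ${\mathcal T}_2$ --- and yields ${\rm Aut}(G^{\hspace{.1mm}d})$ as a by-product; the paper's argument is more local and avoids classifying ${\rm Aut}(G^{\hspace{.1mm}d})$.

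The one genuine soft spot is the boundary case $d=2$. Your plan to imitate Lemma \ref{lemma:aut(M^3_29)} does not transfer: for a triangulated surface the link of $a_0$ is an $18$-cycle, so every vertex of ${\rm lk}(a_0)$ has degree $2$ there and the iterated links are pairs of points; there are no ``distinguished vertices via their degrees'' to pin down, and a priori the stabilizer of $a_0$ could be any subgroup of the dihedral group of order $36$ acting on that cycle. Some additional input is needed --- for instance an explicit verification that no nontrivial rotation or reflection of ${\rm lk}(a_0)$ extends to an automorphism of $M^{\hspace{.1mm}2}_{19}$ --- and this is exactly the point where the paper falls back on a machine check with simpcomp. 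Everything else in your passage to the boundary ($d\ge 4$ via Corollary \ref{cor:auto}, $d=3$ via Lemma \ref{lemma:aut(M^3_29)}) matches the paper.
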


\begin{proof}
Since $\psi$ is an automorphism of ${\mathcal M}^{\hspace{.1mm}d+1}_{n}$ (resp., ${\mathcal
N}^{\hspace{.1mm}d+1}_{n}$), it follows that ${\rm Aut}(M^{\hspace{.1mm}d}_{n}) \supseteq {\rm Aut}({\mathcal
M}^{\hspace{.1mm}d+1}_{n}) \supseteq \langle\psi\rangle \subseteq {\rm Aut}({\mathcal N}^{\hspace{.1mm}d+1}_{n})
\subseteq {\rm Aut}(N^{\hspace{.1mm}d}_{n})$.

Let $\beta \in {\rm Aut}({\mathcal M}^{\hspace{.1mm}d+1}_{n})$ and let $\bar{\beta}\in {\rm
Aut}(\Lambda({\mathcal M}^{\hspace{.1mm}d+1}_{n}))$ be the induced automorphism. If $\beta(a_0) = a_0$ then
$\beta({\rm lk}_{{\mathcal M}^{\hspace{.1mm}d+1}_{n}}(a_0)) = {\rm lk}_{{\mathcal M}^{\hspace{.1mm}d
+1}_{n}}(a_0)$ and hence $\bar{\beta}(T_0) = T_0$ and $\bar{\beta}$ is an automorphism of the tree $T_0$. Then
$\bar{\beta}(\sigma_i) = \sigma_i$, $\bar{\beta}(\mu_{i(d+3)}) = \mu_{i(d+3)}$ and $\bar{\beta}(\alpha_{j, 0}) =
\alpha_{j,0}$ for $0\leq i\leq d+1$, $1\leq j\leq d$. These imply $\bar{\beta}|_{C_1} = {\rm Id}$,
$\bar{\beta}|_{C_2} = {\rm Id}$ and this in term implies that $\bar{\beta}$ is the identity of ${\rm
Aut}(\Lambda({\mathcal M}^{\hspace{.1mm}d+1}_{n}))$. Then, by Lemma \ref{lemma:dualgraph}, $\beta$ is the
identity of ${\rm Aut}({\mathcal M}^{\hspace{.1mm}d+1}_{n})$. Thus the only automorphism of ${\mathcal
M}^{\hspace{.1mm}d+1}_{n}$ which fixes $a_0$ is the identity. Since $\langle\alpha\rangle$ is transitive on
$V({\mathcal M}^{\hspace{.1mm}d+1}_{n})$, this implies that $\langle\alpha\rangle = {\rm Aut}({\mathcal
M}^{\hspace{.1mm}d+1}_{n})$.

Let $\beta \in {\rm Aut}({\mathcal N}^{\hspace{.1mm}d+1}_{n})$. Let $\bar{\beta}\in {\rm Aut}(\Lambda({\mathcal
N}^{\hspace{.1mm}d+1}_{n}))$ be the induced automorphism. Then $\bar{\beta}(T_i) = T_j$, where $\beta(a_i) =
a_j$, for all $i$. If $\beta(a_0) = a_0$ then $\beta({\rm lk}_{{\mathcal N}^{\hspace{.1mm}d+1}_{n}}(a_0)) = {\rm
lk}_{{\mathcal N}^{\hspace{.1mm}d +1}_{n}}(a_0)$ and hence $\bar{\beta}(T_0) = T_0$ and $\bar{\beta}$ is an
automorphism of the tree $T_0$. This implies that $\bar{\beta}(\{\sigma_1, \dots, \sigma_{d+1}, \mu_{d+3}, \dots,
\mu_{(d+ 1)(d+ 3)}\}) = \{\sigma_1, \dots, \sigma_{d+1}, \mu_{d+3}, \dots, \mu_{(d+1)(d+3)}\}$. Since $T_0$ and
$T_1$ are the only trees which contain $\{\sigma_1, \dots,$ $\sigma_{d+1}, \mu_{d+3}, \dots, \mu_{(d+1)(d+3)}\}$,
it follows that $\bar{\beta}(T_1) = T_1$. Inductively, we get $\bar{\beta}(T_i) = T_i$ for all $i$. Since
$\bar{\beta} $ is an automorphism of $\Lambda({\mathcal N}^{\hspace{.1mm}d+1}_{n})$, $\bar{\beta}$ is an
automorphism of $T_i$ for all $i$. This implies that $\bar{\beta}(\alpha_{\lfloor (d+1)/2\rfloor,i}) =
\alpha_{\lfloor (d+1)/2\rfloor,i}$ or $\alpha_{\lceil (d+1)/2\rceil,i}$ for all $i$. Now, either $\bar{\beta}$ is
identity on $T_0$ or $\bar{\beta}(\sigma_j) = \mu_{j(d+3)}$ for $0\leq j\leq d+1$. In the second case,
$\bar{\beta}(\alpha_{\lfloor (d+1)/2\rfloor, 2}) = \alpha_{\lceil (d+1)/2\rceil,
2(d+3)}$. This is not possible
since $\bar{\beta}(\alpha_{\lfloor (d+1)/2 \rfloor, 2}) = 
\alpha_{\lfloor (d+1)/2\rfloor, 2}$ or $\alpha_{\lceil
(d+1)/2\rceil, 2}$. Therefore, $\bar{\beta}|_{T_0}$ is the identity. Now, by the similar argument as in the case
${\mathcal M}^{\hspace{.1mm}d+1}_n$ it follows that $\beta$ is the identity in ${\rm Aut}({\mathcal
N}^{\hspace{.1mm}d +1}_n)$ and ${\rm Aut}({\mathcal N}^{\hspace{.1mm}d+1}_n) = \langle\psi\rangle$. Thus, ${\rm
Aut}(M^{\hspace{.1mm}d}_{n}) \supseteq {\rm Aut}({\mathcal M}^{\hspace{.1mm}d+1}_{n}) = \langle\psi\rangle = {\rm
Aut}({\mathcal N}^{\hspace{.1mm}d+1}_{n}) \subseteq {\rm Aut}(N^{\hspace{.1mm}d}_{n})$.

If $d\geq 4$ then the result follows from Corollary \ref{cor:auto}. For $d=3$, the result follows from Lemma
\ref{lemma:aut(M^3_29)}. So, assume that $d=2$. Using simpcomp \cite{simpcomp}, we found that ${\rm
Aut}(M^{\hspace{.1mm}2}_{19}) \cong \mathbb{Z}_{19} \cong {\rm Aut}(N^{\hspace{.1mm}2}_{19})$. The result follows
from this for $d=2$.
\end{proof}

\begin{remark}\label{remark}
{\rm Observe that a tree in ${\mathcal T}_1$ (in Example \ref{example:Gd}) is non-isomorphic to a tree in
${\mathcal T}_2$ (in Example \ref{example:Hd}). Since a tree in ${\mathcal T}_1$ (resp., ${\mathcal T}_2$) is
isomorphic to the dual graph of the link of the corresponding vertex of ${\mathcal M}^{\hspace{.1mm}d+1}_n$
(resp., ${\mathcal N}^{\hspace{.1mm}d+1}_n$), it follows that ${\mathcal M}^{\hspace{.1mm}d+1}_n$ is
non-isomorphic to ${\mathcal N}^{\hspace{.1mm}d+1}_n$ for all $d\geq 2$. However, the dual graphs of  ${\mathcal
M}^{\hspace{.1mm}d+1}_n$ and ${\mathcal N}^{\hspace{.1mm}d+1}_n$ are isomorphic (both are isomorphic to the graph
$G^d$). Since ${\mathcal M}^{\hspace{.1mm}d+1}_n$ and ${\mathcal N}^{\hspace{.1mm}d+1}_n$ are non-isomorphic, by
Proposition \ref{prop:bagchidatta}, $M^{\hspace{.1mm}d}_n$ and $N^{\hspace{.1mm}d}_{n}$ are non-isomorphic for
$d\geq 4$. Notice that $M^{\hspace{.1mm}2}_{29}$ has edges (namely, $a_ia_{i+12}$) which are in seven facets.
But, $N^{\hspace{.1mm}3}_{29}$ does not have any such edge. Thus, $M^{\hspace{.1mm}3}_{29}$ and
$N^{\hspace{.1mm}3}_{29}$ are non-isomorphic. Using simpcomp \cite{simpcomp}, we found that
$M^{\hspace{.1mm}2}_{19}$ and $N^{\hspace{.1mm}2}_{19}$ are non-isomorphic. }
\end{remark}

\section{Proof of Theorem \ref{thm:main}} \label{sec:proof}

For $d\geq 2$ and $m\geq 2$, let $D^{d+1}_{m+d+1}$ be the stacked $(d+1)$-ball with vertex-set $\{1, 2, \dots,
m+d+1\}$ and facets $\{k, k+1, \dots, k+d+1\}$, $1\leq k\leq m$. Let $M = \partial D^{d+1}_{m+d+1}$, $A$ be the
$d$-simplex $\{1, 2, \dots, d+1\}$ and $B$ be the $d$-simplex $\{m+1, m+2, \dots, m + d+1\}$. Then $M^{\prime}:=
M \setminus \{A, B\}$ triangulates $I \times S^{\hspace{.2mm}d -1}$. Recall that, a bijection $\psi\colon A \to
B$ is called {\em admissible} if for each vertex $u\in A$ there does not exist $v\in V(M)$ such that both
$\{u,v\}$ and $\{\psi(u),v\}$ are edges in $M$ (see \cite{bd8}). If $m\leq 2d+2$ then $\{d+1, d+1+ \lceil
\frac{m}{2} \rceil\}$ and $\{d+1+\lceil \frac{m}{2} \rceil, j\}$ are edges in $M$ for $d+1+\lceil \frac{m}{2}
\rceil \neq j\in B$. Thus, existence of an admissible map implies that $m\geq 2d+3$.  On the other hand, if $m
\geq 3d+3$ then there is no common neighbour of $i$ and $j$ in $M$ for $i\in A$, $j\in B$ and hence any bijection
$\psi\colon A\to B$ is admissible. Let $\sigma$ be a permutation on the set $\{1, \dots, d+1\}$ (i.e., $\sigma
\in {\rm Sym}(d+1)$). Consider the bijection $\varphi_{\sigma} \colon A \to B$ given by $\varphi_{\sigma}(i) =
m+\sigma(i)$. Consider the quotient complexes $Y := D^{d+1}_{m+d+ 1}/ \varphi_{\sigma}$ and $X^d_m(\sigma) :=
M^{\prime}/ \varphi_{\sigma}$. Then $\partial Y = X^d_m(\sigma)$. If $\varphi_{\sigma}$ is admissible then
$X^d_m(\sigma) \in {\mathcal K}(d)$ and triangulates an $S^{\hspace{.2mm}d -1}$-bundle over $S^1$. The case when
$md$ is even of the following lemma was proved in Lemma 3.3 of \cite{bd8}.

\begin{lemma}\label{lemma:sphere-bundle}
For $d\geq 2$, let $X^d_m(\sigma)$ be as above, where $\varphi_{\sigma}$ is admissible. Then $X^d_m(\sigma)$ is
orientable if and only if either $md$ is even and $\sigma$ is an even permutation or $md$ is odd and $\sigma$ is
an odd permutation. $($In particular, $X^d_m({\rm Id})$ is orientable when $md$ is even and non-orientable when
$md$ is odd.$)$
\end{lemma}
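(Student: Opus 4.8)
The plan is to test orientability directly via coherent orientations of facets, since $X^d_m(\sigma)$ is a connected closed $d$-pseudomanifold (a handle addition on the sphere $M$) and hence is orientable precisely when one can orient all of its facets so that any two facets sharing a $(d-1)$-face induce opposite orientations on it. First I would fix a coherent orientation of the stacked ball $D^{d+1}_{m+d+1}$ by setting $[F_k]=(-1)^{d(k-1)}[k,k+1,\dots,k+d+1]$ for its facets $F_k=\{k,\dots,k+d+1\}$; a short check shows that coherence across the shared ridge $\{k+1,\dots,k+d+1\}$ of $F_k$ and $F_{k+1}$ forces exactly the recursion $\epsilon_{k+1}=(-1)^d\epsilon_k$ on the signs $\epsilon_k=(-1)^{d(k-1)}$, which these orientations satisfy. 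This orientation of $D^{d+1}_{m+d+1}$ induces a coherent orientation on the boundary sphere $M$, under which the two removed facets acquire the induced orientations $[A]=(-1)^{d+1}[1,2,\dots,d+1]$ (coming from $F_1$ by deleting $d+2$) and $[B]=(-1)^{d(m-1)}[m+1,m+2,\dots,m+d+1]$ (coming from $F_m$ by deleting $m$).

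Next I would pass to $M'=M\setminus\{A,B\}$. The coherent orientation of $M$ restricts to a coherent orientation of $M'$, and for each ridge $\rho\in\partial A$ the unique remaining facet on the far side induces on $\rho$ the orientation opposite to the one $[A]$ induces, and similarly along $\partial B$. When the two boundary spheres are glued by $\varphi_{\sigma}$, a ridge $\rho\in\partial A$ is identified with $\varphi_{\sigma}(\rho)\in\partial B$, and the resulting common face is then shared by the two far-side facets. Extending the orientation of $M'$ across all of these glued ridges is therefore consistent if and only if those two facets induce opposite orientations on each identified ridge, which—after cancelling the two far-side sign reversals—is exactly the condition that $\varphi_{\sigma}$ carry the boundary orientation of $(\partial A,[A])$ to the \emph{reverse} of that of $(\partial B,[B])$. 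In other words, $X^d_m(\sigma)$ is orientable if and only if $\varphi_{\sigma}$ sends $[A]$ to $-[B]$.

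Finally I would compute this sign. Since $\varphi_{\sigma}(i)=m+\sigma(i)$, the map sends $[A]$ to $(-1)^{d+1}[m+\sigma(1),\dots,m+\sigma(d+1)]=(-1)^{d+1}\,\mathrm{sgn}(\sigma)\,[m+1,\dots,m+d+1]$, while $-[B]=-(-1)^{d(m-1)}[m+1,\dots,m+d+1]$. Equating the two orientation classes gives $(-1)^{d+1}\mathrm{sgn}(\sigma)=-(-1)^{d(m-1)}$, that is, $\mathrm{sgn}(\sigma)=(-1)^{md}$. Hence $X^d_m(\sigma)$ is orientable exactly when $\sigma$ is even and $md$ is even, or $\sigma$ is odd and $md$ is odd, which is the claimed dichotomy, and specialising to $\sigma=\mathrm{Id}$ recovers the parenthetical assertion. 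I expect the one genuinely delicate step to be the bookkeeping in the second paragraph: justifying that gluing-compatibility translates precisely into ``$\varphi_{\sigma}$ reverses the induced boundary orientations,'' and keeping the two far-side reversals, the boundary-induced signs $(-1)^{d+1}$ and $(-1)^{d(m-1)}$, and the factor $\mathrm{sgn}(\sigma)$ all straight—rather than any conceptual obstacle.
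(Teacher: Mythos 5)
Your argument is correct and reaches the same reduction as the paper's proof --- orientability of $X^d_m(\sigma)$ is equivalent to $\varphi_\sigma$ carrying the boundary orientation of $A$ to the reverse of that of $B$ --- but you get there and finish differently. The paper writes down an explicit coherent orientation of $M'$ (signs $(-1)^{kd+l+1}$ on the facets $\delta_{k,l}$), invokes the product structure $|M'|\cong[0,1]\times|\partial B|$ together with $\partial|M'|=S_A\cup(-S_B)$ and a cited topological fact (tom Dieck, Steenrod) to translate orientability of the quotient into ``$|\varphi_\sigma|\colon S_A\to S_B$ is orientation preserving,'' then computes the sign $(-1)^{md}$ of $\varphi_{\rm Id}$ on individual oriented ridges and factors $\varphi_\sigma=\varphi_{\rm Id}\circ\sigma$. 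You instead orient the stacked ball $D^{d+1}_{m+d+1}$ (where coherence is a one-line recursion), push the orientation to $M=\partial D$ and restrict to $M'$, justify the gluing criterion purely combinatorially via incidence of the far-side facets along ridges of $A$ and $B$, and evaluate the sign in one step on the top simplices, getting the condition $\mathrm{sgn}(\sigma)=(-1)^{md}$ directly. What your route buys is self-containedness (no appeal to the topology of $I\times S^{d-1}$ or to the quotient-orientability citation) and a cleaner verification of coherence; what the paper's route buys is an explicit orientation of $M'$ that it reuses later in the proof of Lemma 5.2 to orient $M^d_n$ itself. Two small points you should make explicit: the dual graph of $M'$ is connected, so its coherent orientation is unique up to a global sign and it suffices to test the one you constructed; and the passage from ``opposite induced orientations on every identified ridge'' to the single condition $\varphi_\sigma([A])=-[B]$ uses that an orientation of a $d$-simplex is determined by the orientation it induces on any one facet and that $\varphi_\sigma$, being simplicial, commutes with taking induced boundary orientations. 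Neither is a gap, just bookkeeping worth recording.
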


\begin{proof}
For $1 \leq k \leq m$, $1\leq l \leq d$, let $\delta_{k, l} $ denote the $d$-simplex $\{k, k + 1, \dots, k + d +
1\} \setminus \{k + l\}$ of $M$. Since $|M^{\prime}|$ is homeomorphic to $[0, 1]\times |\partial B|$,
$M^{\prime}$ is orientable. Observe that the following defines an orientation on $|M^{\prime}|$. (Here $\partial
B = S^{\hspace{.2mm}d -1}_{d + 1}(B)$.)
\begin{eqnarray} \label{eq:+orientation}
+ \delta_{k, l} & = & (-1)^{kd + l+1} \langle k, k + 1, \dots, k + l - 1, k + l + 1, \dots, k + d + 1\rangle.
\end{eqnarray}
(To check that (\ref{eq:+orientation}) defines a coherent orientation, one can take any orientation on
$(d-1)$-simplices of $M^{\prime}$. In particular, one can take positively oriented $(d-1)$-simplices as given in
(\ref{eq:+orientation2}) below.)

We can choose an orientation on $|\partial B|$ so that the orientation on $|M^{\prime}|$ as the product $[0, 1]
\times |\partial B|$ is the same as the orientation given in (\ref{eq:+orientation}). This also induces an
orientation on $|\partial A|$. Let $S_B$ (resp., $S_A$) denote the oriented sphere $|\partial B|$ (resp.,
$|\partial A|$) with this orientation. Now, as the boundary of an oriented manifold, $\partial |M^{\prime}| = S_A
\cup (-S_B)$ (cf. \cite[pages 371--372]{td}). Therefore, $|M^{\prime}/\varphi_{\sigma}|=
|M^{\prime}|/|\varphi_{\sigma}|$ is orientable if and only if $|\varphi_{\sigma}| \colon S_A \to S_B$ is
orientation preserving (cf. \cite[pages 134--135]{st}). (Here, $|\varphi_{\sigma}| \colon |\partial A| \to
|\partial B|$ is the homeomorphism induced by $\varphi_{\sigma}$.)

Note that $(d-1)$-simplices of $M$ are $\delta_{k, i, j} = \{k, k+ 1, \dots, k + d + 1\} \setminus \{k + i, k +
j\}$, $0 \leq i < j \leq d + 1$, $(i, j) \neq (0, d+1)$, $1 \leq k \leq m$. Consider the orientation on the
$(d-1)$-skeleton of $M^{\prime}$ as\,:
\begin{eqnarray} \label{eq:+orientation2}
+ \delta_{k, i, j} &\!\!=\!\!& (-1)^{kd + i + j} \langle k, \dots, k + i - 1, k + i + 1, \dots, k + j - 1, k + j
+ 1, \dots\rangle.
\end{eqnarray}
Then $[\delta_{m, i+1}, \delta_{m, 0, i+1}] = -1$ (resp., $[\delta_{1, i}, \delta_{1, i, d+1}] = 1$) for $0\leq i
\leq d$. This implies that $|\partial B|$ (resp., $|\partial A|$) with orientation given in
(\ref{eq:+orientation2}) is $S_B$ (resp., $S_A$). (For $\beta = \{v_0, v_1, \dots, v_d\}\in M^{\prime}$ and
$\alpha = \{v_1, \dots, v_d\}\in\partial B$, if $[\beta, \alpha] = -1$ then $+\alpha = \langle v_1, \dots,
v_d\rangle$ with the orientation given in (\ref{eq:+orientation2}) $\Longleftrightarrow$ $+\beta = \langle v_1,
v_0, v_2, \dots, v_d \rangle$ with the orientation given in (\ref{eq:+orientation}) $\Longleftrightarrow$
$(\stackrel{\longrightarrow}{v_1v_0}$, $\stackrel{\longrightarrow}{v_1v_2}, \dots,
\stackrel{\longrightarrow}{v_1v_d})$ is the orientation of $|M^{\prime}|$ $\Longleftrightarrow$
$(\stackrel{\longrightarrow}{v_1v_2}, \dots, \stackrel{\longrightarrow}{v_1v_d})$ is the orientation of
$|\partial B|$ $\Longleftrightarrow$ $\langle v_1, v_2, \dots, v_d \rangle$ is positive in $S_B$.)

For $1\leq i\leq d+1$, consider the $(d-1)$-simplex $\delta_{1, i,d+1} = \{1, \dots, d+1\}\setminus\{i+1\}$ of
$\partial A$. Then $\varphi_{\rm Id}(\delta_{1,i,d+1}) = \{m+1, \dots, m+d+1\} \setminus \{m+i+1\}= \delta_{m,0,
i + 1}$. Therefore, from (\ref{eq:+orientation2}), $\varphi_{\rm Id}(+\delta_{1, i, d+1}) = (-1)^{md} \delta_{d,
0, i+ 1}$. Thus, $|\varphi_{\rm Id}| \colon S_A \to S_B$ is orientation preserving (resp., reversing) if $md$ is
even (resp., odd). Also $|\sigma| \colon S_A \to S_A$ is orientation preserving (resp., reversing) if $\sigma$ is
an even (resp., odd) permutation. Since $\varphi_{\sigma} = \varphi_{\rm Id} \circ \sigma$, it follows that
$|\varphi_{\sigma}| \colon S_A \to S_B$ is orientation preserving if and only if $md$ is even and $\sigma$ is an
even permutation or if $md$ is odd and $\sigma$ is an odd permutation. The lemma now follows.
\end{proof}

\begin{lemma}\label{lemma:orientability}
For $d\geq 2$ and $n=d^{\hspace{.3mm}2}+5d+5$, let $M^{\hspace{.2mm}d}_{n}$ and $N^{\hspace{.2mm}d}_{n}$ be as in
Examples $\ref{example:Md}$ and $\ref{example:Nd}$, respectively. Then $M^{\hspace{.2mm}d}_{n}$,
$N^{\hspace{.2mm}d}_{n}$ are orientable if $d$ is even and are non-orientable if $d$ is odd.
\end{lemma}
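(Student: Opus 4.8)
The plan is to reduce the orientability question for $M^{\hspace{.2mm}d}_n$ and $N^{\hspace{.2mm}d}_n$ to the already-established Lemma \ref{lemma:sphere-bundle}, which classifies orientability of the sphere-bundle triangulations $X^d_m(\sigma)$ in terms of the parity of $md$ and the parity of $\sigma$. The key observation is that both $M^{\hspace{.2mm}d}_n$ and $N^{\hspace{.2mm}d}_n$ are obtained from a stacked sphere by a sequence of combinatorial handle additions (as exhibited explicitly for $d=3$ in Lemma \ref{lemma:|(M^3_29|}), and more conceptually via the quotient description $B/\!\sim\ \cong {\mathcal M}^{\hspace{.1mm}d+1}_n$ noted after that proof). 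First I would argue that orientability is a topological invariant, so it suffices to understand the homeomorphism type of the geometric carrier: by Proposition \ref{prop:kalai}, a member of $\Kd$ obtained by $\beta_1$ handle additions triangulates a connected sum of copies of $S^{\hspace{.1mm}d-1}\!\times S^1$ (orientable case) or $\TPSSD$ (the twisted, non-orientable bundle), and such a connected sum is orientable precisely when each summand is. Thus the whole question collapses to determining the orientability of a \emph{single} handle, i.e. of one $S^{\hspace{.1mm}d-1}$-bundle over $S^1$ of the form $X^d_m(\sigma)$.

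Concretely, I would isolate one combinatorial handle addition appearing in the construction and identify the relevant bijection $\psi$ between the two boundary $d$-simplices being glued. The handle maps $\psi_i$ written down in Lemma \ref{lemma:|(M^3_29|} (e.g. $\psi_i(u_{i-4})=a_{i-4}$, $\psi_i(v_{i-3})=a_{i-3}$, $\psi_i(w_{i-2})=a_{i-2}$, $\psi_i(b_i)=a_i$) are \emph{order-preserving} identifications of the vertices of $\alpha_i$ with those of $\beta_i$, so each corresponds to $\sigma = {\rm Id}$ in the language preceding Lemma \ref{lemma:sphere-bundle}. Invoking the parenthetical conclusion of that lemma—$X^d_m({\rm Id})$ is orientable when $md$ is even and non-orientable when $md$ is odd—I would then check the parity of $md$ for the specific tube length $m$ arising here. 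Since the handles are built on consecutive blocks of $d+2$ vertices (the simplices $\sigma_i$ have $d+2$ vertices and the gluing identifies a $d$-face with a $d$-face shifted by a multiple of the period), the relevant $m$ is even, so the parity of $md$ matches the parity of $d$; hence each handle is orientable iff $d$ is even.

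The main obstacle, and where I would spend the most care, is not the parity bookkeeping but justifying that orientability of the full manifold is governed by a \emph{single} representative handle. For this I would lean on the fact that $\langle\psi\rangle \cong \mathbb{Z}_n$ acts vertex-transitively (Lemma \ref{lemma:aut(MdNd1)}), so all $n$ of the ``$\psi_i$-type'' handles are carried to one another by symmetries and are therefore mutually isomorphic; consequently they all have the same orientability character, and since that common character is ``orientable iff $d$ even,'' the connected sum inherits exactly this dichotomy. An alternative, cleaner route that avoids re-deriving the handle structure in general $d$ would be to directly exhibit $M^{\hspace{.2mm}d}_n$ (resp.\ $N^{\hspace{.2mm}d}_n$) as a quotient $X^d_m(\sigma)$-type object, or to appeal to the structural description $B/\!\sim\ \cong {\mathcal M}^{\hspace{.1mm}d+1}_n$ together with $M^{\hspace{.2mm}d}_n = \partial {\mathcal M}^{\hspace{.1mm}d+1}_n$; I would verify that passing to the boundary does not change the orientability of the handle identification, because the gluing map on the boundary spheres is the restriction of the same order-preserving bijection $\psi$. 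With the parity computation and the transitivity argument in hand, the conclusion that $M^{\hspace{.2mm}d}_n$ and $N^{\hspace{.2mm}d}_n$ are orientable exactly when $d$ is even follows immediately from Lemma \ref{lemma:sphere-bundle}.
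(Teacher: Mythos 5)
Your reduction works for the non-orientable half but has genuine gaps in the orientable half. The non-orientable direction is essentially the paper's argument: the sub-tube $E_1$ built from the facets $\sigma_0,\dots,\sigma_{n-1}$ has boundary isomorphic to $X^d_n(\mathrm{Id})$, Lemma \ref{lemma:sphere-bundle} makes this non-orientable exactly when $nd$ is odd, and since $n=d^{\hspace{.3mm}2}+5d+5$ is always odd this happens exactly when $d$ is odd; a manifold obtained from a non-orientable one by further handle additions stays non-orientable. Note, however, that your parity bookkeeping is backwards: you assert that ``the relevant $m$ is even, so the parity of $md$ matches the parity of $d$,'' but if $m$ were even then $md$ would always be even and every handle would be orientable for every $d$, contradicting the very statement you are proving. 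What is actually needed, and what the paper uses, is that the relevant tube length $m=n$ is \emph{odd}.

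For the orientable case ($d$ even) the argument does not go through as written. First, there are $\beta_1=n+1$ handles, so the $\mathbb{Z}_n$-action cannot carry every handle to every other one: the two handles closing the cycles $C_1$ and $C_2$ are of a different type from the $n-1$ path handles joining the $a$-tube to the $b$-comb, and your transitivity argument leaves them untreated. Second, Lemma \ref{lemma:sphere-bundle} only computes the sign for a handle joining the two ends of a \emph{linear} tube $D^{d+1}_{m+d+1}$; for the path-type handles $\psi_i$ of Lemma \ref{lemma:|(M^3_29|} the two identified simplices sit at the end of a tooth of the comb $B_2$ and partway along $B_1$, so that lemma does not directly identify the relevant $m$ or the sign, and ``$\sigma=\mathrm{Id}$ because the vertex identification is order-preserving'' does not by itself decide orientability --- the tube-length parity enters, as Lemma \ref{lemma:sphere-bundle} itself shows. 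The paper sidesteps all of this in the even case by a direct construction: it decomposes $M^{\hspace{.1mm}d}_{n}$ into $\partial E_1$, $\partial E_2$ and the $n$ tubes $\partial F_i$, writes explicit signed orderings (\ref{+E1}), (\ref{+E2}), (\ref{+Fi}) on every facet, and verifies via the incidence relations (\ref{E1&Fi}) and (\ref{E2&Fi}) that these local orientations agree across every common $(d-1)$-face, yielding a global coherent orientation. If you wish to keep the connected-sum reduction, you must still carry out a separate sign computation for each of the distinct handle types; the symmetry argument alone does not supply those missing checks.
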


\begin{proof}
We present a proof for $M^{\hspace{.2mm}d}_{n}$. Similar arguments work for $N^{\hspace{.2mm}d}_{n}$. Let
${\mathcal M}^{\hspace{.2mm}d +1}_{n}$ be as in Example \ref{example:Md}. Let $E_1$ (resp., $E_2$) be the pure
$(d+1)$-dimensional subcomplex of ${\mathcal M}^{\hspace{.2mm}d +1}_{n}$ whose facets are $\sigma_0 \dots,
\sigma_{n -1}$ (resp., $\mu_0 \dots, \mu_{n-1}$). So, $\Lambda(E_i) = C_i$, $1\leq i\leq 2$.

Clearly, $E_1$ is isomorphic to the pseudomanifold $D^{d + 1}_{n + d + 1}/\varphi_{\rm Id}$, where
$D^{d+1}_{n+d+1}$ is the stacked $(d+1)$-ball defined at the beginning of this section. Thus, $\partial E_1$ is
isomorphic to $X^d_{n}(\rm Id)$. Therefore, $\partial E_1$ triangulates an $S^{\hspace{.2mm}d- 1}$-bundle over
$S^1$ and, by Lemma \ref{lemma:sphere-bundle}, is orientable if and only if $dn$ is even. Thus (since $n$ is
odd), $\partial E_1$ is orientable if and only if $d$ is even. So, if $d$ is odd then $\partial E_1$ is
non-orientable and hence (since $|M^{\hspace{.2mm}d}_{n}|$ can be obtained from $|\partial E_1|$ by attaching
handles) $M^{\hspace{.2mm}d}_{n}$ is non-orientable.

Again, the bijection $f \colon \mathbb{Z}_{n} \to V(E_2)$ given by $f(i) = a_{(d+3)i}$ defines an isomorphism
between $D^{d+1}_{n+d+ 1}/\varphi_{\rm Id}$ and $E_2$. Thus, $\partial E_2$ is isomorphic to $X^d_{n}(\rm Id)$.
Therefore, $\partial E_2$ triangulates an $S^{\hspace{.2mm}d- 1}$-bundle over $S^1$ and, by Lemma
\ref{lemma:sphere-bundle}, orientable if and only if $d$ is even.

For $0\leq i\leq n-1$, let $F_i$ be the stacked $(d+ 1)$-ball whose facets are $\alpha_{1, i}, \dots, \alpha_{d,
i}$. Then, ${\mathcal M}^{\hspace{.2mm}d +1}_{n} = E_1\cup E_2 \cup (\cup_{i= 0}^{n-1} F_i)$ and
$M^{\hspace{.2mm}d}_{n}$ is obtained from $\partial E_1 \cup \partial E_2$ by attaching handles $\partial
F_i\setminus \{A_i, B_i\}$, where $A_i = a_{i -d- 1} \cdots a_{i-2}a_{i}$ and $B_i = a_{i+d+2} a_{i+ 2(d+3)-1}
\cdots a_{i+ d(d +3)-1}a_{i}$, $0 \leq i\leq n-1$ (additions are modulo $n$).

Now, assume that $d$ is even. So, $\partial E_1$, $\partial E_2$, $\partial F_i$, $0\leq i\leq n -1$, are
orientable. Consider the orientation on $\partial E_1$, $\partial E_2$ and $\partial F_i$, $0\leq i\leq n-1$,
given by\,:
\begin{subequations}
\begin{align}
+\sigma_{i,l} =  & \, (-1)^l\langle a_{i-d-1}, \dots,
a_{i-d-2+l}, a_{i-d+l}, \dots a_{i}\rangle, \label{+E1}
\\
+\mu_{i,l} =  & \, (-1)^{l+1}\langle a_{i+(d+2)}, \dots, a_{i+l(d+3)-1},a_{i+(l+2)(d+3)-1}, \dots,
a_{i+(d+1)(d+3)-1},
a_i\rangle, \label{+E2}
\\
+\alpha_{k,i,l} =  & \, (-1)^l \langle b_{k,i,1}, \dots, b_{k,i,l}b_{k,i,l+2}, \cdots b_{k,i,d+2}\rangle,
\label{+Fi}
\end{align}
\end{subequations}
where $(b_{k,i,1}, b_{k,i,2}, \dots, b_{k,i,d+2}) = (a_{i-2-d+k}, \dots, a_{i-2}a_{i}a_{i+(d+2)} a_{i+2(d+3)-1},
\dots, a_{i+k(d +3) -1})$, for $1\leq k\leq d$, $0\leq l\leq d+1$. From the proof of Lemma
\ref{lemma:sphere-bundle}, (\ref{+E1}) (resp., (\ref{+E2})) defines an orientation on $\partial E_1$ (resp.,
$\partial E_2$). Also, (\ref{+Fi}) defines an orientation on $\partial F_i$, $0 \leq i\leq n-1$.

Observe that $A_i = \sigma_{i,d} = \alpha_{1,i,d+1}$ and $+\sigma_{i,d}= -\alpha_{1,i,d+1}$. Also, $B_i =
\mu_{i,d} = \alpha_{d,i,0}$ and $+\mu_{i,d} = -\alpha_{d,i,0}$. Now, let $\gamma$ be a $(d-1)$-face of $A_i$. Let
$\gamma_{E_1}$ (resp., $\gamma_{F_i}$) be the $d$-face of $\partial E_1$ (resp., $\partial F_i$) other than $A_i$
which contains $\gamma$. Then (with any orientation of $\gamma$)
\begin{eqnarray}
[\gamma_{E_1}, \gamma] = -[\sigma_{i,d}, \gamma] = [\alpha_{1, i,d+1}, \gamma] = - [\gamma_{F_i}, \gamma].
\label{E1&Fi}
\end{eqnarray}
Similarly, if $\beta$ is a $(d-1)$-face of $B_i$ and $\beta_{E_2}$ (resp., $\beta_{F_i}$) is the $d$-face of
$\partial E_2$ (resp., $\partial F_i$) other than $B_i$ which contains $\beta$. Then (with any orientation of
$\beta$)
\begin{eqnarray}
[\beta_{E_2}, \beta] = -[\mu_{i,d}, \beta] = [\alpha_{d,i,0}, \beta] = - [\beta_{F_i}, \beta]. \label{E2&Fi}
\end{eqnarray}
Since
$$
M^{\hspace{.1mm}d}_{n} = (\partial E_1 \setminus \{A_0, \dots, A_{n-1}\}) \cup (\partial E_2 \setminus \{B_0,
\dots, B_{n-1}\}) \cup ({\displaystyle \bigcup_{i=0}^{n-1}} (\partial F_i\setminus\{A_i, B_i\})),
$$
it follows from (\ref{E1&Fi}) and (\ref{E2&Fi}) that the orientations defined by (\ref{+E1}), (\ref{+E2}) and
(\ref{+Fi}) give a coherent orientation on $M^{\hspace{.1mm}d}_{n}$. Thus, $M^{\hspace{.1mm}d}_{n}$ is
orientable. This completes the proof.
\end{proof}

We now in a position to prove our main result.

\begin{proof}[{\it Proof of Theorem \ref{thm:main}}]
Let $d\geq 2$ and $n = d^{\hspace{.3mm}2}+5d+5$. \begin{description} \item[{\rm Part (a):}] From Lemmata
\ref{lemma:Md} and \ref{lemma:Nd}, ${\mathcal M}^{\hspace{.1mm}d+1}_{n}$ and ${\mathcal N}^{\hspace{.1mm}d
+1}_{n}$ are neighborly members of $\overline{\mathcal K}(d+1)$. This implies (by Proposition
\ref{prop:bagchidatta} and (\ref{eq:skel})) that $M^{\hspace{.1mm}d}_{n}$ and $N^{\hspace{.1mm}d}_{n}$ are
neighborly members of $\Kd$. Also, $M^{\hspace{.1mm}d}_{n}$ and $N^{\hspace{.1mm}d}_{n}$ are non-isomorphic (see
Remark \ref{remark}).

\item[{\rm Part (b):}] If $d\neq 3$ then tightness follows from Proposition \ref{prop:ef}. Since
$M^{\hspace{.1mm}3}_{29} = \partial {\mathcal M}^{\hspace{.1mm}4}_{29}$ and $N^{\hspace{.1mm}3}_{29} = \partial
{\mathcal N}^{\hspace{.1mm}4}_{29}$, tightness follows from Proposition \ref{prop:tight3} for $d=3$.

\item[{\rm Part (d):}] If $d\geq 4$ then the result follows from
Corollary \ref{prop:lss}.
If $d=3$ then the result follows from Proposition \ref{prop:tight3}.

\item[{\rm Part (c):}] The result follows from Part (d).

\item[{\rm Part (e):}] If $d\geq 4$ then the result follows from Proposition \ref{prop:kalai} and Lemma
\ref{lemma:orientability}.  Since $M^{\hspace{.1mm}2}_{19}$ and $N^{\hspace{.1mm}2}_{19}$ are orientable (by
Lemma \ref{lemma:orientability}) and neighborly, $\beta_1(M^{\hspace{.1mm}2}_{19}; \mathbb{Z}) =
\beta_1(N^{\hspace{.1mm}2}_{19}; \mathbb{Z}) = 2- (19 - \binom{19}{2} + \frac{2}{3}\binom{19}{2}) = 40$ and hence
$M^{\hspace{.1mm}2}_{19}$ and $N^{\hspace{.1mm}2}_{19}$ both triangulate $(S^1 \times S^1)^{\#20}$. This proves
the result for $d=2$. If $d=3$ then the result follows from  Lemmata \ref{lemma:|(M^3_29|} and 
\ref{lemma:orientability}.

\item[{\rm Part (f):}] By Part (b) and Proposition \ref{prop:minimal},
$M^{\hspace{.1mm}d}_{n}$ and $N^{\hspace{.1mm}d}_{n}$ are strongly minimal
for $d\geq 2$.

\item[{\rm Part (g):}] The result follows from Lemma \ref{lemma:aut(MdNd1)}.
\end{description}
This completes the proof of the theorem.
\end{proof}


\section{Summary: Known neighborly members of \boldmath{${\mathcal K}(d)$}}

Any triangulated 2-manifold is a member of ${\mathcal K}(2)$. In Table \ref{tbl:tbl1}, we
summarize the known and some open cases for neighborly members of Walkup's class ${\mathcal K}(d)$ for $d\geq 3$.


\begin{table}[h]
\centering {\small
\begin{tabular}{|c|c|c|c|c|c|}
\hline
&&&&&\\[-3mm]
$\beta_1(K)$ & $d$ & $n$ & $K$ & $|K|$ & References\\[1mm]
\hline
&&&&&\\[-3mm]
0 & $d$ & $d+2$ & $S^{\hspace{.1mm}d}_{d+2}$ &
$S^{\hspace{.1mm}d}$ & \\[1mm]
1 & $d$ even & $2d+3$ & $K^{d}_{2d+3}$ &
$S^{d-1}\times S^1$ & \cite{ku86} \\[1mm]
1 & $d$ odd & $2d+3$ & $K^d_{2d+3}$ & $\TPSSD$ &
\cite{ku86} \\[1mm]
2 & $d\geq 4$ & $-$  & Not possible & &\cite{si1} \\[1mm]
3 & 4 & 15 & $M^4_{15}$ & $(\TPSS)^{\#3}$ & \cite{bd10} \\[1mm]
3 & 4 & 15 & $N^4_{15}$ & $(S^3\times S^1)^{\#3}$ & \cite{si2} \\[1mm]
5 & 5 & 21 & ? & &\cite{ef}\\[1mm]

7 & 4 & 20 & ? & &\cite{ku95}\\

8 & 4 & 21 & $M^4_{21}$ & $(S^3\times S^1)^{\#8}$ &\cite{bdns1} \\[1mm]

8 & 4 & 21 & $N^4_{21}$ & $(\TPSS)^{\#8}$ &\cite{bdns1} \\[1mm]

14 & 4 & 26 & $N^4_{26}$ & $(\TPSS)^{\#14}$ &\cite{bdns1} \\[1mm]

$d^2+5d+6$ & $d$ even & $d^2+5d+5$ & $M^{d}_{d^2+5d+5}$ &
$(S^{d-1}\times S^1)^{\#\beta_1}$ & this paper\\[1mm]

$d^2+5d+6$ & $d$ odd & $d^2+5d+5$ & $M^d_{d^2+5d+5}$ &
$(\TPSSD)^{\#\beta_1}$ & this paper\\[1mm]
\hline
\end{tabular}
} \caption{Known and some open cases for neighborly members of ${\mathcal K}(d)$, $d\geq 3$} \label{tbl:tbl1}
\end{table}

\bigskip

\noindent {\bf Acknowledgement:}
This work is supported in part by UGC Centre for Advanced Studies.
The second author thanks IISc Mathematics Initiative for support.
The authors thank Bhaskar Bagchi and the anonymous
referees for many useful comments.



{\footnotesize

 }

\end{document}